\documentclass[a4paper,10pt,leqno]{amsart}
\title{Lehmer's Problem for arbitrary groups}
\author{L\"uck, W.}
        \address{Mathematicians Institut der Universit\"at Bonn\\
                Endenicher Allee 60\\
                53115 Bonn, Germany}
         \email{wolfgang.lueck@him.uni-bonn.de}
          \urladdr{http://www.him.uni-bonn.de/lueck}
         \date{March, 2020}
              \keywords{Lehmer's problem, Fuglede-Kadison determinants}
     \subjclass[2000]{46L99, 11R06}

%%%%%%%%%%%%%%%%%%%%%%%%%%%%%%%%%%%%%%%%%%%%%%%%%%%%%%%%%%%%%%%%%%
%                      verwendete Pakete

\usepackage{hyperref}
\usepackage{color}
\usepackage{pdfsync}
\usepackage{calc}
\usepackage{enumerate,amssymb}
\usepackage[arrow,curve,matrix,tips,2cell]{xy}
  \SelectTips{eu}{10} \UseTips
  \UseAllTwocells

\DeclareMathAlphabet\EuR{U}{eur}{m}{n}
\SetMathAlphabet\EuR{bold}{U}{eur}{b}{n}

\makeindex             % used for the subject index
                       % please use the style sprmidx.sty with
                       % your makeindex program

%-------------------------------  new commands  ---------------------

%%%%%%%%%%%%%%%%%% Zaehler %%%%%%%%%%%%%%%%%%%%%%%%

%\newtoks\theorembodyfont
%\theorembodyfont{\slshape}
\theoremstyle{plain}
\newtheorem{theorem}{Theorem}[section]
\newtheorem{lemma}[theorem]{Lemma}

\newtheorem{conjecture}[theorem]{Conjecture}
\newtheorem{problem}[theorem]{Problem}

\theoremstyle{definition}

\newtheorem{definition}[theorem]{Definition}
\newtheorem{example}[theorem]{Example}

\newtheorem{remark}[theorem]{Remark}

{\catcode`@=11\global\let\c@equation=\c@theorem}

% Hier werden Gleichungen und Theoreme zusammen gezaehlt.
%Soll ein anderer Zaehler statt theorem verwendet werden
%(entspr. dem \newtheorem-Befehl), muss 2-mal theorem
%durch diesen Zaehler ersetzt werden. (Die Zeilen entsprechen
% der Zaehlung von \newtheorem{equation}[theorem]).

%\renewcommand{\labelenumi}{(\roman{enumi})}
%\renewcommand{\labelenumii}{(\alph{enumii})}

%\renewcommand{\theenumi}{(\roman{enumi})}
%\renewcommand{\theenumii}{(\alph{enumii})}

% ------------------------------   diagrams ------------------------

\newcommand{\comsquare}[8]                   % Produces a commutative square
{\begin{CD}
#1 @>#2>> #3\\
@V{#4}VV @V{#5}VV\\
#6 @>#7>> #8
\end{CD}
}

\newcommand{\xycomsquare}[8]                   % kommutatives Quadrat (xy-Version)
{\xymatrix
{#1 \ar[r]^{#2} \ar[d]^{#4} &
#3 \ar[d]^{#5}  \\
#6\ar[r]^{#7} &
#8
}
}

\newcommand{\xycomsquareminus}[8]                      % kommutatives Quadrat (xy-Version)
{\xymatrix{#1 \ar[r]^-{#2} \ar[d]^-{#4} &
#3 \ar[d]^-{#5}  \\
#6\ar[r]^-{#7} &
#8
}
}

%----------------------------- families of subgroups ------------------------

%\newcommand{\calelm}{\mathcal{ELM}}
\newcommand{\cala}{\mathcal{A}}
\newcommand{\calb}{\mathcal{B}}
\newcommand{\cald}{\mathcal{D}}

%\newcommand{\calicyc}{\mathcal{ICYC}}

%%%%%%%%%%%%%%%%%%%%%%%%%%%%%%%%  calligraphic %%%%%%%%%%%%%%%%%%%%%%%%%%%%%%%%%%

\newcommand{\caln}{{\mathcal N}}
\newcommand{\calu}{{\mathcal U}}

%%%%%%%%%%%%%%%%%%%%%%%%%%%%%%%%   mathbb  %%%%%%%%%%%%%%%%%%%%%%%%

\newcommand{\IC}{{\mathbb C}}

\newcommand{\IF}{{\mathbb F}}

\newcommand{\IQ}{{\mathbb Q}}
\newcommand{\IR}{{\mathbb R}}

\newcommand{\IZ}{{\mathbb Z}}

%%%%%%%%%%%%%%%%%%%%%%%%%%%%%%%  mathbf %%%%%%%%%%%%%%%%%%%%%%%%%%%%%%%%%%%

%%%%%%%%%%%%%%%%%%%%%%%%%%%    categories %%%%%%%%% %%%%%%%%%%%%%%%%%%%%%

%%%%%%%%%%%%%%%%%%%%%%%%%%% operatornames %%%%%%%%%%%%%%%%%%%%%%%%%%%%%%%%%%%%%%%

\newcommand{\colim}{\operatorname{colim}}

\newcommand{\id}{\operatorname{id}}

\newcommand{\im}{\operatorname{im}}

\newcommand{\tr}{\operatorname{tr}}

\newcommand{\vol}{\operatorname{vol}}

%  ------------------------ Miscellaneous newcommands ----------------

%%%%%%%%%%%%%%%%%%%%%%%%%% Miscellaneous newcommands with argument  %%%%%%%%%%%%%%%%%%%%%%%%%%

                   %classifying space of a family
              %Eunderbar G = classifying space for proper G-actions
              %Junderbar G = classifying space for proper G-actions
 %Edoubleunderbar G = classifying space for virtually cyclic subgroups
                %orbit category
              %Sub-category

\newcommand{\higherlim}[3]{{\setbox1=\hbox{\rm lim}
        \setbox2=\hbox to \wd1{\leftarrowfill} \ht2=0pt \dp2=-1pt
        \mathop{\vtop{\baselineskip=5pt\box1\box2}}
        _{#1}}^{#2}#3}

\newcommand{\version}[1]                       %marks the date of last editing and compilation
{\begin{center} last edited on #1\\
last compiled on \today\\
name of texfile: \jobname
\end{center}
}

\newcounter{commentcounter}

\newcommand{\squarematrix}[4]{\left( \begin{array}{cc} #1 & #2 \\ #3 &
#4
\end{array} \right)
}

%----------------------------------------------------------------------

\begin{document}

\typeout{----------------------------  lehmer_general.tex  ----------------------------}

%%%%%%%%%%%%%%%%%%%%%%%%%%%%%%%%%%%%%%%%%%%%%%%%%%%%%%%%%%%%%%%%%%%%%%%%%%%%%%%%%
%%%%%%%%%%%%%%%%%%%%%%%%%%%%%%%%%%% Abstract  %%%%%%%%%%%%%%%%%%%%%%%%%%%%%%%%%%%%%%%
%%%%%%%%%%%%%%%%%%%%%%%%%%%%%%%%%%%%%%%%%%%%%%%%%%%%%%%%%%%%%%%%%%%%%%%%%%%%%%%%%

\typeout{------------------------------------ Abstract ----------------------------------------}

\begin{abstract}
  We consider the problem whether for a group $G$ there exists a constant $\Lambda(G) >  1$ 
  such that for any $(r,s)$-matrix $A$ over the integral group ring $\IZ G$ the
  Fuglede-Kadison determinant of the $G$-equivariant bounded operator $L^2(G)^r \to
  L^2(G)^s$ given by right multiplication with $A$ is either one or greater or equal to
  $\Lambda(G)$. If $G$ is the infinite cyclic
  group and we consider only $r = s = 1$, this is precisely Lehmer's problem.
\end{abstract}

\maketitle

%%%%%%%%%%%%%%%%%%%%%%%%%%%%%%%%%%%%%%%%%%%%%%%%%%%%%%%%%%%%%%%%%%%%%%%%%%%%%%%%%
%%%%%%%%%%%%%%%%%%%%%%%%%%%%%%%%%% Introduction %%%%%%%%%%%%%%%%%%%%%%%%%%%%%%%%%%%%%
%%%%%%%%%%%%%%%%%%%%%%%%%%%%%%%%%%%%%%%%%%%%%%%%%%%%%%%%%%%%%%%%%%%%%%%%%%%%%%%%%

 \typeout{-------------------------------   Section 0: Introduction --------------------------------}

\setcounter{section}{-1}
\section{Introduction}

Lehmer's problem is the question whether the Mahler measure of a polynomial with
integer coefficients is either one or bounded from below by a fixed constant $\Lambda >1$. 
If one views the polynomial as an element in the integral group ring $\IZ[\IZ]$ ring of
$\IZ$, then its Mahler measure agrees with the Fuglede-Kadison determinant of the
$\IZ$-equivariant bounded operator $r_p^{(2)} \colon L^2(\IZ) \to L^2(\IZ)$ given by right
multiplication with $p$. This suggests to consider for any group  $G$ the following generalization.

Let $A$ be an $(r,s)$-matrix over the integral group ring $\IZ G$. We propose to study
the problem whether there is a constant $\Lambda(G) > 1$ such that the Fuglede-Kadison
determinant of $r^{(2)}_A \colon L^2(G)^r \to L^2(G)^s$ is either one or larger or equal
to $\Lambda(G)$. If we only allow $r = s = 1$, we denote such a constant by
$\Lambda_1(G)$.  If we consider only the case $r = s$ or the case $r = s =1$ and
additionally require that $r^{(2)}_A \colon L^2(G)^r \to L^2(G)^r$ is a weak isomorphism,
or,  equivalently, is injective, we denote such a constant by $\Lambda^w(G)$ or
$\Lambda_1^w(G)$. Lehmer's problem is equivalent to the question whether
$\Lambda_1(\IZ) > 1$ holds.

For obvious reasons we have $\Lambda(G) \le \Lambda^w(G)$,
$\Lambda_1(G) \le \Lambda_1^w(G)$, $\Lambda(G) \le \Lambda_1(G)$, and
$\Lambda^w(G) \le \Lambda_1^w(G)$.  Since for a group $G$ which contains $\IZ$ as a
subgroup we have $\Lambda_1^w(G) \le \Lambda_1^w(\IZ) = \Lambda_1(\IZ)$ by
Lemma~\ref{lem:elementary_properties_of_Lambda(G)}~%
\eqref{lem:elementary_properties_of_Lambda(G):subgroups} and
Theorem~\ref{the:Finitely_generated_free_abelian_groups}, we see that a counterexample to
Lehmer's problem would imply
$\Lambda(G) = \Lambda^w(G) = \Lambda_1(G) = \Lambda_1^w(G) =1$ for any group which
contains $\IZ$ as subgroup. Hence all the discussions in this paper are more or less void
if a counterexample to Lehmer's problem exists which is not known and fortunately not
expected to be true.

If there is no upper bound on the order of  finite subgroups of $G$, then $\Lambda^w_1(G) = 1$
by Remark~\ref{rem:bound_on_the_order_of_finite_subgroups}. Indeed, 
 there is a  finitely presented group $G$ with 
$\Lambda(G) =   \Lambda^w(G) = \Lambda_1(G) = \Lambda^w_1(G) = 1$, 
see Example~\ref{exa:lamplighther_group}. Therefore we will concentrate on torsionfree groups.

The most optimistic scenario would be  that for any torsionfree group $G$ all the constants
$\Lambda(G)$, $\Lambda^w(G)$, $\Lambda_1(G)$ and $\Lambda_1^w(G)$ are conjectured to be the
Mahler measure $M(L)$ of Lehmer's polynomial
$L(z) := z^{10} + z^9 - z^7 - z^6 -z^5 -z^4- z^3 + z +1$. But this is not the case in
general, there is a hyperbolic closed $3$-manifold, the Week's manifold W satisfying
$\Lambda^w(\pi_1(W)) < M(L)$, see Example~\ref{exa:Week's_manifold}.

We will not make any new contributions to Lehmer's problem in this article. However, we
think that it is interesting to put Lehmer's problem, which itself is already very
interesting and has many intriguing connections to number theory, topology and geometry,
in a more general context.  Moreover, we will give some evidence for the hope that
$\Lambda(G) > 1$ or  even $\Lambda(G) \ge \Lambda_1(\IZ)$ holds for some torsionfree
groups $G$. Namely, we will show in Theorem~\ref{the:Finitely_generated_free_abelian_groups} that 
$\Lambda^w(\IZ^d) = \Lambda_1(\IZ^d) = \Lambda^w_1(\IZ^d)$ holds for all
natural numbers $d \ge 1$ and that this value is actually independent of $d \ge 1$. We can also prove
$\Lambda(\IZ) = \Lambda(\IZ^d)$ for $d \ge 1$, but have not been able to relate
$\Lambda(\IZ)$ to $\Lambda^w(\IZ)$ expect for the obvious inequality $\Lambda(\IZ) \le \Lambda^w(\IZ)$.
In particular we do not know whether $\Lambda^w(\IZ) > 1 \implies \Lambda(\IZ) > 1$.  Conjecturally one may hope for
$\Lambda(\IZ) = \sqrt{\Lambda^w(\IZ)}$.

Moreover, we will
explain in Section~\ref{sec:General_Approximation_Results}
how to use  approximation techniques to potentially extend the class of groups for which
$\Lambda(G) > 1$ holds.

The paper has been  financially supported by the ERC Advanced Grant ``KL2MG-interactions''
 (no.  662400) of  the author granted by the European Research Council, and
 by the Deutsche Forschungsgemeinschaft (DFG, German Research Foundation)
under Germany's Excellence Strategy \--- GZ 2047/1, Projekt-ID 390685813, Cluster of Excellence 
``Hausdorff Center for Mathematics'' at Bonn. The author wants to thank the referees for their  detailed and useful reports.

\tableofcontents

%%%%%%%%%%%%%%%%%%%%%%%%%%%%%%%%%%%%%%%%%%%%%%%%%%%%%%%%%%%%%%%%%%%%%%%%%%%%%%%%%
%%%%%%%%%%%%%%%%%%%%%%%%%%%% Section 1: Lehmer's problem  %%%%%%%%%%%%%%%%%%%%%%%%%%%%%%%%
%%%%%%%%%%%%%%%%%%%%%%%%%%%%%%%%%%%%%%%%%%%%%%%%%%%%%%%%%%%%%%%%%%%%%%%%%%%%%%%%%

 \typeout{------------------------   Section 1:  Lehmer's problem --------------------}

\section{Lehmer's problem}
\label{sec:Lehmers_Problem}

Let $p(z) \in \IC[\IZ] = \IC[z,z^{-1}]$ be a non-trivial element. 
Its \emph{Mahler measure} is defined by
\begin{eqnarray}
M(p) &:= &  \exp\left(\int_{S^1} \ln(|p(z)|) d\mu\right).
\label{Mahler_measure}
\end{eqnarray}
 By Jensen's equality we have
\begin{eqnarray}
\exp\left(\int_{S^1} \ln(|p(z)|) d\mu\right)
& = &
|c| \cdot \prod_{\substack{i=1,2, \ldots, r\\|a_i| > 1}} |a_i|,
\label{Jensen}
\end{eqnarray}
if we write $p(z)$ as a product
$p(z) = c \cdot z^k \cdot \prod_{i=1}^r (z - a_i)$
for an integer $r \ge 0$, non-zero complex numbers $c$, $a_1$, $\ldots$,  $a_r$ and an integer $k$.
This implies $M(p) \ge 1$ if $p$ has integer coefficients, i.e., belongs to $\IZ[\IZ] = \IZ[z,z^{-1}]$. 

The following problem goes back to a question of Lehmer~\cite{Lehmer(1933)}.

\begin{problem}[Lehmer's Problem]\label{pro:Lehmers_problem}
Does there exist  a constant $\Lambda > 1$ such that for all non-trivial
elements $p(z) \in \IZ[\IZ] = \IZ[z,z^{-1}]$ with $M(p) \not= 1$ we have
\[
M(p) \ge \Lambda.
\]
\end{problem}

\begin{remark}[Lehmer's polynomial]
\label{rem:Lehmers_polynomial}
There is even a candidate for which the minimal Mahler measure is attained,
namely, \emph{Lehmer's polynomial} 
\[
L(z) := z^{10} + z^9 - z^7 - z^6 -z^5 -z^4- z^3 + z +1.
\]
It is conceivable that for any non-trivial element $p \in \IZ[\IZ]$ with $M(p) > 1$ 
\[
M(p) \ge M(L) = 1.17628 \ldots \]
holds.

Actually, $L(z)$ is  $-z^5 \cdot \Delta(z)$, where  $\Delta(z)$ is the Alexander polynomial of the pretzel knot given by $(-2,3,7)$.
\end{remark}

For a survey on Lehmer's problem we refer for instance 
to~\cite{Boyd-Lind-Villegas-Deninger(1999),Boyd(1981speculations),Carrizosa(2009),Smyth(2008)}.

%%%%%%%%%%%%%%%%%%%%%%%%%%%%%%%%%%%%%%%%%%%%%%%%%%%%%%%%%%%%%%%%%%%%%%%%%%%%%%%%%
%%%%%%%%%%%%%%%%%%%% Section 2: The Mahler measure as Fuglede-Kadison determinant %%%%%%%%%%%%%%%%
%%%%%%%%%%%%%%%%%%%%%%%%%%%%%%%%%%%%%%%%%%%%%%%%%%%%%%%%%%%%%%%%%%%%%%%%%%%%%%%%%

 \typeout{-------------   Section 2: The Mahler measure as Fuglede-Kadison determinant --------------------}

\section{The Mahler measure as Fuglede-Kadison determinant}
\label{sec:The_Mahler_measure_as_Fuglede-Kadison_determinant}

The following result is proved in~\cite[(3.23) on page~136]{Lueck(2002)}.
We will recall the Fuglede-Kadison determinant and its basic properties in the 
Appendix, see Section~\ref{sec:L2-invariants}.

\begin{theorem}[Mahler measure and Fuglede-Kadison determinants over $\IZ$] 
\label{lem:Mahler_measure_and_Fuglede-Kadison_determinant}
Consider an element $p = p(z) \in \IC[\IZ] = \IC[z,z^{-1}]$. It defines a bounded $\IZ$-equivariant operator
$r^{(2)}_p  \colon L^2(\IZ) \to L^2(\IZ)$ by multiplication with $p$. Suppose that $p$ is not zero.

Then the  Fuglede-Kadison determinant ${\det}_{\caln(\IZ)}(r_p^{(2)})$ of $r_p^{(2)}$ agrees with the Mahler measure, i.e.,
\[
{\det}_{\caln(\IZ)}(r_p^{(2)}) = M(p).
\]
\end{theorem}

Note that the identification of the Fuglede-Kadison determinant with the Mahler measure
holds also for non-trivial elements $p$ in $\IC[\IZ^d] = \IC[z_1^{\pm 1}, z_d^{\pm 1}, \ldots, z_d^{\pm 1}, ]$,
where $d$ is any natural number, see~\cite[Example~3.13 on page~128]{Lueck(2002)}.

%%%%%%%%%%%%%%%%%%%%%%%%%%%%%%%%%%%%%%%%%%%%%%%%%%%%%%%%%%%%%%%%%%%%%%%%%%%%%%%%%
%%%%%%%%%%%%%%%%%%%%%%%%%%%%%% Section 3: Lehmer's problem for arbitrary groups  %%%%%%%%%%%%%%%%
%%%%%%%%%%%%%%%%%%%%%%%%%%%%%%%%%%%%%%%%%%%%%%%%%%%%%%%%%%%%%%%%%%%%%%%%%%%%%%%%%

 \typeout{-------------   Section 3:  Lehmer's problem for arbitrary groups  --------------------}

\section{Lehmer's problem for arbitrary groups}
\label{sec:Lehmer's_Question_for_arbitrary_groups}

Given a group $G$, we consider $L^2(G)$ as a Hilbert space with the obvious isometric
linear $G$-action from the left and write an element in $L^2(G)^r := \bigoplus_{i=1}^r L^2(G)$
as a row $(x_1,x_2, \ldots, x_r)$ for $x_i \in L^2(G)$, in other words as a $(1,r)$-matrix.
Given a $A$ in $M_{r,s}(\IZ G)$ or $M_{r,s}(\IC G)$, we obtain by right multiplication
with $A$ a bounded $G$-equivariant operator
\begin{eqnarray}
  r_A^{(2)} \colon L^2(G)^r & \to & L^2(G)^s, 
\quad  (x_i)_{i = 1,2, \ldots, r} \mapsto \left( \sum_{k = 1}^r x_k \cdot a_{k,j} \right)_{j = 1,2 \ldots, s}.
\label{r_A(2)}
\end{eqnarray}
Note that with these conventions we have $r_{AB}^{(2)} = r_B^{(2)} \circ r_A^{(2)}$ for  $A \in M_{r,s}(\IC G)$ 
and $B \in M_{s,t}(\IC G)$.

\begin{definition}[Lehmer's constant of a group]
\label{def:Lehmers_constants_of_a_group}
Define \emph{Lehmer's constant}  of a group $G$
\[
\Lambda(G) \in [1,\infty)
\]
to be the infimum of the set of Fuglede-Kadison determinants
\[
{\det}_{\caln(G)}\bigl(r_A^{(2)} \colon L^2(G)^r \to L^2(G)^s\bigr),
\]
where $A$ runs through all $(r,s)$-matrices $A \in M_{r,s}(\IZ G)$
for all $r,s \in \IZ$ with $r,s \ge 1$ for which ${\det}_{\caln(G)}(r_A^{(2)}) > 1$ holds.

If we only allow $(1,1)$-matrices $A$ with ${\det}_{\caln(G)}(r_A) > 1$, 
we denote the corresponding infimum by
\[
\Lambda_1(G) \in [1,\infty).
\]

If we only allow $(r,r)$-matrices $A$ for any natural number $r$ such that
$r_A^{(2)} \colon L^2(G)^r \to L^2(G)^r$ is a weak isomorphism, or, equivalently, is
injective, and ${\det}_{\caln(G)}(r_A) > 1$, we denote the corresponding infimum by
\[
\Lambda^w(G) \in [1,\infty).
\]

If we only allow $(1,1)$-matrices $A$ such that $r_A^{(2)} \colon L^2(G) \to L^2(G)$ is
weak isomorphism, or, equivalently, is injective, and ${\det}_{\caln(G)}(r_A) > 1$, then
we denote the corresponding infimum by
\[
\Lambda^w_1(G) \in [1,\infty).
\]
\end{definition}

Obviously we have 
\[
\begin{array}{lclcl}
\Lambda(G)  & \le &  \Lambda^w(G) & \le &  \Lambda_1^w(G);
\\
\Lambda(G)  & \le &  \Lambda_1(G) & \le &  \Lambda_1^w(G).
\end{array}
\]
A priori there is no obvious relation between $\Lambda^w(G)$ and $\Lambda_1(G)$.

\begin{problem}[Lehmer's problem for arbitrary groups]\label{pro:Lehmers_problem_for_arbitrary_groups}
  For which groups $G$ is $\Lambda(G) > 1$, $\Lambda_1(G) > 1$, $\Lambda^w(G) > 1$ or
  $\Lambda_1^w(G) > 1$ true?
\end{problem}

For amenable groups this problem is already considered in~\cite[Question~4.7]{Chung-Thom(2015)}.
See also~\cite{Dasbach-Lalin(2008),Dasbach-Lalin(2009)}.

\begin{remark}[$\Lambda_1(\IZ)$ and Lehmer's problem]\label{rem:Lambdaw_1(Zd)_and_Lehmers_problem}
    In view of Theorem~\ref{lem:Mahler_measure_and_Fuglede-Kadison_determinant} we see that 
  Lehmer's  Problem~\ref{pro:Lehmers_problem} is equivalent to the question whether
  $\Lambda_1(\IZ) > 1$.  In view of Remark~\ref{rem:Lehmers_polynomial} one would
  expect that $\Lambda_1(\IZ)$ is the Mahler measure $M(L)$ of Lehmer's polynomial. 
  We conclude $\Lambda_1(\IZ) = \Lambda^w_1(\IZ) = \Lambda^w(\IZ)$ 
  from Theorem~\ref{the:Finitely_generated_free_abelian_groups}. We do not know how
  $\Lambda(\IZ)$ and $ \Lambda^w(\IZ)$ are related except for the obvious inequality
  $\Lambda(\IZ) \le \Lambda^w(\IZ)$.
\end{remark}

\begin{remark}[Why matrices and why $\Lambda(G)$?]\label{rem:matrices}
  We are also interested besides $\Lambda_1(G)$ in the numbers $\Lambda^w(G)$, $\Lambda_1^w(G)$ and $\Lambda(G)$
  for the following reasons.  There is the notion of $L^2$-torsion, see for
  instance~\cite[Chapter~3]{Lueck(2002)}, which is essential defined in terms of the
  Fuglede-Kadison determinants of the differentials of the $L^2$-chain complex of the
  universal covering of a finite $CW$-complex or closed manifold. These differentials are
  given by $(r,s)$-matrices over $\IZ G$, where $r$ and $s$ can be any natural
  numbers. Therefore it is important to consider matrices and not only elements in $\IZ
  G$. Moreover, these differentials are not injective in general.

  Another reason to consider matrices is the possibility to consider restriction to a subgroup of finite index
since this passage turns a $(1,1)$-matrix into a matrix of the size $([G:H],[G:H])$.
  
  One advantage of $\Lambda(G)$ or $\Lambda_1(G)$ in comparison with $\Lambda^w(G)$ or $\Lambda^w_1(G)$ is the 
  better behavior under approximation, see Sections~\ref{sec:General_Approximation_Results}
  and~\ref{sec:Approximation_Results_over_Zd}. The problem is that for a square matrix $A$ over $G$
such that $r^{(2)}_A$ is a weak isomorphism, the operator $r_{p(A)}^{(2)}$ is not necessarily again  a weak isomorphism, if we have a 
not necessarily injective group homomorphism $p \colon G \to Q$ and $p(A)$ is the reduction of $A$ to a matrix over $Q$.
\end{remark}

\begin{remark}[Dobrowolski's estimate]\label{rem:Dobrowolskis_estimate}
  Dobrowolski~\cite{Dobrowolski(1991)} shows for a monic polynomial $p(z)$ with $p(0)
  \not= 0$ which is not a product of cyclotomic polynomials 
  \[
  M(p) \ge 1 + \frac{1}{a \exp(bk^k)},
  \]
  where $k$ is the number of non-zero coefficients of $p$ and $a$ and $b$ are given
  constants.  

 This triggers the question, whether for a given number $k$ and group $G$ there exists a
  constant $\Lambda_1(k,G) > 1$ such that for every element $x = \sum_{g \in G} n_g
  \cdot g$ in $\IZ G$ for which at most $k$ of the coefficients $n_g$ are not zero and
  ${\det}_{\caln(G)}(r_x^{(2)}) \not = 1$ holds, we have ${\det}_{\caln(G)}(r_x^{(2)}) \ge   \Lambda_1(k,G)$.  

 For $G = \IZ$ the existence of $\Lambda_1(k,G) > 1$  follows from Dobrowolski's result,
  and extends to $G =  \IZ^d$  by the  iterated limit appearing in
  Remark~\ref{rem:Approximating_Mahler_measures_for_polynomials_in_several_variables}.
\end{remark}

%%%%%%%%%%%%%%%%%%%%%%%%%%%%%%%%%%%%%%%%%%%%%%%%%%%%%%%%%%%%%%%%%%%%%%%%%%%%%%%%%
%%%%%%%%%%%%%%%%%%%%%% Section 4: The Determinant Conjecture   %%%%%%%%%%%%%%%%%%%%%%%%%%%%%%
%%%%%%%%%%%%%%%%%%%%%%%%%%%%%%%%%%%%%%%%%%%%%%%%%%%%%%%%%%%%%%%%%%%%%%%%%%%%%%%%%

 \typeout{------------------------  Section 4:  The Determinant Conjecture --------------------------}

\section{The Determinant Conjecture}
\label{sec:The_Determinant_Conjecture}

Recall that the Mahler measure satisfies $M(p) \ge 1$ for any non-trivial polynomial $p$ with integer coefficients.
This is expected to be true for the Fuglede-Kadison determinant for all groups, namely, there is the 

\begin{conjecture}[Determinant Conjecture]
\label{con:Determinant_Conjecture}
Let $G$ be a group. Then for  any $A \in M_{r,s}(\IZ G)$ the Fuglede-Kadison determinant of the
morphism $r_A^{(2)}\colon L^2(G)^r \to L^2(G)^s$ given by right multiplication
with $A$ satisfies 
\[
{\det}_{\caln(G)}(r_A^{(2)}) \ge 1.
\]
\end{conjecture}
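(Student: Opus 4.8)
Since the Determinant Conjecture is open for general $G$ and no proof is expected at the level of arbitrary groups, the realistic plan is to prove it for the largest class of groups reachable by present methods and to pinpoint where the argument fails otherwise. The unifying idea is \emph{approximation}: replace $G$ by finite models, verify the inequality there by an elementary integrality argument, and then pass to a limit in the Fuglede--Kadison determinant.

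First I would settle the base case of a finite group $Q$. Here $\caln(Q) = \IC Q$ is finite dimensional, the trace is $\frac{1}{|Q|}\tr_{\IC}$, and for $B \in M_{r,s}(\IZ Q)$ the operator $r_B^{(2)}$ is given by an integer matrix $M$ of size $(r|Q|)\times(s|Q|)$ representing right multiplication on $\IC[Q]^r \to \IC[Q]^s$. By definition the Fuglede--Kadison determinant is
\[
{\det}_{\caln(Q)}\bigl(r_B^{(2)}\bigr) = \Bigl(\prod_{\lambda > 0} \lambda\Bigr)^{1/(2|Q|)},
\]
the product running over the nonzero eigenvalues of the positive semidefinite integer matrix $M^{\ast}M$ (note $M^{\ast}=M^{T}$, since $M$ is real). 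That product equals, up to sign, the lowest nonzero coefficient of the characteristic polynomial of $M^{\ast}M$, hence is a nonzero integer of absolute value $\ge 1$. Therefore ${\det}_{\caln(Q)}(r_B^{(2)}) \ge 1$, which is the conjecture for finite $Q$.

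Next I would invoke an approximation theorem to transport this bound to $G$. For $G$ residually finite, fix a nested chain of finite index normal subgroups $G_i \trianglelefteq G$ with $\bigcap_i G_i = \trivial$, and let $A_i \in M_{r,s}(\IZ[G/G_i])$ be the reduction of $A$. The goal is the inequality
\[
{\det}_{\caln(G)}\bigl(r_A^{(2)}\bigr) \;\ge\; \limsup_{i} {\det}_{\caln(G/G_i)}\bigl(r_{A_i}^{(2)}\bigr),
\]
which combined with the finite group bound yields ${\det}_{\caln(G)}(r_A^{(2)}) \ge 1$. The same scheme applies to amenable groups and, most generally, to sofic groups, where the finite quotients are replaced by finite dimensional models along a sofic approximation and the group von Neumann algebras by the corresponding matrix algebras.

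The hard part, and the reason the conjecture is presently known only for sofic $G$, is the limit step. The spectral measures $\mu_i$ of $\bigl(r_{A_i}^{(2)}\bigr)^{\ast} r_{A_i}^{(2)}$ converge weakly to the spectral measure $\mu$ of $\bigl(r_A^{(2)}\bigr)^{\ast} r_A^{(2)}$, and their supports are uniformly bounded (the bound depends only on the fixed coefficients of $A$); but $\ln\lambda \to -\infty$ as $\lambda \to 0^{+}$, so weak convergence alone does not control $\int_{(0,\infty)} \ln\lambda \, d\mu_i(\lambda)$, where the Fuglede--Kadison determinant lives, the more so because the measures may carry an atom at $0$ that the determinant ignores. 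What makes the limit behave is a \emph{uniform logarithmic estimate} on the mass the $\mu_i$ put near zero, of the shape $\mu_i\bigl([0,\varepsilon]\bigr) \le C/|\ln \varepsilon|$ with $C$ independent of $i$, itself forced by the integrality of the coefficients of $A$. Establishing this uniform estimate and deducing the displayed inequality is the crux for residually finite, amenable, and sofic groups; providing any substitute for a finite approximation is the genuine obstacle beyond the sofic world, where, although no non-sofic group is currently known, the conjecture remains unproven.
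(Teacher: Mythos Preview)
The statement you were given is a \emph{conjecture}, not a theorem: the paper does not attempt to prove it for arbitrary $G$, and instead records its status in Remark~\ref{rem:status_of_Determinant_Conjecture}, listing closure properties of the class of groups $\cald$ for which it is known and noting that sofic groups belong to $\cald$. So there is no ``paper's own proof'' to compare against. You clearly recognise this, and what you have written is not a proof of the conjecture but a correct outline of the standard strategy for the known cases.

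Your sketch is sound. The finite-group base case is exactly the argument the paper uses implicitly (compare Example~\ref{exa:det_for_finite_groups} and the computation in the proof of Lemma~\ref{lem:elementary_properties_of_Lambda(G)}~\eqref{lem:elementary_properties_of_Lambda(G):|G|_le_2}): the product of the nonzero eigenvalues of the positive semidefinite integer matrix $M^{T}M$ is, up to sign, the lowest nonvanishing coefficient of its characteristic polynomial, hence a nonzero integer. The approximation inequality you isolate,
\[
{\det}_{\caln(G)}(r_A^{(2)}) \;\ge\; \limsup_{i}\, {\det}_{\caln(Q_i)}(r_{A_i}^{(2)}),
\]
is precisely the content of the paper's Sub-Approximation Theorem~\ref{the:subApproximation}, whose hypotheses (that each $Q_i$ satisfies the Determinant Conjecture) are exactly what your base case supplies when the $Q_i$ are finite. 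Your diagnosis of the analytic difficulty --- weak convergence of spectral measures does not control $\int_{0^{+}} \ln\lambda\,d\mu$, and one needs a uniform bound of the shape $\mu_i\bigl((0,\varepsilon]\bigr) \le C/|\ln\varepsilon|$ --- is correct, and it is worth noting that this uniform bound is itself a \emph{consequence} of the Determinant Conjecture at the $Q_i$-level together with the uniform operator-norm bound, so the argument bootstraps in exactly the way you indicate. The extension from residually finite to sofic groups replaces finite quotients by sofic approximations in matrix algebras but follows the same template; the paper does not spell this out, deferring instead to~\cite{Elek-Szabo(2005)} and~\cite{Schick(2001b)}.
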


\begin{remark}[Status of the Determinant Conjecture]\label{rem:status_of_Determinant_Conjecture}
  The  following is known for the class $\cald$ of groups for which the Determinant
  Conjecture~\ref{con:Determinant_Conjecture} is true, see~\cite[Theorem~5]{Elek-Szabo(2005)},~%
\cite[Section~13.2 on pages~459~ff]{Lueck(2002)},~\cite[Theorem~1.21]{Schick(2001b)}.  
  \begin{enumerate}

  \item\label{rem:status_of_Determinant_Conjecture:amenable_quotient}
    Amenable quotient\\
    Let $H \subset G$ be a normal subgroup. Suppose that $H \in \cald$ and the
    quotient $G/H$ is amenable. Then $G \in \cald$;

  \item\label{rem:status_of_Determinant_Conjecture:direct_limit}
    Colimits\\
    If $G = \colim_{i \in I} G_i$ is the colimit of the directed system $\{G_i
    \mid i \in I\}$ of groups indexed by the directed set $I$ (with not
    necessarily injective structure maps) and each $G_i$ belongs to $\cald$,
    then $G$ belongs to $\cald$;

  \item\label{rem:status_of_Determinant_Conjecture:inverse_limit}
    Inverse limits\\
    If $G = \lim_{i \in I} G_i$ is the limit of the inverse system $\{G_i \mid i
    \in I\}$ of groups indexed by the directed set $I$ and each $G_i$ belongs to
    $\cald$, then $G$ belongs to $\cald$;

  \item\label{rem:status_of_Determinant_Conjecture:subgroups}
    Subgroups\\
    If $H$ is isomorphic to a subgroup of a group $G$ with $G \in \cald$, then
    $H \in \cald$;

  \item\label{rem:status_of_Determinant_Conjecture:quotient_with_finite_kernel}
    Quotients with finite kernel\\
    Let $1 \to K \to G \to Q \to 1$ be an exact sequence of groups. If $K$ is
    finite and $G$ belongs to $\cald$, then $Q$ belongs to $\cald$;

  \item\label{rem:status_of_Determinant_Conjecture:sofic_groups} Sofic groups
    belong to $\cald$.
  \end{enumerate}

  The class of sofic groups is very large.  It is closed under direct and free
  products, taking subgroups, taking inverse and direct limits over directed index sets, and 
  under extensions with amenable groups as quotients and a sofic group as
  kernel.  In particular it contains all residually amenable groups.  One
  expects that there exists non-sofic groups but no example is known.  More
  information about sofic groups can be found for instance
  in~\cite{Elek-Szabo(2006)} and~\cite{Pestov(2008)}.
\end{remark}

\begin{remark}[Invertible matrices and the Determinant Conjecture]
\label{rem:Invertible_matrices_Determinant_Conjecture}
Let $G$ be a group. Consider a matrix $A \in Gl_r(\IZ G)$. Then we get from
Theorem~\ref{the:main_properties_of_det}~\eqref{the:main_properties_of_det:composition}
\[
{\det}_{\caln(G)}(r_A^{(2)}) \cdot {\det}_{\caln(G)}(r_{A^{-1}}^{(2)})  = 1.
\]
If $G$ satisfies the Determinant Conjecture~\ref{con:Determinant_Conjecture}, we get
\begin{eqnarray}
{\det}_{\caln(G)}(r_A)  & = & 1 \quad \text{for}\; A \in Gl_r(\IZ G).
\label{det_(invertible)_is_1}
\end{eqnarray}

The argument appearing in the proof of~\cite[Theorem~6.7~(2)]{Lueck(2018)}
shows that the $K$-theoretic Farrell-Jones Conjecture for $\IZ G$ also implies~\eqref{det_(invertible)_is_1}.
\end{remark}

%%%%%%%%%%%%%%%%%%%%%%%%%%%%%%%%%%%%%%%%%%%%%%%%%%%%%%%%%%%%%%%%%%%%%%%%%%%%%%%%%
%%%%%%%%%%%%%%%%%%%%%% Section 5: Basic properties of Lehmer's constant for groups  %%%%%%%%%%%%%%%%
%%%%%%%%%%%%%%%%%%%%%%%%%%%%%%%%%%%%%%%%%%%%%%%%%%%%%%%%%%%%%%%%%%%%%%%%%%%%%%%%%

 \typeout{--   Section 5: Basic properties of Lehmer's constant for groups Lehmer's problem for arbitrary groups --}

\section{Basic properties of Lehmer's constant for arbitrary groups}
\label{sec:Basic_properties_of_Lehmers_constant_for_arbitrary_groups}

\begin{lemma}
\label{lem:elementary_properties_of_Lambda(G)}

\begin{enumerate}

\item\label{lem:elementary_properties_of_Lambda(G):subgroups}
If $H$ is a subgroup of $G$, then
\begin{eqnarray*}
\Lambda(G) & \le & \Lambda(H);
\\
\Lambda_1(G) & \le & \Lambda_1(H);
\\
\Lambda^w(G) & \le & \Lambda^w(H);
\\
\Lambda_1^w(G) & \le & \Lambda_1^w(H);
\end{eqnarray*}
\item\label{lem:elementary_properties_of_Lambda(G):finite_index}
If $H \subseteq G$ has finite index,  then  
\begin{eqnarray*}
\Lambda(H)^{[G:H]^{-1}} & \le & \Lambda(G);
\\
 \Lambda^w(H)^{[G:H]^{-1}} & \le & \Lambda^w(G);
\end{eqnarray*}

\item\label{lem:elementary_properties_of_Lambda(G):|G|_le_2}
We have
\[
\Lambda_1(\{1\}) =  \Lambda^w(\{1\}) =\Lambda^w_1(\{1\})  =  2,
\]
\[\Lambda(\{1\}) = \sqrt{2},
\]
and 
\[
\Lambda^w(\IZ/2) =  \Lambda^w_1 (\IZ/2)  =  \sqrt{3};
\]
\[
\Lambda_1(\IZ/2)  = \sqrt{2};
\]
\[
2^{1/4} \le \Lambda(\IZ/2) \le \sqrt{2};
\]
\item\label{lem:elementary_properties_of_Lambda(G):G_finite}
If $G$ is finite and $|G| \ge 3$, we get 
\[
\begin{array}{ccccccc}
2^{|G|^{-1}} &  \le &  \Lambda^w(G) & \le & \Lambda^w_1(G) &  \le  & (|G| -1)^{|G|^{-1}};
\\
  2^{(2|G|)^{-1}}  &  \le &  \Lambda(G) & \le &   \Lambda_1(G)  &\le  & (|G| -1)^{|G|^{-1}};
\end{array}
\]

\item\label{lem:elementary_properties_of_Lambda(G):finitely_generated_subgroups}
Let $G$ be a group. Then
\begin{eqnarray*}
\Lambda(G)  & = &\inf \{\Lambda(H) \mid H \subseteq G \;\text{finitely generated subgroup}\};
\\
\Lambda_1(G)  & = &\inf \{\Lambda_1(H) \mid H \subseteq G \;\text{finitely generated subgroup}\};
\\
\Lambda^w(G)  & = &\inf \{\Lambda^w(H) \mid H \subseteq G \;\text{finitely generated subgroup}\};
\\
\Lambda^w_1(G)  & = &\inf \{\Lambda^w_1(H) \mid H \subseteq G \;\text{finitely generated subgroup}\}.
\end{eqnarray*}

\end{enumerate}
\end{lemma}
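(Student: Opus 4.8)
The plan is to prove all four identities at once, since they differ only in the class of matrices one is allowed to use, and that class will behave well under the reduction I perform. Write $\Lambda^{\bullet}$ for any one of $\Lambda$, $\Lambda_1$, $\Lambda^w$, $\Lambda_1^w$, and call a matrix $A \in M_{r,s}(\IZ G)$ \emph{admissible} for $\Lambda^{\bullet}(G)$ if it satisfies $\det_{\caln(G)}(r_A^{(2)}) > 1$ together with the relevant size constraint (arbitrary $r,s$, or $r=s$, or $r=s=1$) and, in the weak cases, the requirement that $r_A^{(2)}$ be a weak isomorphism. The inequality $\Lambda^{\bullet}(G) \le \inf\{\Lambda^{\bullet}(H) \mid H \subseteq G \text{ finitely generated}\}$ is immediate: apply Lemma~\ref{lem:elementary_properties_of_Lambda(G)}~\eqref{lem:elementary_properties_of_Lambda(G):subgroups} to each finitely generated subgroup $H$. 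Hence the whole content lies in the reverse inequality.

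For the reverse inequality I would fix any matrix $A$ admissible for $\Lambda^{\bullet}(G)$ and observe that it involves only finitely much of $G$. Indeed, each of the finitely many entries of $A$ is a finite $\IZ$-linear combination of group elements, so only finitely many elements $g_1, \ldots, g_n \in G$ occur in $A$. Set $H := \langle g_1, \ldots, g_n \rangle$. Then $H$ is a finitely generated subgroup and, by construction, $A$ already lies in $M_{r,s}(\IZ H)$.

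The key step is that restricting from $G$ to $H$ leaves the relevant invariants unchanged. Under the inclusion $i \colon H \hookrightarrow G$ the operator $r_A^{(2)} \colon L^2(G)^r \to L^2(G)^s$ is the induction of $r_A^{(2)} \colon L^2(H)^r \to L^2(H)^s$, and induction is exact and preserves both the von Neumann dimension and the trace, hence preserves the Fuglede-Kadison determinant as well as the property of being a weak isomorphism (equivalently, of being injective); see Theorem~\ref{the:main_properties_of_det} and~\cite[Theorem~3.14]{Lueck(2002)}. Consequently $\det_{\caln(H)}(r_A^{(2)}) = \det_{\caln(G)}(r_A^{(2)}) > 1$, and if $A$ was required to give a weak isomorphism over $G$ then it also gives one over $H$. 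Thus $A$ is admissible for $\Lambda^{\bullet}(H)$, so $\Lambda^{\bullet}(H) \le \det_{\caln(H)}(r_A^{(2)}) = \det_{\caln(G)}(r_A^{(2)})$, whence $\inf_{H'} \Lambda^{\bullet}(H') \le \det_{\caln(G)}(r_A^{(2)})$. Taking the infimum over all admissible $A$ and using the definition of $\Lambda^{\bullet}(G)$ yields $\inf\{\Lambda^{\bullet}(H) \mid H \subseteq G \text{ finitely generated}\} \le \Lambda^{\bullet}(G)$, which closes the argument.

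The single nonformal ingredient, and the step I expect to need the most care, is this induction invariance of the Fuglede-Kadison determinant and of the weak-isomorphism property; everything else is the elementary observation that one matrix uses only finitely many group elements. Once that invariance is available the four cases are genuinely identical: the only difference between them is the size and injectivity constraints cutting out the admissible class, and these are manifestly inherited by $A$ when it is viewed over $H$, since $A$ keeps the same size and, by the invariance above, keeps the weak-isomorphism property.
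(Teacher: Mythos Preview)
Your argument is correct and matches the paper's proof essentially line for line: one inequality from part~\eqref{lem:elementary_properties_of_Lambda(G):subgroups}, the other by observing that any matrix over $\IZ G$ already lives over the finitely generated subgroup generated by its support, together with the induction invariance of ${\det}_{\caln(-)}$ (Theorem~\ref{the:main_properties_of_det}~\eqref{the:main_properties_of_det:induction}) and of injectivity. Your unified ``admissible'' phrasing is a bit cleaner than the paper's case-by-case treatment, but the mathematical content is identical.
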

\begin{proof}~\eqref{lem:elementary_properties_of_Lambda(G):subgroups}
Consider $A \in M_{r,s}(\IZ H)$. Let $i \colon H \to G$ be the inclusion.
By applying the ring homomorphism $\IZ H \to \IZ G$ induced by $i$ 
to the entries of $A$, we obtain a matrix $i_* A \in M_{r,s}(\IZ G)$. Then we get
\[
{\det}_{\caln(G)}(r^{(2)}_{i_*A}) = {\det}_{\caln(H)}(r_A^{(2)})
\]
from Theorem~\ref{the:main_properties_of_det}~\eqref{the:main_properties_of_det:induction}
and hence $\Lambda(H)  \le \Lambda(G)$ and $\Lambda_1(H)  \le \Lambda_1(G)$.

If $r = s$ and $r_A^{(2)}$ is injective, then $r^{(2)}_{i_*A}$ is injective because
of~\cite[Lemma~1.24~(3) on page~30]{Lueck(2002)}. This implies $\Lambda^w(G) \le
\Lambda^w(H)$ and $\Lambda^w_1(G) \le \Lambda^w_1(H)$.
\\[2mm]~\eqref{lem:elementary_properties_of_Lambda(G):finite_index} Consider a matrix 
$A \in M_{r,s}(\IZ G)$. We have introduced the bounded $G$-equi\-va\-riant operator 
$r_A^{(2)} \colon L^2(G)^r \to L^2(G)^s$ in~\eqref{r_A(2)}.  Let 
$i^* r_A^{(2)} \colon i^* L^2(G)^r \to i^* L^2(G)^s$ be the bounded 
$H$-equivariant operator obtained by restricting the
$G$-action to an $H$-action.  Since $[G:H]$ is finite, there is an $H$-equivariant isometric
isomorphism of Hilbert spaces from $L^2(H)^{[G:H]}$ to $i^* L^2(G)$. Hence for an
appropriate matrix $B \in M_{r \cdot [G:H],s \cdot [G:H]}(\IZ H)$ the bounded
$H$-equivariant operator $i^*r_A^{(2)} \colon i^*L^2(G)^r \to i^*L^2(G)^s$ can be
identified with $r_B^{(2)} \colon L^2(H)^{r \cdot [G:H]} \to L^2(H)^{s \cdot [G:H]}$. We
conclude from Theorem~\ref{the:main_properties_of_det}~\eqref{the:main_properties_of_det:restriction}
\[
{\det}_{\caln(H)}(r_{B}^{(2)}) = {\det}_{\caln(H)}(i^*r_{A}^{(2)}) = {\det}_{\caln(G)}(r_A^{(2)})^{[G:H]}.
\]
This implies $\Lambda(G)^{[G:H]} \ge \Lambda(H)$.

If $r = s$ and $r_A^{(2)}$ is injective, then $r_{i^*A}^{(2)}$ is injective. This implies
$\Lambda^w(G)^{[G:H]} \ge \Lambda^w(H)$.
\\[2mm]~\eqref{lem:elementary_properties_of_Lambda(G):|G|_le_2}
Consider $A \in M_{r,s}(\IZ) = M_{r,s}(\IZ[\{1\}])$.
Let $\lambda_1$, $\lambda_2$,
$\ldots$, $\lambda_r$ be the eigenvalues of $AA^*$ (listed with
multiplicity), which are different
from zero.  We get from Example~\ref{exa:det_for_finite_groups} 
\[
{\det}_{\caln(\{1\})}(r_A^{(2)}) = \sqrt{\prod_{i = 1}^r \lambda_i}.
\]
Let $p(t) = \det_{\IC}(t - AA^*)$ be the characteristic polynomial of
$AA^*$. It can be written as $p(t) = t^a \cdot q(t)$ for some polynomial
$q(t)$ with integer coefficients and $q(0) \not= 0$. One easily checks
\[
  |q(0)| = \prod_{i = 1}^r \lambda_i.
\]
Since $q$ has integer coefficients, we conclude $\det(r_A^{(2)}) = \sqrt{n}$ for some integer $n \ge 1$.
A direct calculation shows $\det(r_A^{(2)}) = \sqrt{2}$ for $A = \begin{pmatrix} 1 & 1 \\ 0 & 0 \end{pmatrix}$.
Hence we get
\[
\Lambda(\{1\}) = \sqrt{2}.
\]
Consider the square matrix $A \in M_{r,r}(\IZ) = M_{r,r}(\IZ[\{1\}])$ such that $r_A^{(2)}$ is a weak isomorphism, or, equivalently,
$A$ is invertible as a matrix over $\IC$. Then we conclude  from Example~\ref{exa:det_for_finite_groups}
\[{\det}_{\caln(\{1\})}(r_A^{(2)}) = |{\det}_{\IC}(A)| =  |{\det}_{\IZ}(A)| \in \{n \in \IZ \mid n \ge 1\}.
\]
This implies
\[
\Lambda_1(\{1\}) =  \Lambda^w(\{1\}) =\Lambda^w_1(\{1\})  =  2.
\]

Consider  $A \in M_{r,r}(\IZ[\IZ/2])$. It induces a $\IZ[\IZ/2]$-homomorphism
$r_A \colon \IZ[\IZ/2]^r \to \IZ[\IZ/2]^r$.
There exists an obvious short  exact sequence of $\IZ[\IZ/2]$-modules
$0 \to \IZ^- \to \IZ[\IZ/2] \to \IZ^+ \to 0$, where $\IZ$ is the underlying abelian group of $\IZ^{\pm}$ 
and the generator of $\IZ/2$ acts by $\pm \id$ on $\IZ^{\pm}$.  We obtain a commutative diagram of
endomorphisms of finitely generated free $\IZ$-modules
\[
\xymatrix{0 \ar[r]
&
\IZ[\IZ/2]^r \otimes_{\IZ[\IZ/2]} \IZ^-  \ar[r] \ar[d]^{r_A \otimes_{\IZ[\IZ/2]} \id_{\IZ^-}}
& 
\IZ[\IZ/2]^r  \ar[r] \ar[d]^{r_A}
& 
\IZ[\IZ/2]^r \otimes_{\IZ[\IZ/2]} \IZ^+  \ar[r] \ar[d]^{r_A \otimes_{\IZ[\IZ/2]} \id_{\IZ^+}}
& 
0
\\
0 \ar[r]
&
\IZ[\IZ/2]^r \otimes_{\IZ[\IZ/2]} \IZ^-  \ar[r]
& 
\IZ[\IZ/2]^r  \ar[r]
& 
\IZ[\IZ/2]^r \otimes_{\IZ[\IZ/2]} \IZ^+  \ar[r]
& 
0
}
\]
This implies
\[
{\det}_{\IZ}(r_A) = {\det}_{\IZ}(r_A \otimes_{\IZ[\IZ/2]} \id_{\IZ^-}) \cdot {\det}_{\IZ}(r_A \otimes_{\IZ[\IZ/2]} \id_{\IZ^+}).
\]
Since $\IZ^+ \otimes_{\IZ} \IF_2$ and
$\IZ^- \otimes_{\IZ} \IF_2$ are isomorphic as $\IF_2[\IZ/2]$-modules, we get
\[
{\det}_{\IF_2}\bigl(r_A \otimes_{\IZ[\IZ/2]} \id_{\IZ^-} \otimes_{\IZ} \id_{\IF_2}\bigr) =
{\det}_{\IF_2}\bigl(r_A \otimes_{\IZ[\IZ/2]} \id_{\IZ^+} \otimes_{\IZ} \id_{\IF_2}\bigr).
\]
Since the reduction to $\IF_2$ of
${\det}_{\IZ}(r_A \otimes_{\IZ[\IZ/2]} \id_{\IZ^\pm})$ is
${\det}_{\IF_2}\bigl(r_A \otimes_{\IZ[\IZ/2]} \id_{\IZ^{\pm}}\otimes_{\IZ} \id_{\IF_2}\bigr)$, we conclude
\[
\det(r_A \otimes_{\IZ[\IZ/2]} \id_{\IZ^-}) = \det(r_A \otimes_{\IZ[\IZ/2]} \id_{\IZ^+}) \mod 2.
\]
This implies that ${\det}_{\IZ}(r_A)$ is odd or divisible by four. In particular
$|{\det}_{\IZ}(r_A)|$ is different from $2$.  This implies that for 
any matrix $A \in M_{r,r}(\IZ[\IZ/2])$ for which $r_A \colon \IZ[\IZ/2] \to \IZ[\IZ/2]$ is injective, we have
$|{\det}_{\IZ}(r_A)| = 1$ or  $|{\det}_{\IZ}(r_A)| \ge 3$.

One easily checks that $|{\det}_{\IZ}(r_{t +2} \colon \IZ[\IZ/2] \to \IZ[\IZ/2])| = 3$. 
Since for any matrix $A \in M_{r,r}(\IZ[\IZ/2])$ with injective $r_A^{(2)} \colon  L^2(\IZ/2)^r \to  L^2(\IZ/2)^r $ we have
${\det}_{\caln(\IZ/2)}(r_{A}^{(2)}) = \sqrt{{|\det}_{\IZ}(r_{A})|}$ by Example~\ref{exa:det_for_finite_groups}, we conclude
$\Lambda^w(\IZ/2) = \Lambda^w_1(\IZ/2) = \sqrt{3}$.

Since ${\det}_{\caln(\IZ/2)}\bigl(r_{t+1} \colon L^2(\IZ/2) \to L^2(\IZ/2)\bigr) = \sqrt{2}$ holds by
Example~\ref{exa:det_for_(2,2)-matrices_over_the-trivial_group} and
Theorem~\ref{the:main_properties_of_det}~\eqref{the:main_properties_of_det:restriction}, we get
$\Lambda_1(\IZ/2) \le  \sqrt{2}$. We conclude $\sqrt{2} \le \Lambda_1(\IZ/2)$ 
from $\Lambda_1(\{1\}) = 2$ and
assertion~\eqref{lem:elementary_properties_of_Lambda(G):finite_index}.
This implies $\Lambda_1(\IZ/2) = \sqrt{2}$.

We conclude $2^{1/4} \le \Lambda(\IZ/2)$ 
from $\Lambda_1(\{1\}) = \sqrt{2}$ and
assertion~\eqref{lem:elementary_properties_of_Lambda(G):finite_index}.
\\[2mm]~\eqref{lem:elementary_properties_of_Lambda(G):G_finite}
We conclude from assertions~\eqref{lem:elementary_properties_of_Lambda(G):finite_index}
and~\eqref{lem:elementary_properties_of_Lambda(G):|G|_le_2} for the finite group $G$
\begin{eqnarray*}
  \Lambda(G) & \ge &  2^{(2|G|)^{-1}};
  \\
   \Lambda^w(G) & \ge &  2^{|G|^{-1}}.
\end{eqnarray*}
Consider the norm element $N_G := \sum_{g \in G} g$. Let $e \in G$ be the unit element.
Put $x = N_G -e \in \IZ G$. We have a canonical  $\IC G$-decomposition  $\IC G = \IC \oplus V$,
where $\IC$ is the trivial $G$-representation and $V$ is a direct sum of irreducible $G$-representations with
$V^G = 0$. Then $r_{N_G} \colon \IC G \to \IC G$ is the direct sum of $|G| \cdot \id \colon \IC \to \IC$
and $0 \colon V \to V$. Hence $r_x \colon \IC G \to \IC G$ is the direct sum of
$(|G|-1) \cdot \id_{\IC} \colon \IC \to \IC$ and of $-\id_V \colon V \to V$. This implies that 
$r_x \colon \IC G \to \IC G$ is a $\IC$-isomorphism and
\[
{\det}_{\IC}(r_x \colon \IC G \to \IC G) = |G|-1.
\]
We conclude  from by Example~\ref{exa:det_for_finite_groups}
\[
{\det}_{\caln(G)}(r_{x}^{(2)}) = (|G| -1)^{|G|^{-1}}.
\]
Since $|G| \ge 3$ and hence  $(|G| -1)^{|G|^{-1}}$ is different from $1$,we get
\[
\Lambda_1(G) \le \Lambda_1^w(G) \le (|G| -1)^{|G|^{-1}}.
\]
\\[2mm]~\eqref{lem:elementary_properties_of_Lambda(G):finitely_generated_subgroups}
We obtain 
\begin{eqnarray*}
\Lambda(G) & \le & \inf \{\Lambda(H) \mid H \subseteq G \;\text{finitely generated subgroup}\};
\\
\Lambda_1(G) & \le & \inf \{\Lambda_1(H) \mid H \subseteq G \;\text{finitely generated subgroup}\};
\\
\Lambda^w(G) & \le & \inf \{\Lambda(H) \mid H \subseteq G \;\text{finitely generated subgroup}\};
\\
\Lambda^w_1(G) & \le & \inf \{\Lambda^w_1(H) \mid H \subseteq G \;\text{finitely generated subgroup}\},
\end{eqnarray*}
from assertion~\eqref{lem:elementary_properties_of_Lambda(G):subgroups}.

Consider any matrix $A \in M_{r,s}(\IZ G)$. Let $H$ be the subgroup of $G$ which is generated
by the finite set consisting of those elements $g \in G$ for which for at least one entry in $A$ the coefficient of
$g$ is non-trivial. Then $H \subseteq G$ is finitely generated and $A = i_* B$ for some matrix
$B \in M_{r,s}(\IZ H)$ for the inclusion $i \colon H \to G$. We get
\[
{\det}_{\caln(G)}(r_{B}^{(2)}) = {\det}_{\caln(H)}(r_A^{(2)})
\]
from Theorem~\ref{the:main_properties_of_det}~\eqref{the:main_properties_of_det:induction}. This implies
\begin{eqnarray*}
\Lambda(G)  & \ge & \inf \{\Lambda(H) \mid H \subseteq G \;\text{finitely generated subgroup}\};
\\
\Lambda_1(G)  & \ge & \inf \{\Lambda_1(H) \mid H \subseteq G \;\text{finitely generated subgroup}\}.
\end{eqnarray*}
If $r = s$ and $r_A^{(2)}$ is injective, then also $r_B^{(2)}$ is injective.
Hence we get
\begin{eqnarray*}
\Lambda^w(G) & \ge & \inf \{\Lambda^w(H) \mid H \subseteq G \;\text{finitely generated subgroup}\};
\\
\Lambda^w_1(G)  &\ge & \inf \{\Lambda^w(H) \mid H \subseteq G \;\text{finitely generated subgroup}\}.
\end{eqnarray*}
This finishes the proof of Lemma~\ref{lem:elementary_properties_of_Lambda(G)}.
\end{proof}

\begin{example}[Finite cyclic group of odd order]
Let $n$ be an odd natural number. Then we get for the finite cyclic group $\IZ/n$  the equality
\begin{eqnarray}
\Lambda^w(\IZ/n)  = \Lambda_1^w(\IZ/n) = 2^{n^{-1}}.
\label{Lambda(Z/n)_for_odd_n_precise}
\end{eqnarray}
Namely, let $t \in \IZ/n$ be a generator. Consider the element $t + 1$. 
Then the $\IZ[\IZ/n]$-homomorphism $r_{t+ 1} \colon \IZ[\IZ/n] \to \IZ[\IZ/n]$ 
defines after forgetting the $\IZ/n$-action an $\IZ$-automorphism of $\IZ^n$
given by the matrix
\[
B[n] = 
\left(\begin{matrix}
1 & 0 & 0 & 0 & \cdots & 0 & 0 & 1
\\
1 & 1& 0 & 0 & \cdots & 0 & 0 & 0
\\
0 & 1 & 1 & 0 & \cdots & 0 & 0 & 0
\\
0& 0 & 1 & 1 & \cdots & 0 & 0 & 0
\\
0 & 0 & 0 & 1 & \cdots & 0 & 0 & 0
\\
\vdots & \vdots & \vdots & \vdots & \ddots & \vdots  & \vdots & \vdots
\\
0 & 0 & 0 &  0 & \cdots & 1 & 1 & 0
\\
0 & 0 & 0 & 0 & \cdots & 0 & 1 & 1
  \end{matrix}
  \right)
\]
Since $n$ is odd, we compute by developing after the first row 
$\det(B[n]) = 2$. Hence $r_{t+1}^{(2)}$ is injective and we get from
Example~\ref{exa:det_for_finite_groups}.
\[
{\det}_{\caln(\IZ/n)}(r_{t+1}^{(2)}) = 2^{n^{-1}}.
\]
This together with 
Lemma~\ref{lem:elementary_properties_of_Lambda(G)}~\eqref{lem:elementary_properties_of_Lambda(G):G_finite}
implies~\eqref{Lambda(Z/n)_for_odd_n_precise}.

Moreover, we get
\begin{eqnarray}
2^{(2n)^{-1}} \le \Lambda(\IZ/n)  \le \Lambda_1(\IZ/n)  \le  2^{n^{-1}}.
\label{Lambda(Z/n)_for_odd_n_estimate}
\end{eqnarray}
\end{example}

\begin{remark}[Computations for finite abelian groups]
  Lind~\cite[Definition~1.1]{Lind(2005)} has introduced a Lehmer
  constant for compact abelian groups.  If $G$ is a finite abelian
  group, then his constant agrees with  $\ln(\Lambda_1^w(G))$ for the number
  $\Lambda_1^w(G)$ introduced in
  Definition~\ref{def:Lehmers_constants_of_a_group}. Lind gives some precise values and
  some estimates for $\ln(\Lambda_1^w(G))$ for finite abelian groups which
  were considerably improved by Kaiblinger~\cite{Kaiblinger(2010)} for finite cyclic groups.

  Next we show that $\Lambda_1^w(G) = \Lambda^w(G)$ holds for finite abelian $G$. The
  classical determinant $\det_{\IC G}$ induces an isomorphism
  $K_1(\IC G) \xrightarrow{\cong} \IC G^{\times}$. The Fuglede-Kadison determinant
  $\det_{\caln(G)}$ induces a homomorphism $K_1(\IC G) \to \{r \in \IR \mid r > 0\}$. We
  have $\det_{\IZ G}(A) = \det_{\IC G}(A)$ for $A \in M_{r,r}(\IZ G)$.  For
    $A \in M_{r,r}(\IZ G)$ the map $r_A^{(2)}$ is a weak isomorphism if and only if it is
        an isomorphism, or, equivalently, $\det_{\IZ G}(A) = \det_{\IC G}(A)$ is a unit in
        $\IC G$. This implies for $A \in M_{r,r}(\IZ G)$ for which $r_A^{(2)}$ is a weak
        isomorphisms, that for $d := \det_{\IZ G}(A)$ the map $r_d^{(2)}$ is a weak
        isomorphism satisfying $\det_{\caln(G)}(r_A^{(2)}) = \det_{\caln(G)}(r_d^{(2)})$.
        Hence we get $\Lambda_1^w(G) \le \Lambda^w(G)$ and therefore $\Lambda_1^w(G) = \Lambda^w(G)$.

 In general we have $\Lambda_1(\IZ/n) \not= \Lambda_1^w(\IZ/n)$ and
 $\Lambda(\IZ/n) \not= \Lambda^w(\IZ/n)$, see 
Lemma~\ref{lem:elementary_properties_of_Lambda(G)}~\eqref{lem:elementary_properties_of_Lambda(G):|G|_le_2}.
\end{remark}

Computations for finite dihedral groups can be found in~\cite{Boerkoel-Pinner(2018)}.

%%%%%%%%%%%%%%%%%%%%%%%%%%%%%%%%%%%%%%%%%%%%%%%%%%%%%%%%%%%%%%%%%%%%%%%%%%%%%%%%%
%%%%%%%%%%%%%%%% Section 6: Torsionfree amenable groups  %%%%%%%%%%%%%%%%%%%%%%%%%%%%%%%%%%%%
%%%%%%%%%%%%%%%%%%%%%%%%%%%%%%%%%%%%%%%%%%%%%%%%%%%%%%%%%%%%%%%%%%%%%%%%%%%%%%%%%

\typeout{-------------   Section 6:  Torsionfree elementary amenable groups ----------------------------}

\section{Torsionfree elementary amenable groups}
\label{sec:Torsionfree_elementary_amenable_groups}

Throughout this  section let $G$ be an
amenable group for which $ \IQ G$ has no non-trivial zero-divisor. Examples for $G$ are
torsionfree elementary amenable groups,
see~\cite[Theorem~1.2]{Kropholler-Linnell-Moody(1988)},~\cite[Theorem~2.3]{Linnell(2006)}.
Then $\IQ G$ has a skewfield of fractions $S^{-1} \IQ G$ given by the Ore localization with
respect to the multiplicative closed subset $S$ of non-trivial elements in $\IQ G$,
see~\cite[ Example~8.16 on page~324]{Lueck(2002)}.

% The proof of the next result is analogously to the one 
% in~\cite[Lemma~7.10]{Lueck(2018)},  if we take $F = \IQ$ and $V$ to  be trivial 
% $1$-dimensional representation there
% and notice  that 
% $G$ satisfies the Determinant Conjecture~\ref{con:Determinant_Conjecture} by
% Remark~\ref{rem:status_of_Determinant_Conjecture},  
% and hence the  Fuglede-Kadison determinants of
% both $r_{A}^{(2)}$ and $r^{(2)}_B$ are  well-defined real numbers.

% \begin{lemma}[Estimate in terms of minors]
% \label{lem:Estimate_in_terms_of_minors}
% Consider a matrix $A$ over $\IQ G$.
% Let $B$ be a square submatrix of $A$ of maximal size $k$ such that the map $r_B \colon \IQ G^k \to \IQ G^k$ is
% injective.

% Then:

% \begin{enumerate}

% \item\label{lem:Estimate_in_terms_of_minors:rank}
% The rank of $A$ over the skew field $S^{-1}\IQ G$ is $k$;

% \item\label{lem:Estimate_in_terms_of_minors:rank_and_weak}
% The morphism
% \[
% r_B^{(2)} \colon L^2(G)^k \to L^2(G)^k 
% \]
% is  a weak isomorphism;

% \item\label{lem:Estimate_in_terms_of_minors:Fuglede_Kadison}
% We have 
% \[
% {\det}_{\caln(G)}(r_A^{(2)})
% \ge 
% {\det}_{\caln(G)}(r_B^{(2)}).
% \]

% \end{enumerate}
% \end{lemma}

% We get directly from Lemma~\ref{lem:Estimate_in_terms_of_minors}.

% \begin{theorem}\label{the_torsion-free_amenable_groups}
% Let $G$ be a torsionfree amenable group whose group ring $\IQ G$ has no non-trivial
% zero-divisor, e.g., a torsionfree elementary amenable group. Then we get
% \[
% \Lambda(G) = \Lambda^w(G).
% \]
% \end{theorem}

Next want to define a homomorphism
\begin{eqnarray}
\Delta \colon K_1(S^{-1}\IQ G) \to \IR^{>0}
\label{homomorphism_Delta}
\end{eqnarray}
as follows. Consider any natural number $r$ and a matrix $A \in GL_r(S^{-1}\IQ G)$.  
We can choose $a \in \IQ G$ with $a \not= 0$ such that $A[a] := (a \cdot I_r)
\cdot A$ belongs to $M_{r,r}(\IQ G)$, where $(a \cdot I_r)$ is the diagonal
$(r,r)$-matrix whose entries on the diagonal are all equal to $a$. Since 
$G$ satisfies the Determinant Conjecture~\ref{con:Determinant_Conjecture} by
Remark~\ref{rem:status_of_Determinant_Conjecture},  
the  Fuglede-Kadison determinants of
both $r_{A[a]} \colon L^2(G)^r \to L^2(G)^r$ and $r_{a} \colon L^2(G)^r \to L^2(GF)^r$ are  well-defined real numbers.
If $[A]$ denotes the class represented by $A$ in $K_1(S^{-1}\IQ G)$, we want to define
\[
\Delta([A]) := \frac{{\det}_{\caln(G)}\bigl(r_{A[a]}^{(2)}\bigr)}{{\det}_{\caln(G)}\bigl(r_{a\cdot I_r}^{(2)}\bigr)}.
\]
Note for the sequel that $r_{A[a]} \colon L^2(G)^r \to L^2(G)^r$ and
$r_{a \cdot I_r} \colon L^2(G)^r \to L^2(G)^r$ are weak isomorphisms by
Lemma~\ref{lem:det(C[G]}~\eqref{lem:det(C[G]:equivalent}.
The proof that that this is a well-defined homomorphism of abelian groups can be found in
in~\cite{Lueck(2018)} on the pages following (7.14), take $F = \IQ$ and $V$ to be
the trivial $1$-dimensional representation there.

There is a Dieudonne determinant for invertible matrices over a skewfield $K$
  which takes values in the
  abelianization of the group of units of the skewfield $K^{\times}/[K^{\times},K^{\times}]$ 
   and induces an isomorphism, see~\cite[Corollary~4.3 in page~133]{Silvester(1981)}
   \begin{eqnarray}
    {\det}_D \colon K_1(K) 
    & \xrightarrow{\cong} &
    K^{\times}/[K^{\times},K^{\times}].
   \label{Dieudonne_det_iso}
 \end{eqnarray}
   The inverse 
  \begin{eqnarray}
  \iota \colon  K^{\times}/[K^{\times},K^{\times}] 
  & \xrightarrow{\cong} &  
  K_1(K)
  \label{iota_K}
\end{eqnarray}
sends the class of a unit to the class of the corresponding $(1,1)$-matrix. 
In the sequel $K$ is chosen to be $S^{-1}\IQ G$.

The next result is a special case of~\cite[Lemma~7.23]{Lueck(2018)}, take $F = \IQ$ and $V$ to be the trivial $1$-dimensional representation.

\begin{lemma}\label{lem:det(C[G]}
Consider any matrix $A \in M_{r,r}(\IQ G)$. Then

\begin{enumerate}
\item\label{lem:det(C[G]:equivalent}
The following statements are equivalent:
\begin{enumerate}
\item\label{lem:det(C[G]:equivalent:r_A_inj}
$r_A \colon \IQ G^r \to \IQ G^r$ is injective;
\item\label{lem:det(C[G]:equivalent:r_A_S_inj}
$r_A \colon S^{-1}\IQ G^r \to S^{-1}\IQ G^r$ is injective;
\item\label{lem:det(C[G]:equivalent:r_A_S_bij}
$r_A \colon S^{-1}\IQ G^r \to S^{-1}\IQ G^r$ is bijective, or, equivalently $A$ becomes invertible over $S^{-1}\IQ G$;
\item\label{lem:det(C[G]:equivalent:r_A(2)_inj}
$r_A^{(2)} \colon L^2(G)^r \to L^2(G)^r$ is injective;
\item\label{lem:det(C[G]:equivalent:r_A(2)_weak}
$r_A^{(2)} \colon L^2(G)^r \to L^2(G)^r$ is a weak isomorphism;
\end{enumerate}
\item\label{lem:det(C[G]:dets}
If one of the equivalent conditions above is satisfied,  then $r_A^{(2)}$ is a weak
  isomorphism of determinant class and we get the equation
\[
{\det}_{\caln(G)}(r_A^{(2)})  =\Delta \circ \iota ({\det}_{D}(A)),
\]
where the homomorphisms $\Delta$ and $\iota$ have been defined in~\eqref{homomorphism_Delta}
and~\eqref{iota_K}. In particular ${\det}_{\caln(G)}(r_A^{(2)})$ agrees
with the quotient $\frac{{\det}_{\caln(G)}(r^{(2)}_x)}{{\det}_{\caln(G)}(r_y^{(2)})}$ 
for two appropriate elements $x,y \in \IQ G$ with $x,y \not= 0$.
\end{enumerate}
\end{lemma}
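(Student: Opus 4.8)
The plan is to run everything through the tower of rings $\IQ G \subseteq S^{-1}\IQ G \subseteq \calu(G)$, where $\calu(G)$ denotes the algebra of operators affiliated to $\caln(G)$. The first inclusion is the Ore localization already at hand; the second exists because for amenable $G$ each nonzero element of $\IQ G$ is a non-zero-divisor in $\caln(G)$ and hence becomes invertible in $\calu(G)$ (this is the dimension-theoretic input behind the Ore picture of~\cite{Lueck(2002)}), so the universal property of the Ore localization produces $S^{-1}\IQ G \hookrightarrow \calu(G)$. With this tower in place each of the five conditions becomes a statement about $r_A$ over one of these rings or on $L^2(G)^r$, and I would prove part~\eqref{lem:det(C[G]:equivalent} by closing the cycle (a)$\Rightarrow$(b)$\Rightarrow$(c)$\Rightarrow$(e)$\Rightarrow$(d)$\Rightarrow$(a).

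Here (a)$\Rightarrow$(b) is immediate from exactness of the flat base change $S^{-1}\IQ G \otimes_{\IQ G} -$, under which $\ker(r_A)$ localizes to $\ker$ of the localized map; (b)$\Rightarrow$(c) is finite-dimensional linear algebra over the skewfield $K = S^{-1}\IQ G$, where an injective square endomorphism automatically has full rank and so is bijective. For (c)$\Rightarrow$(e) I would observe that $K \hookrightarrow \calu(G)$ carries invertibility of $A$ over $K$ to $A \in GL_r(\calu(G))$ (its inverse already has entries in $K \subseteq \calu(G)$), and that $A \in GL_r(\calu(G))$ is equivalent to $r_A^{(2)}$ being a weak isomorphism by the standard description of weak isomorphisms through the affiliated operators in~\cite{Lueck(2002)}. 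The two remaining arrows are trivial: a weak isomorphism is in particular injective, giving (e)$\Rightarrow$(d), and restricting the injective operator $r_A^{(2)}$ to the dense subspace $\IQ G^r \subseteq L^2(G)^r$ gives (d)$\Rightarrow$(a).

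For part~\eqref{lem:det(C[G]:dets} I would first settle determinant class: writing $A = N^{-1} A'$ with $A' \in M_{r,r}(\IZ G)$ and $N \in \IN$, the group $G$ satisfies the Determinant Conjecture~\ref{con:Determinant_Conjecture} by Remark~\ref{rem:status_of_Determinant_Conjecture} (it is amenable, hence sofic), so $r_{A'}^{(2)}$ is of determinant class, and rescaling by $N^{-1}$ preserves this; thus $r_A^{(2)}$ is a weak isomorphism of determinant class. The displayed formula is then almost formal: $\iota({\det}_D(A)) = [A]$ in $K_1(K)$ since $\iota$ from~\eqref{iota_K} inverts the Dieudonn\'e isomorphism, and evaluating the definition~\eqref{homomorphism_Delta} of $\Delta$ with the admissible choice $a = 1$ (legitimate because $A$ already lies in $M_{r,r}(\IQ G)$) gives $\Delta([A]) = {\det}_{\caln(G)}(r_A^{(2)})/{\det}_{\caln(G)}(r_{I_r}^{(2)}) = {\det}_{\caln(G)}(r_A^{(2)})$. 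The concluding clause follows by representing ${\det}_D(A) \in K^{\times}/[K^{\times},K^{\times}]$ by a single unit $x y^{-1}$ with $x,y \in \IQ G \setminus \{0\}$ and using that $\Delta \circ \iota$ is a group homomorphism, whence ${\det}_{\caln(G)}(r_A^{(2)}) = {\det}_{\caln(G)}(r_x^{(2)})/{\det}_{\caln(G)}(r_y^{(2)})$.

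The main obstacle is the algebra--analysis bridge inside part~\eqref{lem:det(C[G]:equivalent}: constructing the embedding $S^{-1}\IQ G \hookrightarrow \calu(G)$ and identifying $A \in GL_r(\calu(G))$ with the weak-isomorphism property of $r_A^{(2)}$, both of which rest on the dimension theory of $\caln(G)$ and on the equality of the von Neumann dimensions of kernel and cokernel of a square operator. Everything else reduces to flatness of the Ore localization, linear algebra over a skewfield, clearing of denominators, and the (here merely invoked) well-definedness of $\Delta$ on $K_1(S^{-1}\IQ G)$ established in~\cite{Lueck(2018)}.
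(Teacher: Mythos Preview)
Your proposal is correct. The paper itself does not give a proof of this lemma at all: it simply records that the statement is the special case $F = \IQ$, $V$ the trivial one-dimensional representation, of~\cite[Lemma~7.23]{Lueck(2018)}. So there is nothing to compare against on the level of argument; you have supplied a self-contained proof where the paper only cites.

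That said, your route is precisely the one that underlies the cited result. The tower $\IQ G \subseteq S^{-1}\IQ G \subseteq \calu(G)$, the use of flatness of Ore localization for (a)$\Leftrightarrow$(b), linear algebra over the skewfield for (b)$\Leftrightarrow$(c), and the identification of invertibility in $\calu(G)$ with the weak-isomorphism property for (c)$\Leftrightarrow$(e) are exactly the ingredients of~\cite[Chapter~8 and Lemma~10.39]{Lueck(2002)} and~\cite[Section~7]{Lueck(2018)}. Your treatment of part~\eqref{lem:det(C[G]:dets} --- clearing denominators to reduce determinant class to the Determinant Conjecture for sofic groups, and then reading off the formula by evaluating $\Delta$ at $[A]$ with the choice $a = 1$ --- is likewise the intended argument. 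The only point worth making explicit is that the embedding $S^{-1}\IQ G \hookrightarrow \calu(G)$ needs the amenability hypothesis (via~\cite[Theorem~6.37]{Lueck(2002)}) to know that a nonzero element of $\IQ G$ acts injectively on $L^2(G)$; you allude to this but it is the one place where the standing assumption on $G$ is genuinely used.
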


    On the first glance Lemma~\ref{lem:det(C[G]}~\eqref{lem:det(C[G]:dets} seems to be
    enough to show $\Lambda^w(G) = \Lambda^w_1(G)$ but this is not the case since we
    would need to replace
    $\frac{{\det}_{\caln(G)}(r^{(2)}_x)}{{\det}_{\caln(G)}(r_y^{(2)})}$ by
  ${\det}_{\caln(G)}(r^{(2)}_x)$. The following example illustrates why we do not know
  whether this is true in general.

  \begin{example}\label{exa:representatives_of_the_Dieudonne-determinant}
  If  $A = \begin{pmatrix} a & b \\ c &    d \end{pmatrix}$ is a $(2,2)$-matrix over a skewfield $K$, its
  Dieudonne determinant
  in $K^{\times}/[K^{\times},K^{\times}]$ is defined to be the class of $-cb$
  if $a = 0$ and to be the class of $ad -aca^{-1}b$ otherwise.  It can happen that the matrix 
  $A$ lives over $\IQ G$, but the obvious representative of the Dieudonne determinant does not. 
  The following example is due to Peter Linnell.  Let G be the metabelian group 
\[
\IZ \wr \IZ  = \langle x_i,y \mid  x_ix_j=x_jx_i, y^{-1}x_iy = x_{i+1} \:\text{for all}\;  i,j \in \IZ\rangle.
\]
Then we have $\IQ G \subset L^1(G) \subset \calu(G)$, and division ring of
quotients for $\IQ G$ is contained in $\calu(G)$, where $\calu(G)$ is the algebra of affiliated operators.
Consider the element $2-x_0 \in \IQ G$.  Then
$ (2-x_0)y(2-x_0)^{-1}$  is not contained in $\IQ G$ by the following observation.  This element is the same as
$y(1-x_1/2)(1-x_0/2)^{-1}$ and now we work inside $L^1(G)$, so we get
$y(1-x_1/2)(1+x_0/2+x_0^2/4+ \cdots)$. So the Dieudonne determinant of the matrix
$A = \begin{pmatrix} 2- x_0 & 1 \\ y &    0 \end{pmatrix}$ is represented by the element
$(2-x_0)y(2-x_0)^{-1}$  which is not contained in $\IQ G$ although all entries
of $A$ belong to $\IQ G$. 
\end{example}

\begin{remark}\label{rem:det_abelian_case}
If in the situation of Lemma~\ref{lem:det(C[G]} the group
$G$ happens to be abelian, then the Dieudonne determinant reduces to the standard determinant
${\det}_{\IQ G}$ for the commutative ring $\IQ G$ 
and it has the property that for a square matrix $A$ over $\IQ G$ its value ${\det}_{\IQ G}(A)$
is an element in $\IQ G$. Morerover, we can replace in Lemma~\ref{lem:det(C[G]}~\eqref{lem:det(C[G]:dets} 
the fraction $\frac{{\det}_{\caln(G)}(r^{(2)}_x)}{{\det}_{\caln(G)}(r_y^{(2)})}$ by
  ${\det}_{\caln(G)}(r^{(2)}_x)$ for some $x \in \IQ G$ with $x \not= 0$.
\end{remark}

We conclude from  Lemma~\ref{lem:det(C[G]}~\eqref{lem:det(C[G]:dets} and Remark~\ref{rem:det_abelian_case}

\begin{lemma}\label{lem:lambda_upper_w(Z_upper_d)_is_lambda_upper_w_1(Z_upper_d)}
We have $\Lambda^w(\IZ^d) = \Lambda^w_1(\IZ^d)$.
\end{lemma}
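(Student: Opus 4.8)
The goal is to prove $\Lambda^w(\IZ^d) = \Lambda^w_1(\IZ^d)$, i.e. that allowing arbitrary square matrices $A$ over $\IZ[\IZ^d]$ (with $r_A^{(2)}$ a weak isomorphism) gives the same infimum of Fuglede--Kadison determinants as restricting to $(1,1)$-matrices. The inequality $\Lambda^w(\IZ^d) \le \Lambda^w_1(\IZ^d)$ is already noted in general after Definition~\ref{def:Lehmers_constants_of_a_group}, so the content is the reverse inequality $\Lambda^w_1(\IZ^d) \le \Lambda^w(\IZ^d)$.

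Let me think about the approach. The key is that $\IZ^d$ is abelian and torsionfree, so $\IQ[\IZ^d]$ is a commutative integral domain (in fact a Laurent polynomial ring), hence the general theory of Section~\ref{sec:Torsionfree_elementary_amenable_groups} applies with $G = \IZ^d$. The plan is to take any square matrix $A \in M_{r,r}(\IZ[\IZ^d])$ with $r_A^{(2)}$ a weak isomorphism and ${\det}_{\caln(\IZ^d)}(r_A^{(2)}) > 1$, and reduce its Fuglede--Kadison determinant to that of a single element of $\IZ[\IZ^d]$ without introducing a denominator. Concretely, I would invoke Lemma~\ref{lem:det(C[G]} together with Remark~\ref{rem:det_abelian_case}: in the abelian case the Dieudonn\'e determinant is the ordinary commutative determinant ${\det}_{\IQ[\IZ^d]}(A)$, which lands in $\IQ[\IZ^d]$, and Remark~\ref{rem:det_abelian_case} tells us we may write ${\det}_{\caln(\IZ^d)}(r_A^{(2)}) = {\det}_{\caln(\IZ^d)}(r_x^{(2)})$ for a single nonzero element $x \in \IQ[\IZ^d]$, with no quotient of two separate determinants.

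The next step is to clear denominators so that $x$ actually lies in $\IZ[\IZ^d]$. Since $A$ has entries in $\IZ[\IZ^d]$, its ordinary determinant $d := {\det}_{\IQ[\IZ^d]}(A)$ already lies in $\IZ[\IZ^d]$ (the determinant of an integral matrix is an integral element), so in fact $x = d \in \IZ[\IZ^d]$ and $x \neq 0$ because $r_A^{(2)}$ is a weak isomorphism (by the equivalence in Lemma~\ref{lem:det(C[G]}~\eqref{lem:det(C[G]:equivalent}, which forces $A$ to be invertible over the field of fractions $S^{-1}\IQ[\IZ^d]$, hence $d \neq 0$). Then $r_d^{(2)} \colon L^2(\IZ^d) \to L^2(\IZ^d)$ is itself injective, i.e. a weak isomorphism, again by Lemma~\ref{lem:det(C[G]}~\eqref{lem:det(C[G]:equivalent}. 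Thus $x = d$ is an honest $(1,1)$-matrix over $\IZ[\IZ^d]$ that is admissible for the infimum defining $\Lambda^w_1(\IZ^d)$, and it satisfies ${\det}_{\caln(\IZ^d)}(r_d^{(2)}) = {\det}_{\caln(\IZ^d)}(r_A^{(2)}) > 1$.

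Putting the pieces together: for every admissible $A$ contributing to $\Lambda^w(\IZ^d)$ we have produced a single-element admissible $d$ contributing to $\Lambda^w_1(\IZ^d)$ with the same determinant value, which yields $\Lambda^w_1(\IZ^d) \le {\det}_{\caln(\IZ^d)}(r_A^{(2)})$; taking the infimum over all such $A$ gives $\Lambda^w_1(\IZ^d) \le \Lambda^w(\IZ^d)$, and combined with the reverse inequality we conclude equality. The main subtlety to handle carefully is the passage from the abstract statement of Lemma~\ref{lem:det(C[G]}~\eqref{lem:det(C[G]:dets} (which a priori only gives a quotient $\frac{{\det}_{\caln(\IZ^d)}(r_x^{(2)})}{{\det}_{\caln(\IZ^d)}(r_y^{(2)})}$) to the single-element form with $y$ absorbed; this is exactly what the commutativity of $\IQ[\IZ^d]$ buys us through Remark~\ref{rem:det_abelian_case}, and verifying that the resulting $x$ can be taken \emph{integral} rather than merely rational is the step where the integrality of the ordinary determinant of an integral matrix is essential.
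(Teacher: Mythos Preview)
Your proof is correct and follows essentially the same route as the paper, which simply cites Lemma~\ref{lem:det(C[G]}~\eqref{lem:det(C[G]:dets} together with Remark~\ref{rem:det_abelian_case}. You make explicit the one step the paper leaves implicit, namely that when $A$ has entries in $\IZ[\IZ^d]$ the commutative determinant $d = {\det}_{\IZ[\IZ^d]}(A)$ lands in $\IZ[\IZ^d]$ (not merely $\IQ[\IZ^d]$), which is exactly what is needed for $d$ to be admissible in the definition of $\Lambda^w_1(\IZ^d)$.
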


\begin{remark}\label{rem:representative_in_abelianized_units}
Example~\ref{exa:representatives_of_the_Dieudonne-determinant} does not rule out the possibility
that for every square matrix  $A$ over $\IQ G$, which is invertible  over $S^{-1}\IQ G$, there exists a non-trivial
element $u \in \IQ G$ such that the Dieudonne determinant of $A$ regarded as invertible matrix over the skewfield
$S^{-1}\IQ G$ in the abelian  group   $S^{-1}\IQ G^{\times}/[S^{-1}\IQ G^{\times},S^{-1}\IQ G^{\times}]$ is represented
by $u$. We neither have a proof for this claim nor a counterexample. This question is also interesting in
connection with the $L^2$-polytope homomorphism appearing in~\cite[Section~3.2]{Friedl-Lueck(2017universal)}.
Moreover, a positive answer implies~\cite[Theorem~5.14]{Kielak(2020)}.
\end{remark}

%%%%%%%%%%%%%%%%%%%%%%%%%%%%%%%%%%%%%%%%%%%%%%%%%%%%%%%%%%%%%%%%%%%%%%%%%%%%%%%%%
%%%%%%%%%%%%%%%%%%%%%%%%%%%Section 7: General Approximation  Results%%%%%%%%%%%%%%%%%%%%%%%%%
%%%%%%%%%%%%%%%%%%%%%%%%%%%%%%%%%%%%%%%%%%%%%%%%%%%%%%%%%%%%%%%%%%%%%%%%%%%%%%%%%

\typeout{-----------------------   Section 7: General Approximation Results---------------------------}

\section{General Approximation  Results}
\label{sec:General_Approximation_Results}

In this section we explain how approximation techniques may help in the future to extend the class of groups
for which one can give a positive answer to Lehmer's problem.  
%%%%%%%%%%%%%%%%%%%%%%%%%%%%%%%%%%%%%%%%%%%%%%%%%%%%%%%%%%%%%%%%%%%%%%%%%%%%%%%%%

\subsection{Approximation Conjecture for Fuglede-Kadison determinants}
\label{subsec:Approximation_Conjecture_for_Fuglede-Kadison_determinants}

We have  the following conjecture  which was formulated as  a question
in~\cite[Question~13.52 on  page~478]{Lueck(2002)}, see also~\cite[Section~15]{Lueck(2016_l2approx)}.

\begin{conjecture}[Approximation Conjecture for Fuglede-Kadison determinants]
\label{con:Approximation_conjecture_for_Fuglede-Kadison_determinants}
Let $G$ be a group together with an inverse system $\{G_i \mid i \in I\}$ of normal subgroups of $G$
directed by inclusion over the directed set $I$ such that $\bigcap_{i \in I} G_i = \{1\}$. Put $Q_i := G/G_i$.
Consider any matrix $A \in M_{r,s}(\IZ G)$. Denote by
$A_i \in M_{r,s}(\IZ Q_i)$ the reduction of $A$ to $\IZ Q_i $ coming from the projection $G \to Q_i$.
 
Then we  get for the Fuglede-Kadison determinants
\begin{eqnarray*}
\lefteqn{{\det}_{\caln(G)}\bigl(r_A^{(2)}\colon L^2(G)^r \to L^2(G)^s\bigr)}
& &
\\ & \hspace{14mm} =  &
\lim_{i \in I}\; {\det}_{\caln(Q_i)}\big(r_{A_i}^{(2)}\colon L^2(Q_i)^r \to L^2(Q_i)^s\bigr).
\end{eqnarray*}
\end{conjecture}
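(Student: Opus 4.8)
\section*{Proof proposal}

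The plan is to rewrite each Fuglede--Kadison determinant as an integral of $\log$ against a spectral density function and then to analyse the convergence of the associated measures. Put $T := \bigl(r_A^{(2)}\bigr)^* \circ r_A^{(2)}$ and $T_i := \bigl(r_{A_i}^{(2)}\bigr)^* \circ r_{A_i}^{(2)}$, and let $F(\lambda) := \tr_{\caln(G)}\bigl(\chi_{[0,\lambda]}(T)\bigr)$ and $F_i(\lambda) := \tr_{\caln(Q_i)}\bigl(\chi_{[0,\lambda]}(T_i)\bigr)$ be the corresponding spectral density functions, so that
\[
{\det}_{\caln(G)}\bigl(r_A^{(2)}\bigr) = \exp\left(\tfrac{1}{2}\int_{0+}^{\infty} \log(\lambda)\, dF(\lambda)\right),
\]
and likewise over each $Q_i$. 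All these measures have total mass $r$ and are supported on a common interval $[0,K]$, where $K$ is a uniform bound on the operator norms provided by the $\ell^1$-norm of the coefficients of $A$ (these control the coefficients of every $A_i$). Thus the conjecture is equivalent to the assertion $\int_{0+}\log\, dF_i \to \int_{0+}\log\, dF$.

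First I would establish convergence of moments. For each fixed $n \ge 0$ the trace $\tr_{\caln(Q_i)}(T_i^n)$ is, up to sign conventions, the coefficient at $1 \in Q_i$ of a fixed word in the entries of $A_i$ and $A_i^*$, and this word involves only finitely many group elements, independent of $i$ once lifted to $G$. Since $\bigcap_{i\in I} G_i = \{1\}$, for $i$ large enough the projection $G \to Q_i$ is injective on this finite set, so the identity coefficients, and hence the traces, satisfy $\tr_{\caln(Q_i)}(T_i^n) \to \tr_{\caln(G)}(T^n)$. As the measures live on the fixed compact $[0,K]$ with bounded total mass, Stone--Weierstrass upgrades moment convergence to weak-$*$ convergence $dF_i \to dF$; together with the approximation theorem for $L^2$-Betti numbers, which supplies $F_i(0) \to F(0)$, one also controls the atoms at $0$ and thereby obtains convergence of the spectral measures on $(0,K]$.

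The easy half is then the upper bound $\limsup_i {\det}_{\caln(Q_i)}(r_{A_i}^{(2)}) \le {\det}_{\caln(G)}(r_A^{(2)})$. Truncating $\log$ from below by $\log_\varepsilon(\lambda) := \max\{\log\lambda,\log\varepsilon\}$ yields a bounded continuous function on $[0,K]$; weak-$*$ convergence gives $\int_{(0,K]}\log_\varepsilon\, dF_i \to \int_{(0,K]}\log_\varepsilon\, dF$, and since $\log \le \log_\varepsilon$ one gets $\limsup_i \int_{0+}\log\, dF_i \le \int_{(0,K]}\log_\varepsilon\, dF$. Letting $\varepsilon \downarrow 0$ and invoking monotone convergence on $(0,K]$ finishes this direction. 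Here the Determinant Conjecture for the $Q_i$, which holds once it holds for $G$ by Remark~\ref{rem:status_of_Determinant_Conjecture}, is what guarantees ${\det}_{\caln(Q_i)}(r_{A_i}^{(2)}) \ge 1$, so the integrals are finite and the $\limsup$ is meaningful.

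The main obstacle is the reverse inequality $\liminf_i \int_{0+}\log\, dF_i \ge \int_{0+}\log\, dF$. Weak-$*$ convergence does not, by itself, prevent spectral mass of the $F_i$ from leaking towards $0$, precisely where $\log$ is unbounded below; the integrals could therefore drop in the limit. What is needed is a \emph{uniform} integrability estimate of the form $\int_{0+}^{\delta} |\log\lambda|\, dF_i(\lambda) \to 0$ as $\delta \downarrow 0$, uniformly in $i$. This is tantamount to a uniform Novikov--Shubin-type decay of the $F_i$ near the origin, and it is exactly the point at which no argument is known in full generality, which is why the statement remains a conjecture. Under extra hypotheses it becomes provable: for instance when $G$ is amenable, or whenever one has a uniform spectral gap or a uniform logarithmic bound on the small eigenvalues of the $T_i$, the uniform integrability holds and the two inequalities combine to give the desired limit.
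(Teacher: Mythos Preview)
The statement you were asked to prove is a \emph{conjecture}; the paper does not prove it and explicitly says its status is ``very poor,'' known only for virtually cyclic $G$. So there is no ``paper's own proof'' to compare against, and you are right to conclude that the argument cannot be closed in general. Your write-up is really an analysis of where the obstruction lies, and on that level it is accurate and matches the paper closely: the reduction to positive operators, the uniform operator-norm bound, convergence of traces of polynomials in $T$ via the residual condition $\bigcap G_i=\{1\}$, and the resulting one-sided inequality $\limsup_i {\det}_{\caln(Q_i)}(r_{A_i}^{(2)}) \le {\det}_{\caln(G)}(r_A^{(2)})$ are exactly the content of the paper's Sub-Approximation Theorem (Theorem~\ref{the:subApproximation}). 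Your identification of the missing lower bound as a uniform-integrability / Novikov--Shubin issue near $0$ is the standard and correct diagnosis.

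Two inaccuracies are worth flagging. First, your appeal to Remark~\ref{rem:status_of_Determinant_Conjecture} to pass the Determinant Conjecture from $G$ to the quotients $Q_i$ is not justified: that remark only covers quotients with \emph{finite} kernel, whereas the $G_i$ here need not be finite. In the paper's Theorem~\ref{the:subApproximation} the Determinant Conjecture for the $Q_i$ is simply taken as a hypothesis. Second, your claim that the conjecture ``becomes provable \ldots\ when $G$ is amenable'' overstates what is known; the paper says it is open beyond virtually cyclic groups, and amenability alone does not supply the uniform small-eigenvalue control you describe.
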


Unfortunately, the status of
Conjecture~\ref{con:Approximation_conjecture_for_Fuglede-Kadison_determinants} is very
poor, it does follow for virtually cyclic groups $G$ from the special case proved
in~\cite[Lemma~13.53 on page~478]{Lueck(2002)}, but we are not aware of a proof for a
group which is not virtually cyclic. Nevertheless there is hope that 
Conjecture~\ref{con:Approximation_conjecture_for_Fuglede-Kadison_determinants} is true
for torsionfree groups.

\begin{remark}[Integer coefficients are necessary]\label{rem:integer_coefficient_necessary}
  There are counterexamples to
  Conjecture~\ref{con:Approximation_conjecture_for_Fuglede-Kadison_determinants} if one
  replaces the coefficients in $\IZ$ by coefficients in $\IC$, see~\cite[Example~13.69 on
  page~481]{Lueck(2002)}.  This is in contrast to
  Theorem~\ref{the:Approximating_Mahler_measures_for_matrices_over_IC[IZd]_by_matrices_over_IC[IZ]}.
\end{remark}

Conjecture~\ref{con:Approximation_conjecture_for_Fuglede-Kadison_determinants} 
has the following interesting consequence.

\begin{theorem}[Consequence of the Approximation Theorem Conjecture for Fuglede-Kadison determinants]
\label{the:consequence_Approximation}
Let $G$ be a group together with an inverse system $\{G_i \mid i \in I\}$ of normal subgroups of $G$
directed by inclusion over the directed set $I$ such that $\bigcap_{i \in I} G_i = \{1\}$. Put $Q_i := G/G_i$.
Assume that each group $Q_i$ satisfies the Determinant Conjecturen~\ref{con:Determinant_Conjecture}.
Moreover, suppose that $G$ satisfies the Approximation Conjecture for Fuglede-Kadison 
determinants~\ref{con:Approximation_conjecture_for_Fuglede-Kadison_determinants}. Then
\begin{enumerate}
\item\label{the:consequence_Approximation:Lambda_andLambda_1}
We have
\begin{eqnarray*}
\Lambda(G) 
& \ge & 
\limsup_{i \in I} \Lambda(Q_i);
\\
\Lambda_1(G) 
& \ge &
\limsup_{i \in I} \Lambda_1(Q_i);
\end{eqnarray*}

\item\label{the:consequence_Approximation:Lambdaw}
  Suppose that for any element
  $A \in   M_{r,s}(\IZ G)$ there exists a constant $\beta(A) >0$ and an index $i_0(A) \in I$ such
  that the implication 
  $\dim_{\caln(Q_i)}\bigl(\ker(r_{A_i}^{(2)})\bigr) > 0   \implies \dim_{\caln(Q_i)}\bigl(\ker(r_{A_i}^{(2)})\bigr) \ge \beta(A)$ 
  holds for   all $i \in I$ with $i \ge i_0(A)$. (We will recall the notion of the von Neumann dimension $\dim_{\caln(G)}$ in  
Appendix~\ref{sec:L2-invariants}.)

Then we have
\[
\Lambda^w(G) \ge \limsup_{i \in I} \Lambda^w(Q_i);
\]
\item\label{the:consequence_Approximation:Lambda_1w} 
Suppose that for any element $A
  \in M_{1,1}(\IZ G)$ there exists a constant $\beta_1(A) >0$ and an index $i_0(A) \in I$
  such that the implication 
  $\dim_{\caln(Q_i)}\bigl(\ker(r_{A_i}^{(2)})\bigr) > 0   \implies \dim_{\caln(Q_i)}\bigl(\ker(r_{A_i}^{(2)})\bigr) \ge \beta_1(A)$ 
  holds for   all $i \in I$ with $i \ge i_0(A)$.

Then we have
\[
\Lambda^w_1(G) \ge \limsup_{i \in I} \Lambda_1^w(Q_i);
\]
\end{enumerate}
\end{theorem}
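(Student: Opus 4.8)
The plan is to handle all four constants by one mechanism: for an admissible matrix $A$ over $\IZ G$ I will show that its reduction $A_i$ over $\IZ Q_i$ is admissible for the corresponding constant of $Q_i$ for all sufficiently large $i$, and then push the resulting lower bounds to the limit via Conjecture~\ref{con:Approximation_conjecture_for_Fuglede-Kadison_determinants}. Fix $A$ and abbreviate $d_i := {\det}_{\caln(Q_i)}(r_{A_i}^{(2)})$ and $L := {\det}_{\caln(G)}(r_A^{(2)})$, so that the Approximation Conjecture reads $\lim_{i \in I} d_i = L$. The underlying bookkeeping is elementary: if $d_i \ge c_i$ holds for all $i$ beyond some index, where $c_i$ is one of $\Lambda(Q_i)$, $\Lambda_1(Q_i)$, $\Lambda^w(Q_i)$, $\Lambda^w_1(Q_i)$, then applying $\limsup_i$ and using $\limsup_i d_i = \lim_i d_i = L$ gives $L \ge \limsup_i c_i$.

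For assertion~\eqref{the:consequence_Approximation:Lambda_andLambda_1} I would fix $A \in M_{r,s}(\IZ G)$ with $L > 1$; such matrices exist --- e.g.\ the $(1,1)$-matrix $(2)$ has $L = 2$ --- so the infima defining $\Lambda(G)$ and $\Lambda_1(G)$ in Definition~\ref{def:Lehmers_constants_of_a_group} range over nonempty sets. Since $\lim_i d_i = L > 1$, there is an index beyond which $d_i > 1$; for those $i$ the matrix $A_i$ competes in the definition of $\Lambda(Q_i)$, so $d_i \ge \Lambda(Q_i)$. The bookkeeping yields $L \ge \limsup_i \Lambda(Q_i)$, and taking the infimum over all admissible $A$ gives $\Lambda(G) \ge \limsup_i \Lambda(Q_i)$. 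Running the argument only over $(1,1)$-matrices gives the statement for $\Lambda_1$. Note that this part uses neither the Determinant Conjecture nor the gap hypotheses.

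The weak-isomorphism assertions~\eqref{the:consequence_Approximation:Lambdaw} and~\eqref{the:consequence_Approximation:Lambda_1w} carry the main difficulty, and it is exactly the one flagged in Remark~\ref{rem:matrices}: admissibility for $\Lambda^w(Q_i)$ requires $r_{A_i}^{(2)}$ to be a weak isomorphism, yet injectivity of $r_A^{(2)}$ over $G$ need not be inherited by the reductions. The device to control this is the approximation theorem for von Neumann dimensions of kernels, which is available in the present generality precisely because each $Q_i$ satisfies the Determinant Conjecture~\ref{con:Determinant_Conjecture}; see~\cite{Schick(2001b)} and~\cite[Chapter~13]{Lueck(2002)}. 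It supplies
\[
\lim_{i \in I} \dim_{\caln(Q_i)}\bigl(\ker(r_{A_i}^{(2)})\bigr) = \dim_{\caln(G)}\bigl(\ker(r_A^{(2)})\bigr).
\]

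Now take a square $A \in M_{r,r}(\IZ G)$ with $r_A^{(2)}$ a weak isomorphism and $L > 1$. Then the right-hand side above vanishes, so $\dim_{\caln(Q_i)}(\ker(r_{A_i}^{(2)})) \to 0$; feeding this into the gap hypothesis --- these numbers are $0$ or at least $\beta(A)$ once $i \ge i_0(A)$ --- forces $\dim_{\caln(Q_i)}(\ker(r_{A_i}^{(2)})) = 0$, i.e.\ $r_{A_i}^{(2)}$ is a weak isomorphism, for all large $i$. Since also $d_i > 1$ for large $i$, the matrix $A_i$ is admissible for $\Lambda^w(Q_i)$, so $d_i \ge \Lambda^w(Q_i)$ there; the bookkeeping and the infimum over admissible $A$ then give $\Lambda^w(G) \ge \limsup_i \Lambda^w(Q_i)$. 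Assertion~\eqref{the:consequence_Approximation:Lambda_1w} is the same argument restricted to $(1,1)$-matrices, using the gap constant $\beta_1(A)$. The one genuinely load-bearing input beyond the assumed Approximation Conjecture is this dimension-approximation theorem together with the gap hypothesis; without the latter the kernels could stay positive while shrinking to $0$, and the reductions would fail to be admissible.
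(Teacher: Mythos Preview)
Your argument is correct and follows essentially the same route as the paper's proof: part~\eqref{the:consequence_Approximation:Lambda_andLambda_1} is declared ``obvious'' in the paper, and for parts~\eqref{the:consequence_Approximation:Lambdaw} and~\eqref{the:consequence_Approximation:Lambda_1w} the paper likewise invokes the dimension approximation result (specifically~\cite[Theorem~13.19~(2) on page~461]{Lueck(2002)}) to get $\lim_{i} \dim_{\caln(Q_i)}(\ker(r_{A_i}^{(2)})) = 0$ and then concludes eventual injectivity. Your write-up is in fact more explicit than the paper's in two respects: you spell out why the gap hypothesis is the mechanism that converts ``limit zero'' into ``eventually zero'' (the paper leaves this implicit), and you note that part~\eqref{the:consequence_Approximation:Lambda_andLambda_1} needs neither the Determinant Conjecture for the $Q_i$ nor the gap hypotheses.
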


\begin{proof}~\eqref{the:consequence_Approximation:Lambda_andLambda_1}
This is obvious.
\\[2mm]~\eqref{the:consequence_Approximation:Lambdaw} and~\eqref{the:consequence_Approximation:Lambda_1w}
Consider a matrix $A \in M_{r,r}(\IZ G)$ such that $r_A^{(2)}$ is injective, or, equivalently,  
$\dim_{\caln(G)}\bigl(\ker(r_{A}^{(2)})\bigr)  = 0$.
We conclude  from~\cite[Theorem~13.19~(2) on page~461]{Lueck(2002)}
\[
0 = \dim_{\caln(G)}\bigl(\ker(r_{A}^{(2)})\bigr) = \lim_{i \in I} \dim_{\caln(G)}\bigl(\ker(r_{A_i}^{(2)})\bigr).
\]
Hence there exists $i_0$ such that $\dim_{\caln(G)}\bigl(\ker(r_{A_i}^{(2)})\bigr) = 0$ holds for $i \ge i_0$
and hence that $r_{A_i}^{(2)}$ is injective for $i \ge i_0$. 
\end{proof}

\begin{remark}[Atiyah Conjecture]\label{rem:Atiyahs_Conjecture}
  A version of the \emph{Atiyah Conjecture} says for a group $G$ for which there exists a
  natural number $D$ such that the order of any finite subgroup of $G$ divides $D$ that
  for any element $A \in M_{r,s}(\IZ G)$ we get
  \[
  D \cdot \dim_{\caln(G)}\bigl(\ker(r_A^{(2)} \colon L^2(G)^r \to L^2(G)^s)\bigr) \in \IZ.
  \]
  If $G$ happens to be torsionfree, we can choose $D = 1$ and get the implication
  $\dim_{\caln(G)}\bigl(\ker(r_A^{(2)} \colon L^2(G)^r \to L^2(G)^s)\bigr) \in \IZ$.

  Suppose that there exists a natural number $D$ such that for every $i \in I$ and every finite
  subgroup $H \subseteq Q_i$ the order $|H|$ divides $D$. Then the implication
  $\dim_{\caln(Q_i)}\bigl(\ker(r_{A_i}^{(2)})\bigr) > 0 \implies
  \dim_{\caln(Q_i)}\bigl(\ker(r_{A_i}^{(2)})\bigr) \ge \beta(A)$ appearing in
  assertions~\eqref{the:consequence_Approximation:Lambdaw}
   and~\eqref{the:consequence_Approximation:Lambda_1w} of
  Theorem~\ref{the:consequence_Approximation} is automatically satisfied if each group $Q_i$
  satisfies the Atiyah Conjecture above, just take $\beta(A) := \frac{1}{D}$. 

  A survey on the Atiyah Conjecture and the groups for which it is known to be true can be
  found in~\cite[Chapter~10]{Lueck(2002)}. We mention that the Atiyah Conjecture holds for
  $G$ if $G$ is elementary amenable and there is a upper bound on the orders of the finite subgroups of  $G$.
\end{remark}

We conclude from Theorem~\ref{the:consequence_Approximation}
and Remark~\ref{rem:Atiyahs_Conjecture}

\begin{theorem}[Residually torsionfree elementary amenable groups]%
\label{the:Residually_torsionfree_elementary_amenable_groups}
  Let $G$ be a residually torsionfree elementary amenable group in the sense that we can
  find a sequence of in $G$ normal subgroups
  $G = G_0 \supseteq G_1 \supseteq G_2 \supseteq \cdots$ such that $G/G_n$ is torsionfree
  elementary amenable for $n = 0,1,2, \ldots$ and $\bigcap_{n \ge 0} G_n = \{1\}$.
  Suppose that $G$ satisfies the Approximation
  Conjecture~\ref{con:Approximation_conjecture_for_Fuglede-Kadison_determinants} for
  Fuglede-Kadison determinants.

  Then
  \begin{eqnarray*}
    \Lambda^w(G)  & \ge & \limsup_{i \in I} \Lambda^w(Q_i);
    \\
    \Lambda^w_1(G)  & \ge & \limsup_{i \in I} \Lambda_1^w(Q_i).
  \end{eqnarray*}
\end{theorem}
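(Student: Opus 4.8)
The plan is to deduce the theorem as a direct application of Theorem~\ref{the:consequence_Approximation}, regarding the descending chain $G = G_0 \supseteq G_1 \supseteq \cdots$ as an inverse system of normal subgroups indexed by $I = \IN$ with its standard ordering, so that $Q_n = G/G_n$. The crucial structural input is that, by hypothesis, every quotient $Q_n$ is torsionfree elementary amenable.

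First I would verify the two standing hypotheses of Theorem~\ref{the:consequence_Approximation}. The Approximation Conjecture~\ref{con:Approximation_conjecture_for_Fuglede-Kadison_determinants} for $G$ is assumed outright. For the Determinant Conjecture, note that a torsionfree elementary amenable group is in particular amenable, hence sofic, so each $Q_n$ lies in the class $\cald$ by Remark~\ref{rem:status_of_Determinant_Conjecture}~\eqref{rem:status_of_Determinant_Conjecture:sofic_groups}. Thus parts~\eqref{the:consequence_Approximation:Lambda_andLambda_1}, \eqref{the:consequence_Approximation:Lambdaw}, and~\eqref{the:consequence_Approximation:Lambda_1w} of Theorem~\ref{the:consequence_Approximation} are all available in principle.

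Next I would supply the dimension-gap hypothesis needed for parts~\eqref{the:consequence_Approximation:Lambdaw} and~\eqref{the:consequence_Approximation:Lambda_1w}. Here torsionfreeness of the quotients is decisive: since each $Q_n$ has no nontrivial finite subgroup, the bound on orders of finite subgroups may be taken to be $D = 1$, and the Atiyah Conjecture holds for torsionfree elementary amenable groups. Consequently, for every $A \in M_{r,s}(\IZ G)$ the von Neumann dimension $\dim_{\caln(Q_n)}\bigl(\ker(r_{A_n}^{(2)})\bigr)$ is an integer, so whenever it is positive it is already $\ge 1$. This is exactly the mechanism spelled out in Remark~\ref{rem:Atiyahs_Conjecture}: the required implication holds with the uniform constants $\beta(A) = \beta_1(A) = 1$ and index $i_0(A) = 0$, for both $(r,r)$-matrices and $(1,1)$-matrices.

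With all hypotheses in place, I would simply invoke Theorem~\ref{the:consequence_Approximation}~\eqref{the:consequence_Approximation:Lambdaw} and~\eqref{the:consequence_Approximation:Lambda_1w} to obtain the two inequalities $\Lambda^w(G) \ge \limsup_{n} \Lambda^w(Q_n)$ and $\Lambda^w_1(G) \ge \limsup_{n} \Lambda^w_1(Q_n)$. Since the argument only assembles previously established results, there is no real obstacle; the one point demanding care is that it is the torsionfreeness of the \emph{quotients} $Q_n$---not of $G$ itself---that legitimizes the choice $D = 1$ and hence the clean dimension gap.
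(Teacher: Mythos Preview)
Your proposal is correct and matches the paper's approach exactly: the paper simply states that the theorem follows from Theorem~\ref{the:consequence_Approximation} together with Remark~\ref{rem:Atiyahs_Conjecture}, and your write-up is a faithful unpacking of precisely those two ingredients. Your emphasis that it is the torsionfreeness of the quotients $Q_n$ (giving $D=1$ and hence the Atiyah Conjecture via the elementary amenable case noted in Remark~\ref{rem:Atiyahs_Conjecture}) that supplies the dimension gap is the right point to highlight.
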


\begin{example}[Examples of (virtually) residually torsionfree nilpotent]%
\label{exa::(virtually)_residually_torsionfree_nilpotent_groups}
  Free groups are examples of residually
  torsionfree nilpotent groups, see~\cite[\S~2]{Magnus(1935)}.

  Let $M$ be a compact orientable irreducible $3$-manifold whose boundary is empty or is a
  disjoint union of incompressible tori and whose fundamental group $\pi$ is infinite and not solvable. Then its
  fundamental group $\pi$ is 
  virtually residually torsionfree nilpotent, see  for instance~\cite[page~84]{Aschenbrenner-Friedl-Wilton(2015)}.

  This implies that $\Lambda(\pi) > 1$ if $\pi$  satisfies the Approximation
  Conjecture~\ref{con:Approximation_conjecture_for_Fuglede-Kadison_determinants}
  and $\Lambda^w(H) = \Lambda(\IZ)$ holds for any torsionfree nilpotent group. Note that we cannot conclude
   $\Lambda(\pi) = \Lambda(\IZ)$ since we only know that $\pi$ is 
virtually residually torsionfree nilpotent and not that $\pi$ is residually torsionfree nilpotent.

\end{example}

\begin{remark} In order to apply
  Theorem~\ref{the:Residually_torsionfree_elementary_amenable_groups} one needs to know
  Conjecture~\ref{con:Approximation_conjecture_for_Fuglede-Kadison_determinants} which we
  have already discussed above and also have some information for $\Lambda^w(H)$, for
  nilpotent groups. Not much is known for these groups. Not even the three-dimensional
  Heisenberg group is fully understood.  See for  instance~\cite[Section~5]{Deninger(2011)}.
\end{remark}

%%%%%%%%%%%%%%%%%%%%%%%%%%%%%%%%%%%%%%%%%%%%%%%%%%%%%%%%%%%%%%%%%%%%%%%%%%%%%%%%%

\subsection{Sub-Approximation Theorem}
\label{subsec:Approximation_Theorem}

At least we can  prove an inequality in a situation which is more general than the case of a normal chain
considered in Subsection~\ref{subsec:Approximation_Conjecture_for_Fuglede-Kadison_determinants}.

Given a matrix $A \in M_{r,s}(\IC G)$, we will in the sequel denote by $A^*$ the element in $M_{s,r}(\IC G)$,
whose $(i,j)$-th entry is $a_{j,i}^*$,
where for an element $x = \sum_{g \in G} \lambda_g \cdot g \in \IC G$ we denote by
$x^*$ the element $\sum_{g \in G} \overline{\lambda_g} \cdot g^{-1} \in \IC G$.
With this convention the adjoint $(r_A^{(2)})^*$ of the bounded operator
$r_A^{(2)} \colon L^2(G)^r \to L^2(G)^s$ is $r_{A^*}^{(2)} \colon L^2(G)^s \to L^2(G)^r$.

\begin{theorem}[Sub-Approximation Theorem]\label{the:subApproximation}
Let $G$ be a group.   Suppose that $I$ is a directed set and we have a collection of groups $\{Q_i \mid i \in I\}$
together with group homomorphisms $q_i \colon G \to Q_i$ for each $i \in I$.
 Given $A \in M_{r,s}(\IZ G)$, denote by
$A_i \in M_{r,s}(\IZ Q_i)$ the reduction of $A$ to $\IZ Q_i $ coming from the projection $G \to Q_i$.
Suppose:
\begin{itemize}
\item For any finite subset $F \subseteq G$ there exists an index $i_0(F) \in I$ such that for all $i \ge i_0(F)$ and $f \in F$
the implication $q_i(f) \not= e \implies f \not= e$ holds,
where $e$ denotes the unit element in $G$ and  $Q_i$; (This is automatically satisfied if
there is an inverse system $\{G_i \mid i \in I\}$ of normal subgroups of $G$
directed by inclusion over the directed set $I$ such that $\bigcap_{i \in I} G_i = \{1\}$, $Q_i = G/G_i$ and
$q_i \colon G \to Q_i$ is the projection.)

\item Each group $Q_i$ satisfies the Determinant Conjecture~\ref{con:Determinant_Conjecture}.

\end{itemize}

Then for any element $A \in M_{r,s}(\IZ G)$ we have
\[
{\det}_{\caln(G)}(r_A^{(2)}) \ge \limsup_{i \in I} {\det}_{\caln(Q_i)}(r_{A_i}^{(2)}).
\]

\end{theorem}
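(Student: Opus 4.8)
The plan is to reduce the matrix case to a spectral statement about Fuglede-Kadison determinants and then exploit the upper semicontinuity that integer coefficients and the Determinant Conjecture provide.

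First I would recall the integral formula for the Fuglede-Kadison determinant. For a matrix $A \in M_{r,s}(\IZ G)$, the operator $r_A^{(2)}$ has a polar-type decomposition, and the square of its determinant is computed from the self-adjoint operator $(r_A^{(2)})^* r_A^{(2)} = r_{A^*A}^{(2)}$, where $A^*A \in M_{r,r}(\IZ G)$. Writing $F$ for the spectral density function of this positive operator and $c$ for an upper bound on its operator norm (which we may take uniform in $i$, since the coefficients of $A^*A$ are fixed integers independent of $i$), one has
\[
\ln {\det}_{\caln(G)}(r_A^{(2)}) = \frac{1}{2} \int_{0^+}^{c} \ln(\lambda) \, dF(\lambda),
\]
with an analogous formula over each $Q_i$ using the density function $F_i$ of $r_{A_i^*A_i}^{(2)}$. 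The whole argument thus becomes a comparison between $\int \ln(\lambda)\,dF$ and $\limsup_i \int \ln(\lambda)\,dF_i$ over the common finite interval $[0,c]$.

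Next I would establish the key analytic input: the spectral density functions converge pointwise at continuity points, or more precisely that the moments of $F_i$ converge to those of $F$. This is where the two hypotheses enter. The first hypothesis guarantees that for each fixed $k$, the trace $\tr_{\caln(Q_i)}\bigl((r_{A_i^*A_i}^{(2)})^k\bigr)$ agrees eventually with $\tr_{\caln(G)}\bigl((r_{A^*A}^{(2)})^k\bigr)$: these traces are finite integer-coefficient combinations of the values $\tr_{\caln}(g)$, which are $1$ if the relevant word is $e$ and $0$ otherwise, and the injectivity-on-finite-sets condition ensures the pattern of vanishing is the same in $G$ and in $Q_i$ for large $i$. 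Hence all moments of $F_i$ converge to the moments of $F$, so $F_i \to F$ weakly. The second hypothesis, the Determinant Conjecture for each $Q_i$, controls the behavior near $\lambda = 0$: it furnishes a uniform lower bound preventing spectral mass from escaping to zero in a way that would make $\ln$ non-integrable in the limit.

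The main obstacle, as always with these determinant approximation results, is the logarithmic singularity of $\ln(\lambda)$ at $\lambda = 0$, since weak convergence of $F_i$ alone does not control $\int \ln\,dF_i$. The right tool is the standard estimate that for integer matrices the Determinant Conjecture gives $\int_{0^+}^{c} \ln(\lambda)\, dF_i(\lambda) \ge 0$ after the suitable normalization, which converts the singular behavior into a one-sided bound and yields precisely a $\limsup$ inequality rather than an equality. Concretely, I would split $\int_0^c \ln(\lambda)\, dF_i$ at a small threshold $\epsilon$: on $[\epsilon, c]$ the integrand is bounded and continuous, so weak convergence gives convergence of that piece; on $[0,\epsilon]$ the contribution is negative, and because $\ln {\det}_{\caln(Q_i)}(r_{A_i}^{(2)}) \ge 0$ by the Determinant Conjecture one can only lose determinant mass in the limit, never gain it. Assembling these pieces shows
\[
\ln {\det}_{\caln(G)}(r_A^{(2)}) \ge \limsup_{i \in I} \ln {\det}_{\caln(Q_i)}(r_{A_i}^{(2)}),
\]
and exponentiating finishes the proof. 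I expect the delicate point to be making the uniform $[0,\epsilon]$ bound rigorous in the directed-set setting, which is exactly the argument underlying the approximation theorem of \cite[Chapter~13]{Lueck(2002)} adapted to non-injective structure maps $q_i$.
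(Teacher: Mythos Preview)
Your proposal is correct and follows essentially the same route as the paper: reduce to a positive self-adjoint matrix (the paper uses $AA^*$, you use $A^*A$, which is immaterial), establish a uniform operator norm bound and convergence of traces $\tr_{\caln(Q_i)}(p(A_i)) \to \tr_{\caln(G)}(p(A))$ for polynomials $p$ via the finite-set hypothesis, and then invoke the upper-semicontinuity argument underlying~\cite[Theorem~13.19]{Lueck(2002)}. The paper simply cites that theorem after verifying its hypotheses, whereas you have unwound the $\epsilon$-splitting mechanism explicitly, but the substance is the same.
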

\begin{proof}
We conclude from Theorem~\ref{the:main_properties_of_det}~\eqref{the:main_properties_of_det:det(f)_is_det(fast)}
\begin{eqnarray*}
{\det}_{\caln(G)}(r_A)
& = & 
\sqrt{{\det}_{\caln(G)}(r_{AA^*}^{(2)})};
\\
{\det}_{\caln(Q_i)}(r_{A_i}^{(2)})
& = & 
\sqrt{{\det}_{\caln(Q_i)}(r_{A_iA_i^*}^{(2)})}.
\end{eqnarray*}

Hence we can assume without loss of generality that $r = s$ and that $r_A^{(2)} \colon L^2(G)^r \to L^2(G)^r$ 
and $r_{A_i}^{(2)} \colon L^2(Q_i)^r \to L^2(Q_i)^r$  are positive operators, otherwise replace $A$ by $AA^*$.

We want to apply~\cite[Theorem~13.19 on page~461]{Lueck(2002)}  in the case, where 
$G_i$ in the notation of~\cite[Theorem~13.19 on page~461]{Lueck(2002)} is $Q_i$,
$A_i \in M_{r,r}(\IZ Q_i)$ is the matrix above and $\tr_i = \tr_{\caln(G_i)}$. Then the claim does not follow directly
from the assertions in~\cite[Theorem~13.19 on page~461]{Lueck(2002)} but from the inequality 
\[
\ln\left({\det}_{\caln(G)}(r_A^{(2)})\right) \ge \limsup_{i \in I} \ln\left({\det}_{\caln(Q_i)}(r_{A_i}^{(2)})\right)
\]
appearing at the very end of the proof  of~\cite[Theorem~13.19 on page~465]{Lueck(2002)}.
It remains to check all the assumptions appearing in~\cite[Theorem~13.19 on page~461]{Lueck(2002)}.

Choose a real number $K$ satisfying
\[
K \ge  \sqrt{(2r-1) \cdot r}\cdot \max\left\{ ||A_{j,k}||_1 \mid 1 \le j \le r, 1 \le k \le s\right\}.
\]
Then we also have 
\[
K \ge  \sqrt{(2r-1) \cdot r} \cdot \max\left\{ ||(A_i)_{j,k}||_1 \mid 1 \le j \le r, 1 \le k \le s\right\}.
\]
This implies the  inequalities $||r_A^{(2)}|| \le K$ and $||r_{A_i}^{(2)}|| \le K$
for the operator norms of $r_A^{(2)}$ and $r_{A_i}^{(2)}$. 

Consider a polynomial $p$ with real coefficients. 
Let $F$ be the finite set of elements $g$ in $G$ for which there exists a natural number  $j$ with
$1 \le j \le r$ such that the coefficient of $g$ of the 
$(j,j)$-th entry of $p(A)$ is different from zero. Choose $i_0 \in I$ such for all
$i \ge i_0$ and $f \in F$ the implication $q_i(f) \not= e \implies f \not= e$ holds. This implies
\[
\tr_{\caln(G)}(p(A)) = \tr_{\caln(Q_i)}(p(A_i)) \quad \text{for}\; i \ge i_0.
\]
In particular we get
\[
\tr_{\caln(G)}(p(A)) = \lim_{i \in I} \tr_{\caln(Q_i)}(A_i).
\]
This finishes the proof of Theorem~\ref{the:subApproximation}.
\end{proof}

\begin{remark}[Sub-Approximation and Lehmer's
  problem]\label{rem:sub_approximation_and_Lehmers_problem}
  Theorem~\ref{the:subApproximation} looks more promising than
  Theorem~\ref{the:consequence_Approximation} if one is interested in the
  conclusion~\eqref{the:consequence_Approximation:Lambda_andLambda_1} of
  Theorem~\ref{the:consequence_Approximation} only. 
  Theorem~\ref{the:consequence_Approximation} applies to more general systems of group
  homomorphisms $g \to Q_i$ than in Theorem~\ref{the:consequence_Approximation} and it
  does not need in contrast to Theorem~\ref{the:consequence_Approximation} the condition
  that $G$ satisfies the Approximation Conjecture for Fuglede-Kadison
  determinants~\ref{con:Approximation_conjecture_for_Fuglede-Kadison_determinants}, but
  only requires only a milder version to imply the same conclusion as in
  Theorem~\ref{the:consequence_Approximation}~\eqref{the:consequence_Approximation:Lambda_andLambda_1}.
  
  Namely, we have additionally to assume that ${\det}_{\caln(G)}(r_A^{(2)}) > 1$ implies
  the existence of an index $i_0$ such that ${\det}_{\caln(Q_i)}(r_{A_i}^{(2)}) > 1$ holds
  for $i \ge i_0$, because then ${\det}_{\caln(Q_i)}(r_{A_i}^{(2)}) > \Lambda(Q_i)$ holds
  for $i \in I$ with $i \ge i_0$ and hence the inequality 
  $\limsup_{i \in I} {\det}_{\caln(Q_i)}(r_{A_i}^{(2)}) \ge \limsup_{i \in I}\Lambda(Q_i)$
  is true.
  
  Or we have additionally to assume that
  $\limsup_{i \in I} {\det}_{\caln(Q_i)}(r_{A_i}^{(2)}) = 1$ implies
  ${\det}_{\caln(G)}(r_A^{(2)}) = 1$ and that there exists a number $\Lambda > 1$ and an
  index $i_0$ such that $\Lambda(Q_i) \ge \Lambda$ holds for $i \ge i_0$, because then
  either $\limsup_{i \in I} {\det}_{\caln(Q_i)}(r_{A_i}^{(2)}) = 1$ or the inequality
  $\limsup_{i \in I} {\det}_{\caln(Q_i)}(r_{A_i}^{(2)}) \ge \Lambda$ holds.
\end{remark}

%%%%%%%%%%%%%%%%%%%%%%%%%%%%%%%%%%%%%%%%%%%%%%%%%%%%%%%%%%%%%%%%%%%%%%%%%%%%%%%%%
%%%%%%%%%%%%%%%%%%%%%%%%%%%Section 8: Approximation  Results over Z^d%%%%%%%%%%%%%%%%%%%%%%%%%
%%%%%%%%%%%%%%%%%%%%%%%%%%%%%%%%%%%%%%%%%%%%%%%%%%%%%%%%%%%%%%%%%%%%%%%%%%%%%%%%%

\typeout{-----------------------   Section 8: Approximation Results over $\IZ^d$ ---------------------------}

\section{Approximation  Results over $\IZ^d$}
\label{sec:Approximation_Results_over_Zd}

For $G = \IZ^d$ we get  much better approximation results.
Essentially we will generalize the approximation results
of Boyd and Lawton to arbitrary matrices over $\IZ[\IZ^d]$. This will be important for the
proof of Theorem~\ref{the:Finitely_generated_free_abelian_groups}.

\begin{remark}[Approximating Mahler measures for polynomials in several variables by polynomials in one variable]%
\label{rem:Approximating_Mahler_measures_for_polynomials_in_several_variables}
  There is a case, where the inequality in Theorem~\ref{the:subApproximation} becomes an
  equality with $\limsup$ replaced by $\lim$. Namely, let $p(z_1,z_2)$ be a non-trivial
  polynomial with complex coefficients in two variables $z_1$ and $z_2$.  For a natural
  number $k$ let $p(z,z^k) $ be the polynomial with complex coefficients in one variable
  $z$ obtained from $p(z_1,z_2)$ by replacing $z_1 = z$ and $z_2 = z^k$ in $p(z_1,z_2)$.
  This corresponds to the homomorphism $q_k \colon \IZ^2 \to \IZ, \; (n_1,n_2) \mapsto n_1  + k \cdot n_2$.
  If $k$ is large enough, then $p(z,z^k)$ is again non-trivial. We have the formula
  \[{\det}_{\caln(\IZ^2)}\bigl(r_{p(z_1,z_2)}^{(2)} \colon L^2(\IZ^2) \to L^2(\IZ^2)\bigr)
  = \lim_{k \to \infty} {\det}_{\caln(\IZ)}\bigl(r_{p(z,z^k)}^{(2)} \colon L^2(\IZ) \to
  L^2(\IZ)\bigr).
  \]
  Its proof can be found in~\cite[Appendix~3]{Boyd(1981speculations)}.   The corresponding
  formula for a non-trivial polynomial $p(z_1,z_2, \ldots, z_d)$ with complex coefficients
  in $d$-variables $z_1$, $z_2$, $\ldots$, $z_d$
  \begin{multline*} {\det}_{\caln(\IZ^d)}\bigl(r_{p(z_1, z_2, \ldots , z_d)}^{(2)} \colon
    L^2(\IZ^d) \to L^2(\IZ)^d\bigr)
    \\
    = \lim_{k_2 \to \infty} \; \lim_{k_3 \to \infty} \; \ldots \; \lim_{k_d \to \infty} \;
    {\det}_{\caln(\IZ)}\bigl(r_{p(z, z^{k_2}, \ldots , z_r^{k_d})}^{(2)} \colon L^2(\IZ)
    \to L^2(\IZ)\bigr)
  \end{multline*}
  is  proved in~\cite[Appendix~4]{Boyd(1981speculations)}  and~\cite[Theorem~2]{Lawton(1983)}.
  Note that for given $p$ and natural numbers $k_2$, $k_3$, \ldots, $k_d$,
  we can find natural   numbers $N_2$, $N_3(k_2)$, \ldots, $N_d(k_2,k_3, \ldots, k_{d-1})$
  such that we have $p(z, z^{k_2}, \ldots , z_r^{k_d}) \not= 0$,
  provided that $k_2 \ge N_2$, $k_3 \ge N_3(k_2)$, \ldots, $k_d \ge N_d(k_2, k_3, \ldots ,k_d)$ hold.
\end{remark}

Consider a natural number $d \ge 2$. For natural numbers $k_2, k_3, \ldots, k_d$ define a
group homomorphism
\[
q(k_2, k_3, \ldots k_d) \colon \IZ^d \to \IZ, \quad (a_1, a_2, \ldots, a_d) \mapsto
a_1 + \sum_{i = 2}^d k_i \cdot a_i.
\]
It induces a ring homomorphism $\widehat{q}(k_2, k_3, \ldots k_d) \colon \IC[\IZ^d] \to \IC[\IZ]$.  
Given a matrix $A$, let 
\[
A[k_2,\ldots, k_d] \in M_{r,s}(\IC[\IZ])
\] 
be the   matrix obtained from $A$ by applying the ring homomorphism $\widehat{q}(k_2, k_3, \ldots  k_d)$ 
   to each entry. If $p = p(z_1, z_2, \ldots, z_d)$ is polynomial and we regard it
  as element in $M_{1,1}(\IC[\IZ^d])$, then $p[k_2, \ldots, k_d]$ is the $(1,1)$-matrix
    over $\IC[\IZ]$ given by the polynomial $p(z,z^{k_2}, \ldots, z^{k_d})$. 
Note that for any element $p \in \IC[\IZ^d] = \IC[z^{\pm 1}, \ldots, z^{\pm 1}_d]$ 
we can find a monomial $z_1^{n_1} \cdot z_2^{n_2} \cdot \cdots \cdot z_d^{n_d}$
such that $p_0 := z_1^{n_1} \cdot z_2^{n_2} \cdot \cdots \cdot z_d^{n_d} \cdot p$ is a polynomial in variables
$z_1$, $\ldots$, $z_d$ and we have ${\det}_{\caln(\IZ^d)}(r_{p}^{(2)}) = {\det}_{\caln(\IZ^d)}(r_{p_0}^{(2)})$. Hence we
    conclude from the iterated limit appearing in
    Remark~\ref{rem:Approximating_Mahler_measures_for_polynomials_in_several_variables}
    that for a $(1,1)$-matrix $A$ over $\IC[\IZ^d]$ we have
    \begin{multline}
   {\det}_{\caln(\IZ^d)}\bigl(r_A^{(2)} \colon L^2(\IZ^d) \to L^2(\IZ^d)\bigr) 
    \\ =
    \lim_{k_2 \to \infty} \; \lim_{k_3 \to \infty} \; \ldots \; \lim_{k_d \to \infty} \;
    {\det}_{\caln(\IZ)}\bigl(r_{A[k_2, \ldots, k_d]}^{(2)} \colon L^2(\IZ) \to
    L^2(\IZ)\bigr).
    \label{approximation_rewritten_for_(1,1)-matrices}
  \end{multline}
We want to extend this to matrices of arbitrary finite size. For this purpose the following formula is useful, which reduces
the computation of the Fuglede-Kadison determinant ${\det}_{\caln(\IZ^d)}(r_A^{(2)})$ for $A \in M_{r,s}(\IC[\IZ^d])$ to 
the special case, where $r = s$ and $r_A^{(2)}$ is injective, and finally to the case $r = s = 1$.

\begin{lemma}\label{lem:reducting_to_weak_autos}
Consider a matrix $A \in M_{r,s}(\IC[\IZ^d])$. Then there exists an integer $q \ge 0$ and a matrix $B \in M_{q,r}(\IC[\IZ^d])$
such that the sequence $0 \to L^2(\IZ^d)^q \xrightarrow{r_B^{(2)}} L^2(\IZ^d)^r \xrightarrow{r_A^{(2)}} L^2(\IZ^d)^s$
is weakly exact, i.e., $r_B^{(2)}$ is injective and the closure of the image of $r_B^{(2)}$ is the kernel of $r_A^{(2)}$.
For any such choice we get for the matrices $D_1 = B^*B + AA^*$ in $M_{r,r}(\IC[\IZ^d])$ and $D_2 = BB^*$ in $M_{q,q}(\IC[\IZ^d])$:

\begin{enumerate}

\item\label{lem:reducting_to_weak_autos:weak}
The operators $r_{D_1}^{(2)} \colon L^2(\IZ^d)^r \to L^2(\IZ^d)^r$ and
$r_{D_2}^{(2)} \colon L^2(\IZ^d)^q \to L^2(\IZ^d)^q$ are injective;

\item\label{lem:reducting_to_weak_autos:formula}
We have
\[{\det}_{\caln(\IZ^d)}(r_A^{(2)}) =
\sqrt{\frac{{\det}_{\caln(\IZ^d)}(r_{D_1}^{(2)})}{{\det}_{\caln(\IZ^d)}(r_{D_2}^{(2)} )}};
\]

\item\label{lem:reducting_to_weak_autos:reducing_with_det_over_C[Zd]}
The elements ${\det}_{\IC[\IZ^d]}(D_1)$ and ${\det}_{\IC[\IZ^d]}(D_2)$ in $\IC[\IZ^d]$ are different 
from zero and we get
\begin{eqnarray*}
{\det}_{\caln(\IZ^d)}(r_{D_1}^{(2)}) 
& = & 
{\det}_{\caln(\IZ^d)}\bigl(r_{{\det}_{\IC[\IZ^d]}(D_1)}^{(2)} \colon L^2(\IZ^d) \to L^2(\IZ^d)\bigr);
\\
{\det}_{\caln(\IZ^d)}(r_{D_2}^{(2)}) 
& = & 
{\det}_{\caln(\IZ^d)}\bigl(r_{{\det}_{\IC[\IZ^d]}(D_2)}^{(2)} \colon L^2(\IZ^d) \to L^2(\IZ^d)\bigr).
\end{eqnarray*}
\end{enumerate}
\end{lemma}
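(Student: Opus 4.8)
The plan is to work throughout with the Fourier transform identification $\caln(\IZ^d) \cong L^\infty(\IT^d)$ and $L^2(\IZ^d) \cong L^2(\IT^d)$, under which a matrix $M$ over $\IC[\IZ^d]$ becomes a matrix-valued function $M(\theta)$ on the torus $\IT^d$ with trigonometric polynomial entries, and $r_M^{(2)}$ becomes pointwise multiplication by $M(\theta)$. Note that evaluation at $\theta$ is a ring homomorphism, so for square $M$ the classical determinant ${\det}_{\IC[\IZ^d]}(M) \in \IC[\IZ^d]$ is carried to $\det_{\IC}(M(\theta))$. To construct $B$, I would use that $\IC[\IZ^d]$ is a Noetherian integral domain, so the submodule $N := \ker\bigl(r_A \colon \IC[\IZ^d]^r \to \IC[\IZ^d]^s\bigr)$ is finitely generated and, over the field of fractions $K$, the space $\ker(r_A \otimes_{\IC[\IZ^d]} K)$ has dimension $q = r - \rk_K(A)$. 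Choose $b_1, \ldots, b_q \in N$ forming a $K$-basis of this kernel and let $B \in M_{q,r}(\IC[\IZ^d])$ have rows $b_1, \ldots, b_q$. For almost every $\theta$ one has $\rk(A(\theta)) = \rk_K(A)$, since the rank drops only on the proper subvariety where the maximal nonvanishing minor vanishes; hence $\ker(\cdot\, A(\theta))$ has dimension $q$, and as the $b_i(\theta)$ lie in it and are linearly independent almost everywhere (a suitable $q\times q$ minor of $B$ is a nonzero Laurent polynomial), they span it. Thus $\im(r_B(\theta)) = \ker(\cdot\, A(\theta))$ almost everywhere, which yields injectivity of $r_B^{(2)}$ and that the closure of its image equals $\ker(r_A^{(2)})$, i.e.\ weak exactness. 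All remaining assertions are then proved for \emph{any} such $B$, using only weak exactness and injectivity of $r_B^{(2)}$.

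For assertion~\eqref{lem:reducting_to_weak_autos:weak} I would use $r_{XY}^{(2)} = r_Y^{(2)}\circ r_X^{(2)}$ and $r_{X^*}^{(2)} = (r_X^{(2)})^*$ to write $r_{D_1}^{(2)} = r_B^{(2)}(r_B^{(2)})^* + (r_A^{(2)})^* r_A^{(2)}$ and $r_{D_2}^{(2)} = (r_B^{(2)})^* r_B^{(2)}$, both positive. A vector $x$ lies in $\ker r_{D_1}^{(2)}$ if and only if $(r_B^{(2)})^* x = 0$ and $r_A^{(2)} x = 0$, that is $x \in \ker(r_A^{(2)}) \cap (\overline{\im r_B^{(2)}})^{\perp}$; weak exactness turns this into $\ker(r_A^{(2)}) \cap \ker(r_A^{(2)})^{\perp} = 0$. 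Injectivity of $r_{D_2}^{(2)}$ is immediate from injectivity of $r_B^{(2)}$.

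The heart of assertion~\eqref{lem:reducting_to_weak_autos:formula} is the observation that, with respect to the orthogonal $\caln(\IZ^d)$-invariant decomposition $L^2(\IZ^d)^r = \ker(r_A^{(2)}) \oplus \ker(r_A^{(2)})^{\perp}$, the operator $r_{D_1}^{(2)}$ is block diagonal: its summand $r_B^{(2)}(r_B^{(2)})^*$ vanishes on $\ker(r_A^{(2)})^{\perp}$ and maps into $\overline{\im r_B^{(2)}} = \ker(r_A^{(2)})$, while $(r_A^{(2)})^* r_A^{(2)}$ vanishes on $\ker(r_A^{(2)})$ and maps into its complement. Using multiplicativity of the Fuglede--Kadison determinant under orthogonal direct sum decompositions together with ${\det}(f) = \sqrt{{\det}(ff^*)} = \sqrt{{\det}(f^*f)}$ from Theorem~\ref{the:main_properties_of_det}, the first block contributes ${\det}_{\caln(\IZ^d)}(r_B^{(2)})^2 = {\det}_{\caln(\IZ^d)}(r_{D_2}^{(2)})$ and the second contributes ${\det}_{\caln(\IZ^d)}(r_A^{(2)})^2$. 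Hence
\[
{\det}_{\caln(\IZ^d)}(r_{D_1}^{(2)}) = {\det}_{\caln(\IZ^d)}(r_{D_2}^{(2)}) \cdot {\det}_{\caln(\IZ^d)}(r_A^{(2)})^2,
\]
which is the asserted formula after taking square roots.

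For assertion~\eqref{lem:reducting_to_weak_autos:reducing_with_det_over_C[Zd]}, since $D_1,D_2$ are square with $r_{D_i}^{(2)}$ injective by~\eqref{lem:reducting_to_weak_autos:weak}, the functions $D_i(\theta)$ are invertible almost everywhere, so $\det_{\IC}(D_i(\theta)) = ({\det}_{\IC[\IZ^d]} D_i)(\theta) \neq 0$ almost everywhere and the Laurent polynomial ${\det}_{\IC[\IZ^d]}(D_i)$ is nonzero. Finally, the Fuglede--Kadison determinant of a square matrix over $\IC[\IZ^d]$ is ${\det}_{\caln(\IZ^d)}(r_{D_i}^{(2)}) = \exp\bigl(\int_{\IT^d} \ln|\det_{\IC} D_i(\theta)|\, d\mu\bigr)$ (the pointwise Fuglede--Kadison determinant over the trivial group being $|\det_{\IC}(\cdot)|$ by Example~\ref{exa:det_for_finite_groups}), and since $\det_{\IC} D_i(\theta) = ({\det}_{\IC[\IZ^d]} D_i)(\theta)$ this integral is exactly the Mahler measure of ${\det}_{\IC[\IZ^d]}(D_i)$, hence equals ${\det}_{\caln(\IZ^d)}\bigl(r^{(2)}_{{\det}_{\IC[\IZ^d]}(D_i)}\bigr)$ by the identification recalled in Section~\ref{sec:The_Mahler_measure_as_Fuglede-Kadison_determinant} and Theorem~\ref{lem:Mahler_measure_and_Fuglede-Kadison_determinant}. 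I expect the main obstacle to be the first paragraph: verifying weak exactness requires the almost-everywhere rank argument on $\IT^d$ (that a chosen $K$-basis of the kernel module spans the pointwise kernel off a measure-zero set), and one must also note that determinant class is never an issue here because the Mahler measure of a nonzero Laurent polynomial is finite. The block-decomposition argument in~\eqref{lem:reducting_to_weak_autos:formula} is the conceptual crux but, once the orthogonal supports are identified, reduces to the standard multiplicativity properties of the determinant.
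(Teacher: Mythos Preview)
Your proof is correct and largely self-contained, but it differs in strategy from the paper's argument, especially for assertion~\eqref{lem:reducting_to_weak_autos:formula}.

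For the construction of $B$ and weak exactness, the paper proceeds as you do---choosing a $K$-basis of the kernel over the field of fractions---but instead of your pointwise rank argument on $\IT^d$ it simply cites~\cite[Lemma~1.34~(1)]{Lueck(2002)} to pass from exactness over the localization to weak exactness at the $L^2$-level. Your Fourier argument is a direct proof of that lemma in the commutative case. For assertion~\eqref{lem:reducting_to_weak_autos:weak}, the paper views the sequence as a two-term chain complex of Hilbert $\caln(\IZ^d)$-modules, observes that $r_{D_1}^{(2)}$ and $r_{D_2}^{(2)}$ are its Laplacians in degrees $1$ and $2$, and invokes~\cite[Lemma~3.39]{Lueck(2002)} (vanishing $L^2$-homology $\Leftrightarrow$ Laplacian is a weak isomorphism). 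Your direct computation of the kernels is equivalent but avoids the chain-complex language.

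The real divergence is in assertion~\eqref{lem:reducting_to_weak_autos:formula}. The paper derives the formula from the $L^2$-torsion identity~\cite[Lemma~3.30]{Lueck(2002)}, which expresses the alternating sum of $\ln\det$ of the differentials of an acyclic Hilbert chain complex in terms of the Laplacians; a short calculation then rearranges to the stated quotient. Your block-diagonal argument---observing that $r_{D_1}^{(2)}$ decomposes as $r_B^{(2)}(r_B^{(2)})^*\oplus (r_A^{(2)})^*r_A^{(2)}$ with respect to $\ker(r_A^{(2)})\oplus\ker(r_A^{(2)})^{\perp}$---is more elementary and bypasses the torsion machinery entirely, using only Theorem~\ref{the:main_properties_of_det}~\eqref{the:main_properties_of_det:additivity} and~\eqref{the:main_properties_of_det:det(f)_is_det(fast)}. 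This is a genuine simplification: it makes the lemma independent of~\cite[Lemma~3.30]{Lueck(2002)}, at the cost of being specific to this short complex rather than a special case of a general identity. For assertion~\eqref{lem:reducting_to_weak_autos:reducing_with_det_over_C[Zd]} the paper just cites Lemma~\ref{lem:det(C[G]} and Remark~\ref{rem:det_abelian_case}; your Mahler-measure computation via Example~\ref{exa:Fuglede-Kadison_determinant_for_G_is_Zd} amounts to the same thing.
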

\begin{proof}~\eqref{lem:reducting_to_weak_autos:weak} Let $\IC[\IZ^d]_{(0)}$ be the
  quotient field of $\IC[\IZ^d]$. Let $q$ be the $\IC[\IZ^d]_{(0)}$-dimension of the
  kernel of $r_A \colon \IC[\IZ^d]_{(0)}^r \to \IC[\IZ^d]_{(0)}^s$. We can choose a matrix
  $B \in M_{q,r}(\IC[\IZ^d]_{(0)})$ such that the sequence of $\IC[\IZ^d]_{(0)}$-modules
  $0 \to \IC[\IZ^d]_{(0)}^q \xrightarrow{r_B} \IC[\IZ^d]_{(0)}^r \xrightarrow{r_A}
  \IC[\IZ^d]_{(0)}^s$ is exact.  By possibly multiplying each entry of $B$ with the same
  non-trivial element in $\IC[\IZ^d]$, we can additionally arrange that $B$ lies
  $M_{q,r}(\IC[\IZ^d])$. We conclude from~\cite[Lemma~1.34~(1) on page~35]{Lueck(2002)}
  that the following sequence is weakly exact
\[
0 \to L^2(\IZ^d)^q \xrightarrow{r_B^{(2)}} L^2(\IZ^d)^r \xrightarrow{r_A^{(2)}} L^2(\IZ^d)^s.
\]
We consider it as a chain complex of Hilbert $\caln(\IZ^d)$-modules concentrated in
dimensions $2$, $1$ and $0$.  Then its first and second $L^2$-homology vanishes.  The
Laplace operators of it in dimensions $1$ and $2$ is given by $r_{D_1}^{(2)}$ and
$r_{D_2}^{(2)}$.  We conclude from~\cite[Lemma~3.39 on page~145]{Lueck(2002)} applied in the case where
we replace $L^2(\IZ^d)^s$ by the closure of the image of $r_A^{(2)}$ that
$r_{D_1}^{(2)}$ and $r_{D_2}^{(2)}$ are weak isomorphisms and in particular injective.
\\[2mm]~\eqref{lem:reducting_to_weak_autos:formula} We conclude from
Theorem~\ref{the:main_properties_of_det}~\eqref{the:main_properties_of_det:det(f)_is_det(fast)}
and from~\cite[Lemma~3.30 on page~140]{Lueck(2002)} applied to the chain complex of
Hilbert modules above
\begin{eqnarray*}
\lefteqn{\ln\bigl({\det}_{\caln(\IZ^d)}(r_A^{(2)})\bigr)}
& & 
\\
& = &
\ln\bigl({\det}_{\caln(\IZ^d)}(r_B^{(2)})\bigr)  - \frac{1}{2} \cdot \left(2 \cdot \ln\bigl({\det}_{\caln(\IZ^d)}(r_{D_2}^{(2)})\bigr) 
-  \ln\bigl({\det}_{\caln(\IZ^d)}(r_{D_1}^{(2)})\bigr)\right)
\\
& = &
\frac{1}{2} \cdot  \ln\bigl({\det}_{\caln(\IZ^d)}(r_{D_2}^{(2)})\bigr)  - \ln\bigl({\det}_{\caln(\IZ^d)}(r_{D_2}^{(2)})\bigr) 
+ \frac{1}{2} \cdot \ln\bigl({\det}_{\caln(\IZ^d)}(r_{D_1}^{(2)})\bigr)
\\
& = &
- \frac{1}{2} \cdot  \ln\bigl({\det}_{\caln(\IZ^d)}(r_{D_2}^{(2)})\bigr)  
+ \frac{1}{2} \cdot \ln\bigl({\det}_{\caln(\IZ^d)}(r_{D_1}^{(2)})\bigr)
\\
& = &
\ln\left(\sqrt{\frac{{\det}_{\caln(\IZ^d)}(r_{D_1}^{(2)})}{{\det}_{\caln(\IZ^d)}(r_{D_2}^{(2)} )}}\right).
\end{eqnarray*}
\eqref{lem:reducting_to_weak_autos:reducing_with_det_over_C[Zd]}
This follows from assertion~\eqref{lem:reducting_to_weak_autos:weak}
and Lemma~\ref{lem:det(C[G]} and
Remark~\ref{rem:det_abelian_case}.
This finishes the proof of Lemma~\ref{lem:reducting_to_weak_autos}.
\end{proof}

\begin{theorem}[Approximating Mahler measures for matrices over {$\IC[\IZ^d]$} by matrices over {$\IC[\IZ]$}]
\label{the:Approximating_Mahler_measures_for_matrices_over_IC[IZd]_by_matrices_over_IC[IZ]}
Consider a matrix $A \in M_{r,s}(\IC[\IZ^d])$. Then we get
\begin{multline*}
   {\det}_{\caln(\IZ^d)}\bigl(r_A^{(2)} \colon L^2(\IZ^d)^r \to L^2(\IZ^d)^s\bigr) 
    \\ =
    \lim_{k_2 \to \infty} \; \lim_{k_3 \to \infty} \; \ldots \; \lim_{k_d \to \infty} \;
    {\det}_{\caln(\IZ)}\bigl(r_{A[k_2, \ldots, k_d]}^{(2)} \colon L^2(\IZ)^r \to
    L^2(\IZ)^s\bigr).
  \end{multline*}
\end{theorem}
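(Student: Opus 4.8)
The plan is to reduce the claim for an arbitrary matrix to the already-established $(1,1)$-case~\eqref{approximation_rewritten_for_(1,1)-matrices} by means of Lemma~\ref{lem:reducting_to_weak_autos}. Write $k = (k_2, \ldots, k_d)$ and $A[k] := A[k_2, \ldots, k_d]$. First I would apply Lemma~\ref{lem:reducting_to_weak_autos} to $A \in M_{r,s}(\IC[\IZ^d])$ to obtain an integer $q \ge 0$ and a matrix $B \in M_{q,r}(\IC[\IZ^d])$ making $0 \to L^2(\IZ^d)^q \xrightarrow{r_B^{(2)}} L^2(\IZ^d)^r \xrightarrow{r_A^{(2)}} L^2(\IZ^d)^s$ weakly exact, together with $D_1 = B^*B + AA^*$ and $D_2 = BB^*$. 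Setting $f_i := {\det}_{\IC[\IZ^d]}(D_i) \in \IC[\IZ^d] \setminus \{0\}$, parts~\eqref{lem:reducting_to_weak_autos:formula} and~\eqref{lem:reducting_to_weak_autos:reducing_with_det_over_C[Zd]} give
\[
{\det}_{\caln(\IZ^d)}(r_A^{(2)}) = \sqrt{\frac{{\det}_{\caln(\IZ^d)}(r_{f_1}^{(2)})}{{\det}_{\caln(\IZ^d)}(r_{f_2}^{(2)})}}.
\]
Since $f_1$ and $f_2$ are $(1,1)$-matrices, I would then invoke~\eqref{approximation_rewritten_for_(1,1)-matrices} to write each ${\det}_{\caln(\IZ^d)}(r_{f_i}^{(2)})$ as the iterated limit $\lim_{k_2}\cdots\lim_{k_d}{\det}_{\caln(\IZ)}(r_{f_i[k]}^{(2)})$.

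The heart of the argument is to recognise, for each sufficiently large tuple $k$, the quantity ${\det}_{\caln(\IZ)}(r_{A[k]}^{(2)})$ as the corresponding square-root ratio over $\IZ$. Because $\widehat{q}(k_2, \ldots, k_d) \colon \IC[\IZ^d] \to \IC[\IZ]$ is a $\IC$-algebra homomorphism compatible with the involution $*$ (as $q(k_2,\ldots,k_d)$ is a group homomorphism, so $g^{-1} \mapsto (\widehat{q}(g))^{-1}$), it commutes with matrix addition and multiplication, with the passage to adjoints, and, on square matrices, with the determinant over the commutative coefficient rings. Hence $D_1[k] = B[k]^*B[k] + A[k]A[k]^*$ and $D_2[k] = B[k]B[k]^*$, and $f_i[k] = {\det}_{\IC[\IZ]}(D_i[k])$. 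So it suffices to check that $B[k]$ is a legitimate choice of the auxiliary matrix in Lemma~\ref{lem:reducting_to_weak_autos} applied to $A[k]$ over $\IZ$; then part~\eqref{lem:reducting_to_weak_autos:formula} over $\IZ$ yields ${\det}_{\caln(\IZ)}(r_{A[k]}^{(2)}) = \sqrt{{\det}_{\caln(\IZ)}(r_{f_1[k]}^{(2)})/{\det}_{\caln(\IZ)}(r_{f_2[k]}^{(2)})}$, and combining this for large $k$ with the two iterated limits above (the denominator limit being the positive number ${\det}_{\caln(\IZ^d)}(r_{f_2}^{(2)})$, so that the iterated limit of the ratio is the ratio of the iterated limits) finishes the proof.

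The main obstacle is precisely the weak exactness of $0 \to L^2(\IZ)^q \xrightarrow{r_{B[k]}^{(2)}} L^2(\IZ)^r \xrightarrow{r_{A[k]}^{(2)}} L^2(\IZ)^s$ for large $k$, which by~\cite[Lemma~1.34~(1) on page~35]{Lueck(2002)} reduces to algebraic exactness of $0 \to \IC[\IZ]_{(0)}^q \xrightarrow{r_{B[k]}} \IC[\IZ]_{(0)}^r \xrightarrow{r_{A[k]}} \IC[\IZ]_{(0)}^s$ over the quotient field $\IC[\IZ]_{(0)}$. The identity $BA = 0$ specialises to $B[k]A[k] = 0$ for every $k$, so $\im(r_{B[k]}) \subseteq \ker(r_{A[k]})$; what must be controlled are the two ranks. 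For this I would use that every minor of $A[k]$, resp.\ of $B[k]$, is the image under $\widehat{q}(k_2, \ldots, k_d)$ of the corresponding minor of $A$, resp.\ of $B$. Minors vanishing identically over $\IC[\IZ^d]$ stay zero, so $\rk(A[k]) \le \rk(A) = r-q$ for all $k$, while $\rk(B[k]) \le q$ holds automatically since $B[k]$ has only $q$ rows. On the other hand a nonzero $(r-q) \times (r-q)$ minor of $A$ and a nonzero $q \times q$ minor of $B$ remain nonzero under the substitution $z_1 = z$, $z_i = z^{k_i}$ for $k$ large, by the same non-vanishing phenomenon underlying Remark~\ref{rem:Approximating_Mahler_measures_for_polynomials_in_several_variables}. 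As only finitely many minors are involved, a single choice of iterated thresholds (compatible with the order of the nested limits) makes $\rk(A[k]) = r-q$ and $\rk(B[k]) = q$ simultaneously; thus $r_{B[k]}$ is injective and, by the dimension count $q = r - (r-q)$, one gets $\im(r_{B[k]}) = \ker(r_{A[k]})$. This establishes the required weak exactness for large $k$ and completes the reduction.
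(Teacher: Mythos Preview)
Your argument is correct and follows the same overall strategy as the paper: apply Lemma~\ref{lem:reducting_to_weak_autos} to $A$ over $\IZ^d$, invoke the $(1,1)$-case~\eqref{approximation_rewritten_for_(1,1)-matrices} for $f_1,f_2$, and then re-apply Lemma~\ref{lem:reducting_to_weak_autos} to $A[k]$ with the specialised $B[k]$. The only noteworthy difference is in how weak exactness of the specialised sequence is established: the paper shows that ${\det}_{\IC[\IZ]}(D_i[k]) = f_i[k] \neq 0$ for large $k$ and then argues via the Laplacians and~\cite[Lemma~3.39]{Lueck(2002)}, whereas you control the ranks of $A[k]$ and $B[k]$ directly through non-vanishing of suitable minors and pass through algebraic exactness over $\IC[\IZ]_{(0)}$ via~\cite[Lemma~1.34~(1)]{Lueck(2002)}; these are two packagings of the same non-vanishing phenomenon for the substitution $z_i \mapsto z^{k_i}$.
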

\begin{proof}
  Because of Lemma~\ref{lem:reducting_to_weak_autos}, we can choose for $A$ an integer $q \ge 0$ and a matrix 
  $B \in M_{q,r}(\IC[\IZ^d])$ such that  the
  sequence $0 \to L^2(\IZ)^q \xrightarrow{r_B^{(2)}} L^2(\IZ^d)^r \xrightarrow{r_A^{(2)}}
  L^2(\IZ^d)^s$ is weakly exact. Put $D_1 = B^*B + AA^*$ in $M_{r,r}(\IC[\IZ^d])$ and 
 $D_2  = BB^*$ in $M_{q,q}(\IC[\IZ^d])$ as in in Lemma~\ref{lem:reducting_to_weak_autos}. We
  conclude from Lemma~\ref{lem:reducting_to_weak_autos} that ${\det}_{\IC[\IZ^d]}(D_1)$
  and ${\det}_{\IC[\IZ^d]}(D_2)$ in $\IC[\IZ^d]$ are different from zero and we get
\begin{eqnarray}
{\det}_{\caln(\IZ^d)}(r_{D_1}^{(2)}) 
& = & 
{\det}_{\caln(\IZ^d)}\bigl(r_{{\det}_{\IC[\IZ^d]}(D_1)}^{(2)} \colon L^2(\IZ^d) \to L^2(\IZ^d)\bigr);
\label{the:Approximating_Mahler_measures_for_matrices_over_IC[IZd]_by_matrices_over_IC[IZ]:(1)}
\\
{\det}_{\caln(\IZ^d)}(r_{D_2}^{(2)}) 
& = & 
{\det}_{\caln(\IZ^d)}\bigl(r_{{\det}_{\IC[\IZ^d]}(D_2)}^{(2)} \colon L^2(\IZ^d) \to L^2(\IZ^d)\bigr);
\label{the:Approximating_Mahler_measures_for_matrices_over_IC[IZd]_by_matrices_over_IC[IZ]:(2)}
\\
{\det}_{\caln(\IZ^d)}(r_{A}^{(2)}) 
& = & 
\sqrt{\frac{{\det}_{\caln(\IZ^d)}\bigl(r_{{\det}_{\IC[\IZ^d]}(D_1)}^{(2)} \colon L^2(\IZ^d) \to L^2(\IZ^d)\bigr)}
{{\det}_{\caln(\IZ)}\bigl(r_{{\det}_{\IC[\IZ^d]}(D_2)}^{(2)} \colon L^2(\IZ^d) \to L^2(\IZ^d)\bigr)}}.
\label{the:Approximating_Mahler_measures_for_matrices_over_IC[IZd]_by_matrices_over_IC[IZ]:(3)}
\end{eqnarray}

Fix $i = 1, \ldots, d-1$. Let $b_i$ be the maximum over the norms of those integers $n_i$ for which 
there exists a monomial of the form $z_1^{n_1}z_2^{n_2} \cdots z_d^{n_d}$ 
such that its coefficients in the description of ${\det}_{\IC[\IZ^d]}(D_1)$ and ${\det}_{\IC[\IZ^d]}(D_2)$
as a sum of such monomials is non-trivial. Define $c_i = 2 \cdot \sum_{j = 1}^ib_j$ for $i = 1,2 \ldots, (d-1)$. 

Consider any sequence of natural numbers $k_1, k_2, \ldots, k_d$ satisfying 
$k_2 >  c_1$, $k_3 >  c_2 \cdot k_2$, $k_4 > c_3 \cdot k_3$, $\ldots$,  $k_d > c_{d-1} \cdot k_{d-1}$.  
Let $l = 1,2$. Then we get for any two $d$-tuples $(m_1, m_2, \ldots m_d)$ and
$(n_1, n_2, \ldots , n_d)$ for which there exists a monomial of the form 
$z_1^{m_1}z_2^{m_2} \cdots z_d^{m_d}$  and $z_1^{n_1}z_2^{n_2} \cdots z_d^{n_d}$ 
such that its coefficients in the description of ${\det}_{\IC[\IZ^d]}(D_l)$
as a sum of such monomials is non-trivial
\begin{multline*}
q(k_1, k_3, \ldots , k_d)\bigl((m_1, m_2, \ldots m_d)\big) 
= 
q(k_1, k_3, \ldots , k_d)\bigl((n_1, n_2, \ldots n_d)\big) 
\\
\implies (m_1, m_2, \ldots m_d) = (n_1, n_2, \ldots n_d).
\end{multline*}
This implies that  $\bigl({\det}_{\IC[\IZ^d]}(D_l)\bigr)[k_2, \ldots, k_d]$ is  not zero.  One easily checks
\begin{eqnarray*}
{\det}_{\IC[\IZ^d]}(D_1)[k_2, \ldots, k_d]
& = & 
{\det}_{\IC[\IZ^d]}\bigl(D_1[k_2, \ldots, k_d]\bigr);
\\
{\det}_{\IC[\IZ^d]}(D_2)[k_2, \ldots, k_d]
& = & 
{\det}_{\IC[\IZ^d]}\bigl(D_2[k_2, \ldots, k_d]\bigr);
\\
D_1[k_2, \ldots, k_d] 
& = & 
B[k_2, \ldots, k_d]^* B[k_2, \ldots, k_d] + A[k_2, \ldots, k_d]A[k_2, \ldots, k_d]^*;
\\
D_2[k_2, \ldots, k_d] 
& = & 
B[k_2, \ldots, k_d]B[k_2, \ldots, k_d]^*.
\end{eqnarray*}
We conclude that ${\det}_{\IC[\IZ]}\bigl(D_1[k_2, \ldots, k_d]\bigr)$ and ${\det}_{\IC[\IZ]}\bigl(D_2[k_2, \ldots, k_d]\bigr)$
are non-trivial. Lemma~\ref{lem:det(C[G]}~\eqref{lem:det(C[G]:equivalent} implies 
that $r_{D_1[k_2, \ldots, k_d]}^{(2)} \colon L^2(\IZ)^r \to L^2(\IZ)^r$ and $r_{D_2[k_2, \ldots, k_d]}^{(2)} \colon L^2(\IZ)^q \to L^2(\IZ)^q$
are weak isomorphism. This implies that the sequence
$0 \to L^2(\IZ)^q \xrightarrow{r_{B[k_2, \ldots, k_d]}^{(2)}} L^2(\IZ)^r \xrightarrow{r_{A[k_2, \ldots, k_d]}^{(2)}} L^2(\IZ)^s$
is weakly exact since we can view it as a chain complex of Hilbert $\caln(\IZ)$-modules,
its Laplace operator in dimension $1$ and $2$ is given by
the weak isomorphisms $r_{D_1[k_2, \ldots, k_d]}^{(2)}$ and $r_{D_2[k_2, \ldots, k_d]}^{(2)}$ and we can 
apply~\cite[Lemma~3.39 on page~145]{Lueck(2002)} applied in the case where
we replace $L^2(\IZ)^s$ by the closure of the image of $r_{A[k_2, \ldots, k_d]}^{(2)}$. We conclude from
Lemma~\ref{lem:reducting_to_weak_autos} applied to $A[k_2, \ldots, k_d]$ and $B[k_2, \ldots, k_d]$
\begin{multline*}
{\det}_{\caln(\IZ)}\bigl(r_{A[k_2, \ldots, k_d]}^{(2)}\colon L^2(\IZ)^r \to L^2(\IZ)^s\bigr) 
\\ = 
\sqrt{\frac{{\det}_{\caln(\IZ)}\bigl(r_{{\det}_{\IC[\IZ]}(D_1[k_2, \ldots, k_d])}^{(2)} \colon L^2(\IZ) \to L^2(\IZ)\bigr)}
{{\det}_{\caln(\IZ)}\bigl(r_{{\det}_{\IC[\IZ]}(D_2[k_2, \ldots, k_d])}^{(2)} \colon L^2(\IZ) \to L^2(\IZ)\bigr)}},
\end{multline*}
provided that $k_2 >  c_1$, $k_3 >  c_2 \cdot k_2$, $k_4 > c_3 \cdot k_3$, $\ldots$,  $k_d > c_{d-1} \cdot k_{d-1}$.  
This implies
\begin{multline}
\lim_{k_2 \to \infty} \; \lim_{k_3 \to \infty} \; \ldots \; \lim_{k_d \to \infty} \; 
{\det}_{\caln(\IZ)}\bigl(r_{A[k_2, \ldots, k_d]}^{(2)}\colon L^2(\IZ)^r \to L^2(\IZ)^s\bigr) 
\\
= 
\lim_{k_2 \to \infty} \; \lim_{k_3 \to \infty} \; \ldots \; \lim_{k_d \to \infty} \; 
\sqrt{\frac{{\det}_{\caln(\IZ)}\bigl(r_{{\det}_{\IC[\IZ]}(D_1[k_2, \ldots, k_d])}^{(2)} \colon L^2(\IZ) \to L^2(\IZ)\bigr)}
{{\det}_{\caln(\IZ)}\bigl(r_{{\det}_{\IC[\IZ]}(D_2[k_2, \ldots, k_d])}^{(2)} \colon L^2(\IZ) \to L^2(\IZ)\bigr)}}.
\label{the:Approximating_Mahler_measures_for_matrices_over_IC[IZd]_by_matrices_over_IC[IZ]:(4)}
\end{multline}

We get from~\eqref{approximation_rewritten_for_(1,1)-matrices},~%
\eqref{the:Approximating_Mahler_measures_for_matrices_over_IC[IZd]_by_matrices_over_IC[IZ]:(1)}
and~\eqref{the:Approximating_Mahler_measures_for_matrices_over_IC[IZd]_by_matrices_over_IC[IZ]:(2)}
\begin{multline*}
   {\det}_{\caln(\IZ^d)}\bigl(r_{{\det}_{\IC[\IZ^d]}(D_1)}^{(2)} \colon L^2(\IZ^d) \to L^2(\IZ^d)\bigr)
    \\ =
    \lim_{k_2 \to \infty} \; \lim_{k_3 \to \infty} \; \ldots \; \lim_{k_d \to \infty} \;
    {\det}_{\caln(\IZ)}\bigl(r_{{\det}_{\IC[\IZ]}(D_1[k_2, \ldots, k_d])}^{(2)} \colon L^2(\IZ) \to L^2(\IZ)\bigr),
      \end{multline*}
and
\begin{multline*}
   {\det}_{\caln(\IZ^d)}\bigl(r_{{\det}_{\IC[\IZ^d]}(D_2)}^{(2)} \colon L^2(\IZ^d) \to L^2(\IZ^d)\bigr)
    \\ =
    \lim_{k_2 \to \infty} \; \lim_{k_3 \to \infty} \; \ldots \; \lim_{k_d \to \infty} \;
    {\det}_{\caln(\IZ)}\bigl(r_{{\det}_{\IC[\IZ]}(D_2[k_2, \ldots, k_d])}^{(2)} \colon L^2(\IZ) \to L^2(\IZ)\bigr).
      \end{multline*}
This implies
\begin{multline}
   \sqrt{\frac{{\det}_{\caln(\IZ^d)}\bigl(r_{{\det}_{\IC[\IZ]}(D_1)}^{(2)} \colon L^2(\IZ^d) \to L^2(\IZ^d)\bigr)}
{{\det}_{\caln(\IZ^d)}\bigl(r_{{\det}_{\IC[\IZ]}(D_2)}^{(2)} \colon L^2(\IZ^d) \to L^2(\IZ^d)\bigr)}}
    \\ =
    \lim_{k_2 \to \infty} \; \lim_{k_3 \to \infty} \; \ldots \; \lim_{k_d \to \infty} \;
    \sqrt{\frac{{\det}_{\caln(\IZ)}\bigl(r_{{\det}_{\IC[\IZ]}(D_1[k_2, \ldots, k_d])}^{(2)} \colon L^2(\IZ) \to L^2(\IZ)\bigr)}
{{\det}_{\caln(\IZ)}\bigl(r_{{\det}_{\IC[\IZ]}(D_2[k_2, \ldots, k_d])}^{(2)} \colon L^2(\IZ) \to L^2(\IZ)\bigr)}}.
    \label{the:Approximating_Mahler_measures_for_matrices_over_IC[IZd]_by_matrices_over_IC[IZ]:(5)}
  \end{multline}
Putting~\eqref{the:Approximating_Mahler_measures_for_matrices_over_IC[IZd]_by_matrices_over_IC[IZ]:(3)},
\eqref{the:Approximating_Mahler_measures_for_matrices_over_IC[IZd]_by_matrices_over_IC[IZ]:(4)}
and~\eqref{the:Approximating_Mahler_measures_for_matrices_over_IC[IZd]_by_matrices_over_IC[IZ]:(5)}
together yields the desired equality
\begin{multline*}
   {\det}_{\caln(\IZ^d)}\bigl(r_A^{(2)} \colon L^2(\IZ^d)^r \to L^2(\IZ^d)^s\bigr) 
    \\ =
    \lim_{k_2 \to \infty} \; \lim_{k_3 \to \infty} \; \ldots \; \lim_{k_d \to \infty} \;
    {\det}_{\caln(\IZ)}\bigl(r_{A[k_2, \ldots, k_d]}^{(2)} \colon L^2(\IZ)^r \to
    L^2(\IZ)^s\bigr).
  \end{multline*}
This finishes the proof of Theorem~\ref{the:Approximating_Mahler_measures_for_matrices_over_IC[IZd]_by_matrices_over_IC[IZ]}.
\end{proof}

%%%%%%%%%%%%%%%%%%%%%%%%%%%%%%%%%%%%%%%%%%%%%%%%%%%%%%%%%%%%%%%%%%%%%%%%%%%%%%%%%
%%%%%%%%%%%%%%%%%%%%%%%%%%%% Section 9: Lehmer's problem for groups with torsion %%%%%%%%%%%%%%%%
%%%%%%%%%%%%%%%%%%%%%%%%%%%%%%%%%%%%%%%%%%%%%%%%%%%%%%%%%%%%%%%%%%%%%%%%%%%%%%%%%

 \typeout{-------------   Section 9:  Lehmer's problem for groups with torsion --------------------}

\section{Lehmer's problem for groups with torsion}
\label{sec:Lehmer's_Question_for_groups_with_torsion}

\begin{remark}[Bound on the order of finite subgroups]\label{rem:bound_on_the_order_of_finite_subgroups}
  Let $G$ be a group with  $\Lambda^w_1 (G) > 1$. Let $H \subseteq G$ be
  any finite subgroup.  We conclude from
  Lemma~\ref{lem:elementary_properties_of_Lambda(G)}~\eqref{lem:elementary_properties_of_Lambda(G):subgroups}
  and~\eqref{lem:elementary_properties_of_Lambda(G):G_finite} that $\Lambda^w_1(G) \le
  \Lambda^w_1(H) \le (|H|-1)^{|H|^{-1}}$ holds if $|H| \ge 3$. Since we have 
   $\lim_{m \to \infty}  (m-1)^{m^{-1}} = 1$,  there is a constant $C > 1$ depending only on
  $\Lambda^w_1(G)$ such that $|H| \le C$ holds.
\end{remark}

\begin{example}[Lamplighter group]\label{exa:lamplighther_group}
  Let $L = \IZ/2 \wr \IZ$ be the lamplighter group. It is finitely generated and
  contains finite subgroups of arbitrary large order.  Hence we get $\Lambda(L) =
  \Lambda^w(L) = \Lambda_1(L) = \Lambda^w_1(L) = 1$ from Remark~\ref{rem:bound_on_the_order_of_finite_subgroups}.
  The lamplighter group is a subgroup of a finitely presented group, 
see for instance~\cite[Remark~10.24 on page~380]{Lueck(2002)}.
 Hence there exists a finitely presented group  $G$ with 
$\Lambda(G) =   \Lambda^w(G) = \Lambda_1(G) = \Lambda^w_1(G) = 1$ by 
Lemma~\ref{lem:elementary_properties_of_Lambda(G)}~%
\eqref{lem:elementary_properties_of_Lambda(G):subgroups}.
\end{example}

The considerations above explain why we will concentrate on torsionfree groups in the sequel.

%%%%%%%%%%%%%%%%%%%%%%%%%%%%%%%%%%%%%%%%%%%%%%%%%%%%%%%%%%%%%%%%%%%%%%%%%%%%%%%%%
%%%%%%%%%%%%%%%%%%%%% Section 10: Finitely generated free abelian groups  %%%%%%%%%%%%%%%%%%%%%%%%%
%%%%%%%%%%%%%%%%%%%%%%%%%%%%%%%%%%%%%%%%%%%%%%%%%%%%%%%%%%%%%%%%%%%%%%%%%%%%%%%%%

 \typeout{-----------------------   Section 10: Finitely generated free abelian groups --------------------}

\section{Finitely generated free abelian groups}
\label{sec:Finitely_generated_free_abelian_groups}

\begin{theorem}[Finitely generated free abelian groups]
\label{the:Finitely_generated_free_abelian_groups}
Let $d \ge 1$ be an integer. Then we have:
\[
\Lambda_1(\IZ^d)   =  \Lambda_1^w (\IZ^d) = \Lambda^w(\IZ^d)   
= \Lambda_1(\IZ) =\Lambda_1^w(\IZ)  =   \Lambda^w(\IZ),
\]
and
\[
\Lambda(\IZ^d) = \Lambda(\IZ).
\]

\end{theorem}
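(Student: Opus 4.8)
The plan is to establish the chain of equalities by combining the structural results already assembled in the excerpt with the approximation theorem over $\IZ^d$. I would organize the argument around proving two things: first, that all four ``non-weak-restricted'' constants $\Lambda_1(\IZ^d)$, $\Lambda_1^w(\IZ^d)$, $\Lambda^w(\IZ^d)$ coincide and are independent of $d$; and second, the separate identity $\Lambda(\IZ^d) = \Lambda(\IZ)$. For the first block, the equality $\Lambda^w(\IZ^d) = \Lambda_1^w(\IZ^d)$ is already recorded as Lemma~\ref{lem:lambda_upper_w(Z_upper_d)_is_lambda_upper_w_1(Z_upper_d)}, so the real work is relating the $\IZ^d$ constants to the $\IZ$ constants.

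The key tool is Theorem~\ref{the:Approximating_Mahler_measures_for_matrices_over_IC[IZd]_by_matrices_over_IC[IZ]}, which expresses ${\det}_{\caln(\IZ^d)}(r_A^{(2)})$ as an iterated limit of determinants ${\det}_{\caln(\IZ)}(r_{A[k_2,\ldots,k_d]}^{(2)})$ for the specialized matrices over $\IC[\IZ]$. For the inequality $\Lambda_1(\IZ^d) \ge \Lambda_1(\IZ)$, I would take a $(1,1)$-matrix $A = p \in \IZ[\IZ^d]$ with ${\det}_{\caln(\IZ^d)}(r_p^{(2)}) > 1$ and observe that the specializations $p[k_2,\ldots,k_d]$ are elements of $\IZ[\IZ]$ whose determinants approximate that of $p$; since each approximant either equals $1$ or is $\ge \Lambda_1(\IZ)$, and the limit exceeds $1$, the limit must be $\ge \Lambda_1(\IZ)$, giving ${\det}_{\caln(\IZ^d)}(r_p^{(2)}) \ge \Lambda_1(\IZ)$. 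The reverse inequality $\Lambda_1(\IZ^d) \le \Lambda_1(\IZ)$ follows from Lemma~\ref{lem:elementary_properties_of_Lambda(G)}\eqref{lem:elementary_properties_of_Lambda(G):subgroups} since $\IZ \subseteq \IZ^d$. For the weak constants one argues similarly, using that the specialization preserves injectivity of $r^{(2)}$ in the relevant range of $k_i$ (precisely the content verified inside the proof of Theorem~\ref{the:Approximating_Mahler_measures_for_matrices_over_IC[IZd]_by_matrices_over_IC[IZ]}, via Lemma~\ref{lem:det(C[G]}), so a weak isomorphism over $\IZ^d$ specializes to a weak isomorphism over $\IZ$ for large $k_i$.

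The crucial subtlety, and what I expect to be the main obstacle, is that the approximation theorem handles matrices of arbitrary size $(r,s)$ but produces specializations that are again $(r,s)$-matrices over $\IC[\IZ]$, whereas $\Lambda_1$ is defined only via $(1,1)$-matrices. The bridge is exactly Lemma~\ref{lem:reducting_to_weak_autos}, which reduces the determinant of an arbitrary matrix over $\IC[\IZ^d]$ to the $(1,1)$-case through ${\det}_{\IC[\IZ^d]}(D_1)$ and ${\det}_{\IC[\IZ^d]}(D_2)$; this is what lets general weak-isomorphism determinants over $\IZ^d$ be compared against the single-variable constant. The final statement $\Lambda(\IZ^d) = \Lambda(\IZ)$ is proved by the same approximation strategy but now without any injectivity hypothesis: an arbitrary $(r,s)$-matrix $A$ over $\IZ[\IZ^d]$ with ${\det}_{\caln(\IZ^d)}(r_A^{(2)}) > 1$ is approximated by $A[k_2,\ldots,k_d]$ over $\IZ[\IZ]$, each of whose determinants is either $1$ or $\ge \Lambda(\IZ)$, forcing the limit to be $\ge \Lambda(\IZ)$; together with $\Lambda(\IZ^d) \le \Lambda(\IZ)$ from the subgroup inequality this yields equality. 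The one point requiring care here is ensuring that the approximating determinants truly land in the admissible set defining $\Lambda(\IZ)$ — that they arise from honest integer matrices and that those with determinant $>1$ are bounded below by $\Lambda(\IZ)$ while those equal to $1$ do not spoil the $\limsup$ bound.
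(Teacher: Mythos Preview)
Your proposal uses the same ingredients as the paper --- Lemma~\ref{lem:lambda_upper_w(Z_upper_d)_is_lambda_upper_w_1(Z_upper_d)}, the subgroup inequality of Lemma~\ref{lem:elementary_properties_of_Lambda(G)}\eqref{lem:elementary_properties_of_Lambda(G):subgroups}, and the approximation results of Remark~\ref{rem:Approximating_Mahler_measures_for_polynomials_in_several_variables} and Theorem~\ref{the:Approximating_Mahler_measures_for_matrices_over_IC[IZd]_by_matrices_over_IC[IZ]} --- and the arguments you sketch for $\Lambda_1(\IZ^d)=\Lambda_1(\IZ)$ and $\Lambda(\IZ^d)=\Lambda(\IZ)$ are correct and match the paper's.

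There is, however, one genuine gap. Your outline establishes two separate chains, $\Lambda_1(\IZ^d)=\Lambda_1(\IZ)$ on one side and $\Lambda^w(\IZ^d)=\Lambda_1^w(\IZ^d)=\Lambda_1^w(\IZ)=\Lambda^w(\IZ)$ on the other, but never links them: nowhere do you argue that $\Lambda_1(\IZ^d)=\Lambda_1^w(\IZ^d)$ for any $d$. Without this, the six-term equality in the theorem does not follow. The paper closes this gap with a one-line observation (its equation numbered as the second of the four displayed equalities in the proof): for any nonzero $a\in\IZ[\IZ^d]$ the operator $r_a^{(2)}\colon L^2(\IZ^d)\to L^2(\IZ^d)$ is automatically injective, by Lemma~\ref{lem:det(C[G]}\eqref{lem:det(C[G]:equivalent}. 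Hence the ``weak isomorphism'' restriction in the definition of $\Lambda_1^w(\IZ^d)$ is vacuous, and $\Lambda_1(\IZ^d)=\Lambda_1^w(\IZ^d)$ immediately. Once you insert this, your separate treatment of $\Lambda_1^w$ via ``specialization preserves injectivity'' becomes redundant: knowing $\Lambda_1=\Lambda_1^w$ for every $d$, a single approximation argument for $\Lambda_1$ (or $\Lambda_1^w$) suffices, which is exactly how the paper organizes it.
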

\begin{proof}
Theorem~\ref{the:Finitely_generated_free_abelian_groups} follows after we have shown 
the following equalities
\begin{eqnarray}
\Lambda(\IZ^d)  & =&  \Lambda(\IZ);
\label{the:Finitely_generated_free_abelian_groups:from_Zd_to_Z}
\\
\Lambda^w_1(\IZ^d)  & = & \Lambda_1(\IZ^d);
\label{the:Finitely_generated_free_abelian_groups:w,1_is_1}
\\
\Lambda^w(\IZ^d)  & =&  \Lambda_1^w(\IZ^d); 
\label{the:Finitely_generated_free_abelian_groups:w_is_w,1}
\\
\Lambda_1^w(\IZ^d)  & = & \Lambda_1^w(\IZ).
\label{the:Finitely_generated_free_abelian_groups:_from_d_to_1}
\end{eqnarray}

Equality~\eqref{the:Finitely_generated_free_abelian_groups:from_Zd_to_Z}
follows from Theorem~\ref{the:Approximating_Mahler_measures_for_matrices_over_IC[IZd]_by_matrices_over_IC[IZ]}.

Equation~\eqref{the:Finitely_generated_free_abelian_groups:w,1_is_1} 
follows from the conclusion of 
Lemma~\ref{lem:det(C[G]}~\eqref{lem:det(C[G]:equivalent}
that for $a \in \IZ[\IZ^d]$ with $a \not= 0$ the map $r_a^{(2)} \colon L^2(\IZ^d) \to L^2(\IZ^d)$ is injective. 

Equation~\eqref{the:Finitely_generated_free_abelian_groups:w_is_w,1} has already been proved in
Lemma~\ref{lem:lambda_upper_w(Z_upper_d)_is_lambda_upper_w_1(Z_upper_d)}.

Equation~\eqref{the:Finitely_generated_free_abelian_groups:_from_d_to_1} follows from
Lemma~\ref{lem:elementary_properties_of_Lambda(G)}~\eqref{lem:elementary_properties_of_Lambda(G):subgroups} and
Remark~\ref{rem:Approximating_Mahler_measures_for_polynomials_in_several_variables},
see in particular~\eqref{approximation_rewritten_for_(1,1)-matrices}.
\end{proof}

%%%%%%%%%%%%%%%%%%%%%%%%%%%%%%%%%%%%%%%%%%%%%%%%%%%%%%%%%%%%%%%%%%%%%%%%%%%%%%%%%
%%%%%%%%%%%%%%%%%%%%%%%%%%%% Section 11: Non-abelian free group  %%%%%%%%%%%%%%%%%%%%%%%%%%%%%%%%%%%%
%%%%%%%%%%%%%%%%%%%%%%%%%%%%%%%%%%%%%%%%%%%%%%%%%%%%%%%%%%%%%%%%%%%%%%%%%%%%%%%%%

\typeout{----------------------------   Section 11:  Non-abelian free groups ----------------------------------}

\section{Non abelian free groups}
\label{sec:Non_abelian_Free_groups}
We cannot prove anything for non-abelian free groups so far, but we want at least to list some good
properties and explain the main problem. Throughout this section $F$ denotes a non-abelian free group.

A ring $R$ is called \emph{semifir} if every finitely generated submodule of a free
$R$-module is free and for a free module any two basis have the same cardinality. For any field $K$ 
the group ring  $K[F]$ is a semifir~\cite[Corollary on page 68]{Cohn(1964)},
\cite{Dicks-Menal(1979)}. Moreover, $\IC G$ embeds into a skewfield $\cald(F)$ and the
inclusion is $\Sigma(\IC G \subseteq \calu(G))$-inverting as explained
in~\cite[Section~10.3.6 and Lemma~10.82 on page~408]{Lueck(2002)}.

A problem is that for a matrix $A \in M_{r,s}(\IC F)$ it can happen that $r_A
\colon \IC G^r \to \IC G^s$ is injective but $r_A^{(2)} \colon L^2(G)^r \to L^2(G)^s$ is
not injective. An example comes for the free group $F_r$ in $r \ge 2$ generators $s_1, s_2, \ldots , s_r$
from the $\IC[F_r]$-chain complex of the universal covering
of $\bigvee_{i=1}^r S^1$ whose first differential $r_A \colon \IC [F_r]^r \to  \IC [F_r]$ is given by the
matrix $A$ which is the transpose of  $(s_1 -1, s_2 -1, \ldots , s_r-1)$. The map $r_A$ is injective since the universal
covering is $1$-connected. The induced $F$-equivariant bounded operator 
$r_A^{(2)} \colon L^2(F_r)^r \to L^2(F_r)$ is not injective since Lemma~\ref{lem:weak_exactness} implies
\[
\dim_{\caln(F_r)}\bigl(\ker(r_A^{(2)})\bigr) \ge  \dim_{\caln(F_r)}\bigl(L^2(F_r)^r\bigr) - \dim_{\caln(F)}\bigl(L^2(F_r)\bigr) = r - 1 > 0.
\]
Actually we have $\dim_{\caln(F_r)}\bigl(\ker(r_A^{(2)})\bigr)  = r - 1$.

%%%%%%%%%%%%%%%%%%%%%%%%%%%%%%%%%%%%%%%%%%%%%%%%%%%%%%%%%%%%%%%%%%%%%%%%%%%%%%%%%
%%%%%%%%%%%%%%%%%%%%%%%%%%%% Section 12: Lehmer's problem for torsionfree groups  %%%%%%%%%%%%%%%%
%%%%%%%%%%%%%%%%%%%%%%%%%%%%%%%%%%%%%%%%%%%%%%%%%%%%%%%%%%%%%%%%%%%%%%%%%%%%%%%%%

 \typeout{-------------   Section 12:  Lehmer's problem for torsionfree groups  --------------------}

\section{Lehmer's problem for torsionfree groups}
\label{sec:Lehmer's_Question_for_torsionfree_groups}

Theorem~\ref{the:Finitely_generated_free_abelian_groups} leads to state the following
version of Lehmer's problem for torsionfree groups.

\begin{problem}[Lehmer's problem for torsionfree groups.]
For which torsionfree group $G$  does
\[
\Lambda^w(G) > 1
\]
hold?
\end{problem}

%%%%%%%%%%%%%%%%%%%%%%%%%%%%%%%%%%%%%%%%%%%%%%%%%%%%%%%%%%%%%%%%%%%%%%%%%%%%%%%%%
%%%%%%%%%%%%%%%%%%%%%%%%%%%% Section 13: 3-manifolds  %%%%%%%%%%%%%%%%%%%%%%%%%%%%%%%%%%%
%%%%%%%%%%%%%%%%%%%%%%%%%%%%%%%%%%%%%%%%%%%%%%%%%%%%%%%%%%%%%%%%%%%%%%%%%%%%%%%%%

 \typeout{---------------------------------   Section 13:  3-manifolds-----------------------------}

\section{$3$-manifolds}
\label{sec:3-manifolds}

Let $M$ be a closed hyperbolic $3$-manifold with fundamental group $\pi$.  Then the
$L^2$-torsion of its universal covering satisfies
\[
- \rho^{2}(\widetilde{M}) = \frac{1}{6\pi} \cdot \vol(M)
\]
where $\vol(M)$ is the volume, see for instance~\cite[Theorem~3.152 on page~187]{Lueck(2002)}. 
There is a natural number $n$ and a $(n,n)$-matrix $A$ with
entries in $\IZ \pi$ such that $r_A^{(2)} \colon L^2(\pi)^n \to L^2(\pi)^n$ is a weak
isomorphisms and its Fulgede-Kadison determinant satisfies
\[
- \rho^{2}(\widetilde{M}) = \ln (\det^{(2)}(r_A^{(2)}).
\]
This follows from the argument in the proof of~\cite[Theorem~2.4]{Lueck(1994a)}. We conclude
\[
{\det}_{\caln(\pi)}^{(2)}(r_A^{(2)} = \exp\left(\frac{1}{6\pi} \cdot \vol(M)\right).
\]
Hence we get 
\begin{equation}
\Lambda^w(\pi) \le \exp\left(\frac{1}{6\pi} \cdot \vol(M)\right).
\label{estimate_for_Lambda(pi)_for_pi_is_pi(closed_hyperbolic_3-manifold}
\end{equation}

\begin{example}[Weeks manifold]\label{exa:Week's_manifold}
  There is a closed hyperbolic $3$-manifold $W$, the so called Weeks manifold, which is
  the unique closed hyperbolic $3$-manifold with smallest volume,
  see~\cite[Corollary~1.3]{Gabai_Meyerhoff_Milley(2009)}.  Its volume is between 0,942 and
  0,943. Hence we get
  from~\eqref{estimate_for_Lambda(pi)_for_pi_is_pi(closed_hyperbolic_3-manifold} the
  estimate
  \[
    \Lambda^w(\pi) \le \exp\left(\frac{1}{6\pi} \cdot 0,943 \right) \le 1,06
  \]
  This implies $\Lambda^w(\pi) < M(L)$.
\end{example}

%%%%%%%%%%%%%%%%%%%%%%%%%%%%%%%%%%%%%%%%%%%%%%%%%%%%%%%%%%%%%%%%%%%%%%%%%%%%%%%%%
%%%%%%%%%%% Section 14: Appendix: Fuglede-Kadison determinants %%%%%%%%%%%%%%%%%%%%%%
%%%%%%%%%%%%%%%%%%%%%%%%%%%%%%%%%%%%%%%%%%%%%%%%%%%%%%%%%%%%%%%%%%%%%%%%%%%%%%%%%

 \typeout{---------------   Section 14:  Appendix:  Fuglede-Kadison determinants --------------------}

\section{Appendix: $L^2$-invariants}
\label{sec:L2-invariants}

In this appendix we give some basic definitions and properties about $L^2$-invariants.

%%%%%%%%%%%%%%%%%%%%%%%%%%%%%%%%%%%%%%%%%%%%%%%%%%%%%%%%%%%%%%%%%%%%%%%%%%%%%%%%%

\subsection{Group von Neumann algebras}
\label{subsec:Group_von_Neumann_algebras}

Denote by $L^2(G)$ the Hilbert space $L^2(G)$ consisting of formal sums 
$\sum_{g \in G} \lambda_g \cdot g$ for complex numbers $\lambda_g$ 
such that $\sum_{g \in G} |\lambda_g|^2 < \infty$.  
This is the same as the Hilbert space completion of the complex group ring
$\IC G$ with respect to the pre-Hilbert space structure for which $G$ is an orthonormal
basis.  Notice that left multiplication with elements in $G$ induces an isometric
$G$-action on $L^2(G)$. Given a Hilbert space $H$, denote by $\calb(H)$ the $C^*$-algebra
of bounded (linear) operators from $H$ to itself, where the norm is the operator norm and
the involution is given by taking adjoints.

\begin{definition}[Group von Neumann algebra]
\label{def:group_von_Neumann_algebra}
The \emph{group von Neumann algebra}
$\caln(G)$ of the group $G$ is defined
as the algebra of $G$-equivariant bounded operators
from $L^2(G)$ to $L^2(G)$
\begin{eqnarray*}
\caln(G)
& := &
\calb(L^2(G))^G.
\end{eqnarray*}
\end{definition}

In the sequel we will view the complex group ring $\IC G$ as a subring of $\caln(G)$ by
the embedding of $\IC$-algebras $\rho_r \colon \IC G \to \caln(G)$ which sends $g \in G$
to the $G$-equivariant operator $r_{g^{-1}} \colon L^2(G) \to L^2(G)$ given by right
multiplication with $g^{-1}$.

\begin{remark}[The general definition of von Neumann algebras]\label{rem:definition_of_a_von_Neumann_algebra}
  In general a \emph{von Neumann algebra}   $\cala$
  is a sub-$\ast$-algebra of $\calb(H)$ for some Hilbert space $H$, which is closed in the
  weak topology and contains $\id\colon H \to H$. Often in the literature the group von
  Neumann algebra $\caln(G)$ is defined as the closure in the weak topology of the complex
  group ring $\IC G$ considered as $\ast$-subalgebra of $\calb(L^2(G))$. This definition
  and Definition~\ref{def:group_von_Neumann_algebra} agree,
  see~\cite[Theorem~7.2 on page~434]{Kadison-Ringrose(1986)}.
\end{remark}

\begin{example}[The von Neumann algebra of a finite group]\label{exa:group_von_Neumann_algebra_of_a_finite_group}
  If $G$ is finite, then
  nothing happens, namely $\IC G = L^2(G) = \caln(G)$. 
\end{example}

\begin{example}[The von Neumann algebra of $\IZ^d$]\label{exa:group_von_neumann_algebra_of_Zd}
  In general there is no concrete model for $\caln(G)$.  However, for $G = \IZ^d$, there
  is the following illuminating model for the group von Neumann algebra $\caln(\IZ^d)$.
  Let $L^2(T^d)$  be the Hilbert space of equivalence classes of $L^2$-integrable complex-valued functions
  on the $n$-dimensional torus $T^d$, where two such functions are called equivalent if
  they differ only on a subset of measure zero.  Define the ring $L^{\infty}(T^d)$ by
  equivalence classes of essentially bounded measurable functions $f\colon T^d \to \IC$,
  where essentially bounded  means that there is a constant $C > 0$ such that the set 
  $\{x \in T^d\mid |f(x)| \ge  C\}$ has measure zero. An element $(k_1, \ldots , k_n)$ in $\IZ^d$ acts 
  isometrically on $L^2(T^d)$ by pointwise multiplication with the function $T^d \to \IC$, which maps
  $(z_1, z_2, \ldots, z_n)$ to $z_1^{k_1} \cdot \cdots \cdot z_n^{k_n}$.  The Fourier
  transform yields an isometric $\IZ^d$-equivariant isomorphism $L^2(\IZ^d)
  \xrightarrow{\cong} L^2(T^d)$.  Hence $\caln(\IZ^d) = \calb(L^2(T^d))^{\IZ^d}$.  We
  obtain an isomorphism (of $C^*$-algebras)
  \[
  L^{\infty}(T^d) \xrightarrow{\cong} \caln(\IZ^d)
  \]
  by sending $f \in L^{\infty}(T^d)$ to the $\IZ^d$-equivariant operator $M_f\colon
  L^2(T^d) \to L^2(T^d), \; g \mapsto g \cdot f,$ where $(g\cdot f)(x)$ is defined by
  $g(x)\cdot f(x)$.
\end{example}

%%%%%%%%%%%%%%%%%%%%%%%%%%%%%%%%%%%%%%%%%%%%%%%%%%%%%%%%%%%%%%%%%%%%%%%%%%%%%%%%%

\subsection{The von Neumann dimension}
\label{subsec:The_von_Neumann_dimension}

An important feature of the group von Neumann algebra is its trace.

\begin{definition}[Von Neumann trace]\label{def:trace_of_the_group_von_Neumann_algebra}
  The \emph{von Neumann trace} on $\caln(G)$ is defined by
\[
\tr_{\caln(G)}\colon \caln(G) \to \IC, \quad f \mapsto \langle f(e),e\rangle_{L^2(G)},
\]
where $e \in G \subseteq L^2(G)$ is the unit element.
\end{definition}

\begin{definition}[Finitely generated Hilbert module]\label{def:finitely_generated_Hilbert_module}
A finitely generated \emph{Hilbert $\caln(G)$-module}
$V$ is a Hilbert space $V$ together
with a linear isometric $G$-action  such that there exists
an isometric linear $G$-embedding
of $V$ into $L^2(G)^r$ for some natural number $r$.
A \emph{map of Hilbert $\caln(G)$-modules}
$f\colon V \to W$ is a bounded $G$-equivariant operator.
\end{definition}

\begin{definition}[Von Neumann dimension]\label{def:von_Neumann_dimension}
  Let $V $ be a finitely generated Hilbert $\caln(G)$-module.  Choose a matrix 
  $A =  (a_{i,j}) \in M_{r,r}(\caln(G))$ with $A^2 = A$ such that the image of the $G$-equivariant
  bounded operator $r_A^{(2)} \colon L^2(G)^r \to L^2(G)^r$ given by $A$ is isometrically
  $G$-isomorphic to $V$.  Define the \emph{von Neumann dimension} of $V$ by
  \begin{eqnarray*}
    \dim_{\caln(G)}(V)
    & := & \sum_{i=1}^r \tr_{\caln(G)}(a_{i,i})
    \quad \in [0,\infty).
  \end{eqnarray*}
\end{definition}

The von Neumann dimension $\dim_{\caln(G)}(V)$ depends only on the isomorphism class of
the Hilbert $\caln(G)$-module $V$ but not on the choice of $r$ and the matrix $A$.
The von Neumann dimension $\dim_{\caln(G)}$ is \emph{faithful}, i.e. 
$\dim_{\caln(G)}(V) = 0 \Leftrightarrow V= 0$ holds for any finitely generated Hilbert 
$\caln(G)$-module $V$. It is weakly exact in the following sense, 
see~\cite[Theorem~1.12 on page~21]{Lueck(2002)}.

\begin{lemma}\label{lem:weak_exactness}
  Let $0 \to V_0 \xrightarrow{i} V_1 \xrightarrow{p} V_2 \to 0$ be a sequence of finitely
  generated Hilbert $\caln(G)$-modules.  Suppose that it is weakly exact, i.e., $i$ is
  injective, the closure of $i$ is the kernel of $p$ and the image of $p$ is dense.  Then
  \[
  \dim_{\caln(G)}(V_1) = \dim_{\caln(G)}(V_0) + \dim_{\caln(G)}(V_0).
  \]
\end{lemma}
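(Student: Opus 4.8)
The plan is to deduce the asserted additivity (with the evident correction that the right-hand side should read $\dim_{\caln(G)}(V_0) + \dim_{\caln(G)}(V_2)$) from two ingredients: the additivity of $\dim_{\caln(G)}$ under orthogonal $G$-invariant direct sums, and its invariance under weak isomorphisms. Both are proved from the definition of $\dim_{\caln(G)}$ together with the fact, already recorded after Definition~\ref{def:von_Neumann_dimension}, that $\dim_{\caln(G)}$ depends only on the isometric isomorphism class of a finitely generated Hilbert $\caln(G)$-module.

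First I would establish the auxiliary claim: if $f \colon V \to W$ is a map of finitely generated Hilbert $\caln(G)$-modules that is a weak isomorphism, i.e.\ injective with dense image, then $\dim_{\caln(G)}(V) = \dim_{\caln(G)}(W)$. For this I use the polar decomposition $f = u \circ |f|$, where $|f| = (f^*f)^{1/2} \colon V \to V$ and $u \colon V \to W$ is the associated partial isometry. Since $f$ is $G$-equivariant, so is $f^*$, hence so is $f^*f$ and, by functional calculus, $|f|$; the partial isometry $u$, determined by $u(|f|x) = f(x)$ on $\overline{\im(|f|)}$ and by zero on its orthogonal complement, is then $G$-equivariant as well. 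Because $f$ is injective, $\ker(|f|) = \ker(f) = 0$, so the initial space $\overline{\im(|f|)}$ of $u$ is all of $V$; because the image of $f$ is dense, the final space $\overline{\im(f)}$ of $u$ is all of $W$. Thus $u \colon V \xrightarrow{\cong} W$ is a $G$-equivariant isometric isomorphism, and invariance of $\dim_{\caln(G)}$ under such isomorphisms gives $\dim_{\caln(G)}(V) = \dim_{\caln(G)}(W)$.

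Next I would run the main argument. Since $i$ is injective and the closure of its image equals $\ker(p)$, the corestriction $i \colon V_0 \to \ker(p)$ is a weak isomorphism, so the auxiliary claim yields $\dim_{\caln(G)}(V_0) = \dim_{\caln(G)}(\ker(p))$. As $p$ is $G$-equivariant, the orthogonal splitting $V_1 = \ker(p) \oplus \ker(p)^{\perp}$ is $G$-invariant; embedding $V_1$ isometrically and $G$-equivariantly into some $L^2(G)^r$, the orthogonal projection onto $V_1$ decomposes as the sum of the ($G$-equivariant, self-adjoint idempotent) projections onto $\ker(p)$ and onto $\ker(p)^{\perp}$, and taking $\tr_{\caln(G)}$ of the diagonal entries gives $\dim_{\caln(G)}(V_1) = \dim_{\caln(G)}(\ker(p)) + \dim_{\caln(G)}(\ker(p)^{\perp})$. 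Finally, the restriction $p|_{\ker(p)^{\perp}} \colon \ker(p)^{\perp} \to V_2$ is injective by construction and has dense image, since $\im(p)$ is dense and $p$ vanishes on $\ker(p)$; hence it is a weak isomorphism and $\dim_{\caln(G)}(\ker(p)^{\perp}) = \dim_{\caln(G)}(V_2)$. Combining the three displayed identities gives $\dim_{\caln(G)}(V_1) = \dim_{\caln(G)}(V_0) + \dim_{\caln(G)}(V_2)$.

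The main obstacle is the auxiliary claim: one must verify that the partial isometry produced by the polar decomposition is genuinely $G$-equivariant (which rests on the $G$-action being by unitaries, so that adjoints and the Borel functional calculus stay equivariant) and that the hypotheses of injectivity and dense image precisely upgrade $u$ to a full isometric isomorphism rather than a mere partial isometry. Once this invariance of $\dim_{\caln(G)}$ under weak isomorphisms is in hand, the orthogonal-decomposition bookkeeping is routine.
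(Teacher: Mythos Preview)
Your argument is correct (including the typo fix you noted). The paper does not actually supply a proof of this lemma but merely cites \cite[Theorem~1.12 on page~21]{Lueck(2002)}; the polar-decomposition-plus-orthogonal-splitting route you take is the standard one and is essentially what appears in that reference.
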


\begin{example}[Von Neumann dimension for finite groups]
\label{exa:Von_Neumann_dimension_for_finite_groups}
If $G$ is finite, then
$\dim_{\caln(G)}(V)$ is $\frac{1}{|G|}$-times the complex dimension
of the underlying complex vector space $V$.  
\end{example}

\begin{example}[Von Neumann dimension for $\IZ^d$] 
\label{exa:Von_Neumann_dimension_for_Zd}
Let $X \subset T^d$ be any measurable set and $\chi_X \in L^{\infty}(T^d)$
be its characteristic function. Denote by
$M_{\chi_X}\colon  L^2(T^d) \rightarrow  L^2(T^d)$
the $\IZ^d$-equivariant unitary projection given by
multiplication with $\chi_X$. Its image $V$ is a
Hilbert $\caln(\IZ^d)$-module with $\dim_{\caln(\IZ^d)}(V) = \vol(X)$.
\end{example}

%%%%%%%%%%%%%%%%%%%%%%%%%%%%%%%%%%%%%%%%%%%%%%%%%%%%%%%%%%%%%%%%%%%%%%%%%%%%%%%%%

\subsection{Weak isomorphisms}
\label{subsec:Weak_isomorphisms}

A bounded $G$-equivariant operator $f \colon L^2(G)^r \to L^2(G)^s$ is called a 
\emph{weak  isomorphism} if and only if it is injective and its image is dense.  If there exists a
weak isomorphism $L^2(G)^r \to L^2(G)^s$, then we must have $r = s$ by Lemma~\ref{lem:weak_exactness}. 
The following statements are equivalent for a bounded $G$-equivariant operator 
$f \colon L^2(G)^r \to L^2(G)^r$, see~\cite[Lemma~1.13 on page~23]{Lueck(2002)}:

\begin{enumerate}

\item $f$ is a weak isomorphism;

\item Its adjoint $f^*$ is a weak isomorphism;

\item $f$ is injective;

\item $f$ has dense image;

\item The von Neumann dimension of the closure of the image of $f$ is $r$.

\end{enumerate}

Consider a matrix $A \in M_{r,s}(\IC G)$. If $r_A^{(2)} \colon L^2(G)^r \to L^2(G)^s$ is
injective, then the $\IC G$-homomorphism $r_A \colon \IC G^r \to \IC G^s$ is
injective. The converse is not true in general but in the special case that $G$ is
amenable, this follows from~\cite[Theorem~6.24 on page~249 and Theorem~6.37 on page~259]{Lueck(2002)}.

%%%%%%%%%%%%%%%%%%%%%%%%%%%%%%%%%%%%%%%%%%%%%%%%%%%%%%%%%%%%%%%%%%%%%%%%%%%%%%%%%

\subsection{The Fuglede-Kadison determinant}
\label{subsec:The_Fuglede-Kadison_Determinant}

\begin{definition}[Spectral density function]
\label{def:spectral_density_function}
Let $f \colon V \to V$ be a morphisms of finitely generated Hilbert $\caln(G)$-modules.
Denote by $\{E_{\lambda}^{f^*f} \mid \lambda \in \IR\}$ 
the (right-continuous)
family of spectral projections of the positive operator $f^*f$.
Define the \emph{spectral density function of} $f$ by
\[
F_f \colon \IR \to [0,\infty) \quad \lambda \mapsto \dim_{\caln(G)}\bigl(\im(E_{\lambda^2}^{f^*f})\bigr).
\]
\end{definition}

The spectral density function is monotone non-decreasing and right-continuous.

\begin{example}[Spectral density function for finite groups]
\label{exa:spectral_density_function_for_finite_groups}
Let $G$ be finite and $f\colon U \to V$ be a map of finitely generated
Hilbert $\caln(G)$-modules, i.e., of finite-dimensional unitary $G$-representations.
Then $F(f)$ is the right-continuous step function
whose value at $\lambda$ is the sum of the complex dimensions of the eigenspaces
of $f^*f$ for eigenvalues $\mu \le \lambda^2$ divided by the order
of $G$, or, equivalently, the sum of the complex dimensions of the eigenspaces
of $|f|$ for eigenvalues $\mu \le \lambda$ divided by the order
of $G$.
\end{example}

\begin{example}[Spectral density function for $\IZ^d$]
\label{exa:spectral_density_function_for_Zd}
Let $G = \IZ^d$. In the sequel we use the notation and the identification
$\caln(\IZ^d) = L^{\infty}(T^d)$ of Example~\ref{exa:group_von_neumann_algebra_of_Zd}. 
For $f \in L^{\infty}(T^d)$
the spectral density function $F(M_f)$ of $M_f\colon L^2(T^d) \to L^2(T^d)$
sends $\lambda$ to the volume of the set
$\{z \in T^d \mid |f(z)| \le \lambda\}$.
\end{example}

\begin{definition}[Fuglede-Kadison determinant]\label{def:Fuglede-Kadison_determinant}
  Let $f\colon V \to V$ be a morphism of finitely generated Hilbert $\caln(G)$-modules.
   Let $F_f(\lambda)$ be the
  spectral density function of Definition~\ref{def:spectral_density_function} which is a
  monotone non-decreasing right-continuous function. Let $dF$ be the unique measure on the
  Borel $\sigma$-algebra on $\IR$ which satisfies $dF(]a,b]) = F(b)-F(a)$ for $a <  b$. 
  Then define the \emph{Fuglede-Kadison determinant} 
  \[
   {\det}_{\caln(G )}(f) \in [0,\infty)
   \]
to be the positive real number
\[
{\det}_{\caln(G )}(f) =  \exp\left(\int_{0+}^{\infty} \ln(\lambda) \; dF\right)
\]
if the Lebesgue integral $\int_{0+}^{\infty} \ln(\lambda) \; dF$ converges to a real number
and by $0$ otherwise.
\end{definition}

Notice that in the definition above we do not require that the source and domain of $f$
agree or that $f$ is injective or that $f$ is surjective. Our conventions imply that the Fulgede-Kadison
operator of the zero operator $0 \colon V \to V$ is $1$.

\begin{example}[Fuglede-Kadison determinant for finite groups] 
\label{exa:det_for_finite_groups} 
To illustrate this definition, we look at
the example where $G$ is finite. We essentially get
the classical determinant $\det_{\IC}$. Namely,
let $\lambda_1$, $\lambda_2$, $\ldots$, $\lambda_r$ be the non-zero
eigenvalues of $f^{\ast}f$ with multiplicity $\mu_i$. Then
one obtains, if $\overline{f^{\ast}f}$ is the automorphism
of the orthogonal complement of the kernel of $f^{\ast}f$ induced
by $f^{\ast}f$,
\begin{multline*}
{\det}_{\caln(G)}(f)  
= \exp\left(\sum_{i = 1}^r \frac{\mu_i}{|G|} \cdot \ln(\sqrt{\lambda_i})\right)
= \prod_{i=1}^r \lambda_i^{\frac{\mu_i}{2\cdot |G|}}
=  {\det}_{\IC}\bigl(\overline{f^{\ast}f}\big)^{\frac{1}{2\cdot |G|}}.
\end{multline*}
where ${\det}_{\IC}\bigl(\overline{f^{\ast}f})$ is put to be $1$ of $f$ is the zero operator
and hence $\overline{f^{\ast}f}$ is $\id \colon \{0\} \to \{0\}$.
If $f \colon \IC G^m \to \IC G^m$ is an automorphism, we get
\[
{\det}_{\caln(G)}(f)  = 
\left|{\det}_{\IC}(f)\right|^{\frac{1}{|G|}}. 
\]
\end{example}

\begin{example}[Fuglede-Kadison determinant for $(2,2)$-matrices over the trivial group] 
\label{exa:det_for_(2,2)-matrices_over_the-trivial_group}
Consider  $A \in M_{2,2}(\IC)$. Let $A^*$ be the conjugate transpose of $A$ and denote by $\tr_{\IZ}(AA^*)$ the trace of $AA^*$.
We conclude from Example~\ref{exa:det_for_finite_groups} 
for the trivial group $\{1\}$
\[{\det}_{\caln(\{1\})}\bigl(r_A \colon \IC^2 \to \IC^2\bigr)
=
\left\{\begin{array}{lcl}
|{\det}_{\IC}(A)| & \text{if} & \det_{\IC}(A) \not= 0;
\\
\sqrt{\tr_{\IZ}(AA^*)} & \text{if} & \det_{\IC}(A) = 0 \; \text{and}\; A \not= 0;
\\
1 & \text{if} & A = 0.
\end{array}\right.
\]
\end{example}

\begin{example}[Fuglede-Kadison determinant for $\caln(\IZ^d)$]
\label{exa:Fuglede-Kadison_determinant_for_G_is_Zd} 
Let $G = \IZ^d$. We use the  identification
$\caln(\IZ^d) = L^{\infty}(T^d)$ of Example~\ref{exa:group_von_neumann_algebra_of_Zd}.
For $f \in L^{\infty}(T^n)$ we conclude from
Example~\ref{exa:spectral_density_function_for_Zd}
\[
{\det}_{\caln(\IZ^d)}\left(M_f\colon  L^2(T^d) \to L^2(T^d)\right)
= 
\exp\left(\int_{T^d} \ln(|f(z)|) \cdot
\chi_{\{u \in S^1\mid f(u) \not= 0\}} \;dvol_z\right)
\]
using the convention  $\exp(-\infty) = 0$.
\end{example}

Let $i\colon H \rightarrow G$ be an injective group homomorphism.  Let $V$ be a finitely
generated Hilbert $\caln(H)$-module. There is an obvious pre-Hilbert structure on $\IC G
\otimes_{\IC H} V$ for which $G$ acts by isometries since $\IC G \otimes_{\IC H} V$ as a
complex vector space can be identified with $\bigoplus_{G/H} V$. Its Hilbert space
completion is a finitely generated Hilbert $\caln (G)$-module and denoted by $i_*M$. A morphism
of finitely generated Hilbert $\caln(H)$-modules $f\colon V \to W$ induces a map of finitely generated Hilbert
$\caln(G)$-modules $i_*f\colon i_*V \to i_*W$.

The following theorem can be found with proof in~\cite[Theorem~3.14 on~page~128
and Lemma~3.15~(4) on  page~129]{Lueck(2002)}.

\begin{theorem}[Fuglede-Kadison determinant]
\label{the:main_properties_of_det}\
\begin{enumerate}
\item
\label{the:main_properties_of_det:composition}
Let $f\colon  U \to V$ and $g\colon  V \to W$ be morphisms
of finitely generated Hilbert $\caln(G)$-modules such that $f$ has dense image and
$g$ is injective. Then
\[
{\det}_{\caln(G)}(g \circ f)  =  {\det}_{\caln(G)}(f) \cdot {\det}_{\caln(G)}(g);
\]

\item
\label{the:main_properties_of_det:additivity}
Let $f_1\colon  U_1 \to V_1$, $f_2\colon  U_2 \to V_2$ and
$f_3\colon  U_2 \to V_1$ be morphisms of finitely generated  Hilbert $\caln(G)$-modules
such that  $f_1$ has dense image and $f_2$ is injective. Then
\[
{\det}_{\caln(G)}\squarematrix{f_1}{f_3}{0}{f_2}
= 
{\det}_{\caln(G)}(f_1) \cdot {\det}_{\caln(G)}(f_2);
\]

\item\label{the:main_properties_of_det:det(f)_is_det(fast)}
Let $f\colon  U \to V$ be a morphism
of finitely generated Hilbert $\caln(G)$-modules. Then
\[
{\det}_{\caln(G)}(f) = {\det}_{\caln(G)}(f^*) = \sqrt{{\det}_{\caln(G)}(f^*f)} = \sqrt{{\det}_{\caln(G)}(ff^*)};
\]

\item
\label{the:main_properties_of_det:restriction}
Let $i \colon H \to  G$ be the inclusion of a subgroup of finite index $[G:H]$. Let 
$i^* f\colon i^* U \to i^* V$ be the morphism of 
finitely generated Hilbert $\caln(H)$-modules obtained from $f$
by restriction. Then
\begin{eqnarray*}
{\det}_{\caln(H)}(i^* f)
& = &
{\det}_{\caln(G)}(f)^{[G:H]};
\end{eqnarray*}

\item
\label{the:main_properties_of_det:induction}
Let $i\colon  H \to G$ be an injective group homomorphism and let
$f\colon  U \to V$ be a morphism of finitely generated Hilbert $\caln(H)$-modules.
Then 
\[
{\det}_{\caln(G)}(i_*f)  =  {\det}_{\caln(H)}(f).
\]

\end{enumerate}
\end{theorem}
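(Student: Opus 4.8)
The unifying tool throughout is the formula $\ln{\det}_{\caln(G)}(f)=\int_{0+}^{\infty}\ln(\lambda)\,dF_f$ of Definition~\ref{def:Fuglede-Kadison_determinant}, together with the fact that for a positive operator $A$ that is bounded below one has $\ln{\det}_{\caln(G)}(A)=\tr_{\caln(G)}(\ln A)$. I would prove assertion~\eqref{the:main_properties_of_det:det(f)_is_det(fast)} first, as it is essentially a change of variables. Setting $A=f^*f\ge 0$, the identity $E_{\lambda^2}^{A^*A}=E_{\lambda^2}^{A^2}=E_{\lambda}^{A}$ for $\lambda\ge 0$ gives $F_f(\mu)=F_A(\mu^2)$; substituting $\lambda=\mu^2$ in the defining integral turns it into $\tfrac12\int_{0+}^{\infty}\ln(\lambda)\,dF_A$, so ${\det}_{\caln(G)}(f)=\sqrt{{\det}_{\caln(G)}(f^*f)}$, and symmetrically ${\det}_{\caln(G)}(f^*)=\sqrt{{\det}_{\caln(G)}(ff^*)}$. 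That ${\det}_{\caln(G)}(f^*f)={\det}_{\caln(G)}(ff^*)$ follows from the polar decomposition $f=u|f|$: the partial isometry $u$ carries the spectral subspace of $f^*f$ for any interval $[\varepsilon,\lambda]\subseteq(0,\infty)$ isometrically and $G$-equivariantly onto that of $ff^*$, so the two spectral density functions coincide on $(0,\infty)$. Assembling these three identities proves~\eqref{the:main_properties_of_det:det(f)_is_det(fast)}.

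For the composition formula~\eqref{the:main_properties_of_det:composition} I would first treat the case where $f$ and $g$ are invertible in $\caln(G)$, equivalently weak isomorphisms bounded below. Here the classical Fuglede--Kadison argument applies: on the invertibles $\ln{\det}_{\caln(G)}$ equals $\Re\,\tr_{\caln(G)}(\ln(\cdot))$, and the multiplicativity ${\det}_{\caln(G)}(g\circ f)={\det}_{\caln(G)}(f)\cdot{\det}_{\caln(G)}(g)$ is obtained by integrating $\tfrac{d}{dt}\tr_{\caln(G)}(\ln(\cdot))$ along paths of invertibles and invoking $\tr_{\caln(G)}(ab)=\tr_{\caln(G)}(ba)$. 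The general case is reduced to this one by a spectral cut-off: one replaces the spectral values of $f^*f$ and $g^*g$ below a threshold $\varepsilon$ by $\varepsilon$, obtains invertible approximants whose determinants are computed by the invertible case, and lets $\varepsilon\to 0$. The hypotheses that $f$ has dense image and $g$ is injective ensure $\ker(g\circ f)=\ker(f)$ and $\overline{\im(g\circ f)}=\overline{\im(g)}$, so that the spectral density functions of the approximants are exactly additive in the limit and no spurious mass is created at $0$.

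Assertion~\eqref{the:main_properties_of_det:additivity} follows, when $f_1$ and $f_2$ are invertible, from the factorisation $\squarematrix{f_1}{f_3}{0}{f_2}=\squarematrix{f_1}{0}{0}{f_2}\cdot\squarematrix{\id}{f_1^{-1}f_3}{0}{\id}$: the unipotent factor has determinant $1$, since its logarithm is the nilpotent off-diagonal block of vanishing trace, the diagonal factor has determinant ${\det}_{\caln(G)}(f_1)\cdot{\det}_{\caln(G)}(f_2)$ by additivity of spectral density functions under direct sums, and~\eqref{the:main_properties_of_det:composition} glues the two; the general case is again handled by the cut-off. For~\eqref{the:main_properties_of_det:restriction} and~\eqref{the:main_properties_of_det:induction} everything is governed by how the trace transforms. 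When $[G:H]<\infty$, the $H$-equivariant isomorphism $i^*L^2(G)\cong L^2(H)^{[G:H]}$ gives $\tr_{\caln(H)}(i^*a)=[G:H]\cdot\tr_{\caln(G)}(a)$ and hence $F_{i^*f}=[G:H]\cdot F_f$, so the defining integral is scaled by $[G:H]$, yielding ${\det}_{\caln(H)}(i^*f)={\det}_{\caln(G)}(f)^{[G:H]}$. For an injective $i\colon H\to G$ the induction functor $\IC G\otimes_{\IC H}-$ preserves the value of the trace on the unit element, so $\tr_{\caln(G)}(i_*a)=\tr_{\caln(H)}(a)$, whence $F_{i_*f}=F_f$ and ${\det}_{\caln(G)}(i_*f)={\det}_{\caln(H)}(f)$.

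The main obstacle is the limiting step in~\eqref{the:main_properties_of_det:composition}. Because the Fuglede--Kadison determinant may vanish and the spectral measure can accumulate at $0$, justifying $\int_{0+}^{\infty}\ln(\lambda)\,dF_\varepsilon\to\int_{0+}^{\infty}\ln(\lambda)\,dF$ requires genuine care: the integrand $\ln$ is unbounded below near $0$, so one must control the spectral mass near $0$ uniformly in $\varepsilon$, and it is precisely the injectivity of $g$ and the dense image of $f$ that prevent this mass from contributing spuriously. This convergence, rather than any of the algebraic manipulations, is the technical heart of the theorem.
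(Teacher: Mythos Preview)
The paper does not prove this theorem at all: it merely records that a proof can be found in~\cite[Theorem~3.14 on page~128 and Lemma~3.15~(4) on page~129]{Lueck(2002)}. Your sketch is a faithful outline of the standard argument given there --- reducing to the positive case via~\eqref{the:main_properties_of_det:det(f)_is_det(fast)}, handling multiplicativity first for invertibles through $\tr_{\caln(G)}(\ln(\cdot))$ and then passing to the limit via spectral cut-off, and deriving the restriction and induction formulas from the corresponding behaviour of the von Neumann trace and hence of the spectral density function --- and you have correctly identified the analytic crux, namely controlling the spectral mass near~$0$ in the $\varepsilon\to 0$ limit.
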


%%%%%%%%%%%%%%%%%%%%%%%%%%%%%%%%%%%%%%%%%%%%%%%%%%%%%%%%%%%%%%%%%%%%%%%%%%%%%%%
%%%%%%%%%%%%%%%%%%%%%%%%%%%%%%%%%%%%%% Reference %%%%%%%%%%%%%%%%%%%%%%%%%%%%%%%%%
%%%%%%%%%%%%%%%%%%%%%%%%%%%%%%%%%%%%%%%%%%%%%%%%%%%%%%%%%%%%%%%%%%%%%%%%%%%%%%%

\typeout{-------------------------------------- References  ---------------------------------------}

% \bibliographystyle{abbrv}
% \bibliography{dbpub,dbpre}

% \version{09.03.2020}

\end{document}